\documentclass[11pt]{article}
\usepackage[margin=1.25in]{geometry}
\usepackage{genpreamble}

\title{Foliations forcing closed orbits}
\author{Ellis Buckminster and Audrey Rosevear}

\date{\today}

\begin{document}

\maketitle

\begin{abstract}
    We study foliations which force every transverse flow to have a closed orbit.  We fully characterize this property among $C^2$ finite depth foliations on atoroidal 3-manifolds. We also show that for many foliations, this property can be broken by a $C^0$-small modification of the foliation. We produce closed orbits using the theory of endperiodic maps, and we construct $C^1$ flows without closed orbits by inserting dynamical plugs transverse to foliations along curves of attracting holonomy.
\end{abstract}

\section{Introduction}
The topology and geometry of 2-dimensional foliations on 3-manifolds often enforce dynamical properties of transverse flows, and vice versa. For example, Sullivan showed a $C^2$ foliation is taut if and only if it has a transverse flow which is volume-preserving  (\cite{sullivan1979homological}, see also \cite{colin_taut_2019}). Zeghib showed that any flow transverse to a fibration on a closed 3-manifold is quasigeodesic \cite{zeghib1993feuilletages}. Zung  showed that a $C^2$ foliation admits a transverse Reeb flow if and only if it has no closed leaves and is not approximated by fibrations \cite{zung_reeb_2024}. We also mention the Gabai--Mosher conjecture, that taut foliations on hyperbolic 3-manifolds admit almost transverse pseudo-Anosov flows. 

Here, we tackle the following flavor of question: what properties of a foliation guarantee that \emph{all} transverse flows share some common dynamical property? In particular, we concern ourselves with one of the simplest dynamical properties, the existence of a closed orbit.

\begin{question}\label{q:mainq}
    What properties of a foliation force all transverse flows to have a closed orbit?
\end{question}

For a foliation by fibers on a hyperbolic 3-manifold, a theorem of Thurston \cite{thurston1988geometry} implies that any transverse flow has at least as many closed orbits as the pseudo-Anosov suspension flow, which has infinitely many. Therefore, such a foliation forces closed orbits in all transverse flows. It was known to Novikov \cite{Nov65} that if $\mc F$ has a Reeb component, then all transverse flows have closed orbits, see also Lemma \ref{lem:reebcomponentforces}. This question is also parallel to other existence questions for closed orbits of families of flows. Frankel proved quasigeodesic flows always have closed orbits \cite{frankel_closedorbits}. Taubes proved the Weinstein conjecture for Reeb flows in dimension three \cite{Taubes2007}, which states that all Reeb flows contain closed orbits. Indeed, the longstanding connections between contact geometry and foliation theory in dimension 3 (see \cite{colin_foliations_2020} for an introduction) were one source of inspiration for this paper.
\medskip

Gabai \cite{Gab83} showed that every hyperbolic 3-manifold with positive first Betti number admits a \emph{finite depth foliation}; hence, these provide an abundant supply of taut foliations. Finite depth foliations relax the notion of a foliation by fibers by allowing iterated spiraling of leaves (see Section \ref{sec:background} for precise definitions). We address Question \ref{q:mainq} for foliations in closed, atoroidal 3-manifolds, primarily in the case of finite depth foliations. Much of our motivation comes from the case of foliations by fibers in hyperbolic 3-manifolds, so our setting is a natural next step.

\subsection*{Results}
 Throughout this section, let $M$ be a closed, connected, oriented, atoroidal 3-manifold, and let $\mc F$ be a 2-dimensional foliation on $M$. We say $\mc F$ \defn{forces closed orbits} if all transverse flows have closed orbits. An \defn{aperiodic flow} is a flow without closed orbits. Our first result is a complete answer to Question \ref{q:mainq} in the case of finite depth foliations which are $C^2$-smoothable. 

\begin{mainthm}[Characterization Theorem]\label{thma:classification}
    Suppose $\mc F$ is a finite depth, $C^2$ foliation. Then $\mc F$ forces closed orbits if and only if 
    \begin{enumerate}
        \item $\mc F$ contains a locally top-depth region that is not a topological product, or
        \item $\mc F$ is $C^0$-approximated by such foliations.
    \end{enumerate}
    If $\mc F$ does not force closed orbits, it admits a $C^1$ transverse aperiodic flow.
\end{mainthm}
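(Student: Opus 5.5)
The proof has two directions. For the forward direction (conditions 1 or 2 force closed orbits), I will first treat a foliation containing a locally top-depth region $U$ that is not a topological product. Leaves in $U$ are all proper and of the same (maximal local) depth. The interior leaves of $U$ therefore spiral onto the boundary leaves, which are compact or lower depth.

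Any transverse flow $\phi$ induces a first-return map on a leaf $L$ of $U$. Following $\phi$ from $L$ until it returns to $L$, or is trapped near $\partial U$, gives an endperiodic map $f\colon L\to L$, in the sense of the endperiodic map theory recalled in Section \ref{sec:background}. The step I expect to be hardest is showing that $f$ has a periodic point. Since $U$ is not a product, $f$ cannot be isotopic to a translation, so $f$ is not "trivial." I would then use the Handel--Miller / Cantwell--Conlon--Fenley theory, for instance a Lefschetz count on the compact core, to get periodic points for every map in the isotopy class, and not only for the Handel--Miller representative. A periodic point of $f$ is exactly a closed orbit of $\phi$.

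I also need to rule out orbits escaping $U$ through its boundary. Near the boundary leaves, the spiraling gives holonomy that traps orbits in a collar, so the return map is defined on a cofinite region. Atoroidality excludes the degenerate torus cases. For condition 2, I will argue by openness. A flow transverse to $\mc F$ is also transverse to every $C^0$-close foliation, so if $\mc F$ is a $C^0$-limit of foliations $\mc F_n$ satisfying condition 1, any transverse flow to $\mc F$ is transverse to some $\mc F_n$ and hence has a closed orbit. This needs a mild check that transversality is a $C^0$-open condition for plane fields with a fixed flow. It is, since the flow direction is a fixed continuous line field and compactness gives a uniform angle.

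For the converse, suppose neither condition holds. Then every locally top-depth region is a product, and this property is stable under $C^0$-perturbation. I will first perturb $\mc F$ within this class so that the compact leaves have attracting holonomy along suitable curves. This uses the $C^2$ hypothesis together with Kopell's lemma and Sacksteder-type control of the holonomy. I then build a transverse flow in two stages.

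First, on each product region $L\times(0,1)$, take the vertical flow. Its orbits go from one boundary leaf to the other without closing.

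Second, I glue these pieces across the lower-depth leaves, inducting down the depth. Closed orbits can only arise from the holonomy on compact leaves and on spiraling junctions. Along each closed curve of attracting holonomy, I insert a Wilson/Kuperberg-type $C^1$ plug that is transverse to the foliation. The plug breaks the closed orbit while creating no new ones, since the plug's trapped set contains only non-closed orbits.

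The main obstacle in this direction is twofold: making the plugs transverse to $\mc F$, and showing that no closed orbits survive globally. For the global statement, I would show that every orbit either enters a product region, where it drifts monotonically in the transverse coordinate, or is captured by a plug. Attracting holonomy then prevents recurrence back to the same leaf at the same point. The resulting flow is $C^1$ and aperiodic, which proves the final sentence of the theorem.
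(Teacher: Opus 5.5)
\textbf{The forward direction is fine; the converse has a genuine gap.} Your forward direction is essentially the paper's. Openness of transversality handles condition 2 (Lemma \ref{lem:approxedimpliesforcing}). For a non-product locally top-depth region, the paper likewise takes a first-return map on a carrying sutured manifold and applies Landry--Minsky--Taylor: spun pseudo-Anosov representatives minimize periodic points, and have none only when they are translations. You would still need to treat Reeb components separately, since planar leaves have one end and the return map is not endperiodic; the paper uses Brouwer there. You would also need to reduce higher-depth leaves, which have infinitely many ends and noncompact boundary in the octopus decomposition, to an honest atoroidal endperiodic map (by inserting strips of genus, doubling, and checking atoroidality).

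The converse fails at two points. First, you may not perturb $\mathcal{F}$. The theorem asks for an aperiodic flow transverse to $\mathcal{F}$ itself, and a plug made transverse to a perturbation $\mathcal{F}'$, using the holonomy of $\mathcal{F}'$, need not be transverse to $\mathcal{F}$. Perturbing the foliation is what the Instability Theorem does, which is a different statement. Second, and more fundamentally, nothing in your outline converts the failure of condition 2 into two-sided attracting holonomy of $\mathcal{F}$. Kopell's lemma is not the mechanism, and knowing that every locally top-depth region is a product is not enough. For example, blow up a fiber $\Sigma$ of a hyperbolic fibration with monodromy $\psi$, and shear along a product annulus whose dual class $j\in H^1(\Sigma)$ is $\psi^*$-invariant. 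The only locally top-depth region is then a product, yet the foliation is $C^0$-approximated by fibrations and so forces closed orbits.

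The missing idea is the juncture dichotomy. Pass to base regions, i.e. the locally top-depth regions after collapsing product top-depth regions to trivial products. At the base-depth leaves $L$, which need not be compact, compare $j^+(L)$ with $-j^-(L)$, using limit junctures when there are infinitely many top-depth regions.
\begin{itemize}
\item If the two classes agree throughout a base region, the $C^2$ hypothesis (compactly supported junctures, by Cantwell--Conlon) bounds the relative depth by one. Despinning then approximates $\mathcal{F}$ by fibrations there, so condition 2 holds.
\item Otherwise $\{a\in H_1(L,\R) : j^+(a)>0,\ j^-(a)>0\}$ is a nonempty open cone. A primitive class in this cone is represented by a nonseparating curve of two-sided attracting holonomy of $\mathcal{F}$, or by an annulus of attracting holonomy across a trivially foliated product.
\end{itemize}

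\textbf{The flow construction also fails as written.}

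\emph{Choice of plug.} Wilson's plug contains two periodic orbits, so inserting it creates closed orbits. K.~Kuperberg's plug is not known to be transverse to any foliation, and G.~Kuperberg's volume-preserving plug cannot trap an open set. The paper instead uses Schweitzer's $C^1$ Denjoy plug. It is made transverse to $\mathcal{F}$ on a punctured torus spanned by the attracting curve $\gamma_1$ and a curve $\gamma_2$ meeting it once, which is why nonseparating curves are needed. Concretely, the train tracks $\gamma_1\cup c_n$ with $c_n\simeq\gamma_2\gamma_1^n$ carry the Denjoy minimal set. They lift transversally to $\mathcal{F}$ because $\gamma_2\gamma_1^n$ has attracting holonomy for large $n$. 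Blowing up a second orbit then gives a trapped set with interior (Lemma \ref{lem:pluginsertion}).

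\emph{Global aperiodicity.} Your global argument does not exclude closed orbits. An orbit drifting monotonically through a product region can still close up. Moreover, the base regions are not products and carry suspension-like flows with many closed orbits, while each plug traps only a small set. The paper gets the needed sparsity in four steps:
\begin{enumerate}
\item Start from a flow that is a dynamical product on every product region, so every closed orbit meets a chosen leaf $L_i$ of some base region.
\item $C^0$-perturb this flow to a Smale flow, so the closure of the periodic orbits meets each $L_i$ in a closed, totally disconnected set.
\item Enclose the part of that set lying in a suitable compact subsurface of each $L_i$ in a disk.
\item Trap each disk with one modified Schweitzer plug.
\end{enumerate}
Property (P3) then guarantees that no new closed orbits are created.
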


Here, a \emph{locally top-depth region} is a connected set which is a union of top-depth leaves in its closure; see Definition \ref{def:ltdregion}. We reinterpret the second condition in terms of what we call having \emph{connecting junctures}. Intuitively, $\mc F$ has connecting junctures if the spiraling of depth $k$ leaves on either side of each depth $k-1$ leaf ``lines up''. See Section \ref{sec:junctures} for precise definitions.

\medskip

Given a foliation, one can \emph{shear} it to produce a new foliation by making a modification in a small neighborhood of an annulus or infinite strip; see Section \ref{sec:background finite depth} for details. The following result says that given any finite depth foliation $\mc F$ on an atoroidal 3-manifold, one can always obtain a foliation that forces closed orbits by iteratively ``unshearing'' $\mc F$.

\begin{maincor}\label{cor:blowdown}
    Suppose $\mc F$ is a finite depth foliation. If $\mc F$ does not force closed orbits, then it is obtained by iterated shearing of a foliation that does.
\end{maincor}

We also obtain the following corollary, which highlights that the primary mechanism for forcing closed orbits is trivial holonomy. The same result applies to foliations obtained by taking ``irrational perturbations'' of non-product locally top-depth regions to obtain regions without holonomy; see \cite[Chapter 9]{candel_foliations_1999} for more details on these foliations.

\begin{maincor}\label{cor:no holonomy}
    If $\mc F$ is $C^2$ and has no holonomy, then $\mc F$ forces closed orbits.
\end{maincor}

Our second main theorem shows that in the general setting, forcing closed orbits is often unstable under small perturbations of the tangent plane field. In particular, forcing closed orbits is only a property of the ambient 3-manifold when the manifold admits no taut foliation. Theorem \ref{thma:classification} implies that if $\mc F$ is depth $k$, then the approximating foliations can be chosen to be depth $k+1$.

\begin{mainthm}[Instability Theorem]\label{thma:perturb}
    If every leaf of $\mc F$ has genus, then $\mc F$ can be $C^0$-approximated by foliations which do not force closed orbits.
\end{mainthm}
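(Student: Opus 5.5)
The approach I would take is the one sketched in the abstract: first $C^0$-perturb $\mc F$ so that it has a closed curve of attracting holonomy, and then insert a dynamical plug transverse to the new foliation along that curve. The plug is there to kill closed orbits. Since $\mc F$ is taut, or at least we can reduce to that case, it admits a transverse flow. The goal is to modify that flow inside neighborhoods of finitely many transverse tori or annuli so that every closed orbit is destroyed, while keeping transversality to a foliation $C^0$-close to $\mc F$.

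\textbf{Step 1: creating attracting holonomy.} Every leaf has genus, so every leaf $L$ contains a non-separating simple closed curve $\gamma$ with a dual curve $\delta$ meeting it once. Choose a transversal $\tau$ at a point of $\gamma$. I would perturb $\mc F$ in a thin neighborhood of an annulus transverse to the leaves along $\delta$, via a small shear or ``twist'' of the plane field there. This makes the holonomy of $\mc F'$ along $\gamma$ strictly contracting on one side, or on both sides, of $L$. The change can be made $C^0$-small, because we only tilt the plane field slightly inside a small neighborhood. This is the usual trick of creating holonomy by a perturbation supported near a curve dual to $\gamma$. Doing this on finitely many leaves that meet a finite collection of flow boxes covering $M$ gives a foliation $\mc F'$ with many curves of attracting holonomy.

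\textbf{Step 2: inserting plugs to kill periodic orbits.} Take a flow $\phi$ transverse to $\mc F'$, for instance one built from a fixed transverse line field. Closed orbits of $\phi$ cannot be removed just by perturbing, so I would use a Wilson/Kuperberg-style plug adapted to transversality. Near a curve $\gamma$ of attracting holonomy, the foliation looks like a neighborhood $\gamma\times D^2$ in which leaves spiral toward $L$. In that region the transverse direction has room to be redirected along $\gamma$. Because the holonomy is attracting, one can put a flow inside the neighborhood that is transverse to $\mc F'$, runs ``around'' $\gamma$ slowly, and still exits. This acts like a semi-plug: an orbit entering it is delayed and shifted along $\gamma$, and the plug itself has no closed orbits because the holonomy contraction forces the transverse coordinate to drift monotonically. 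The plugs are then placed so that every orbit of $\phi$ passes through one of them, which is possible by a compactness/flow-box argument. After that, a Schweitzer/Kuperberg argument breaks the closed orbits and creates no new ones. To get $C^1$ I would adapt Schweitzer's $C^1$ plug with Denjoy minimal set rather than use Kuperberg's smooth plug, since the plug only needs to be transverse to $\mc F'$.

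\textbf{Main obstacle.} The main difficulty is Step 2: making the plug genuinely transverse to the foliation, rather than just to a local product structure, while also guaranteeing that no closed orbits survive. Orbits trapped near the attracting curve must not close up. I would handle this by building the plug in a local model $\gamma\times[-1,1]\times[-1,1]$ where the foliation is the suspension of a contraction. There, transversality amounts to the flow having a positive component in the $[-1,1]$ leaf-normal direction, and attracting holonomy is exactly what lets an orbit wind around $\gamma$ while still increasing in that coordinate.
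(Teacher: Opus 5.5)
Your overall strategy is the paper's: use genus to get a dual pair $\gamma,\delta$, shear along $\delta$ to give $\gamma$ attracting holonomy, then insert $C^1$ Schweitzer-type plugs transverse to the new foliation. But Step 2, which carries the real content, has a genuine gap: the plug is never actually made transverse, and the model you propose would not work. A solid-torus neighbourhood $\gamma\times[-1,1]\times[-1,1]$ is the wrong setting. Your reason for the absence of closed orbits (``the holonomy contraction forces the transverse coordinate to drift monotonically'') is false. Two-sided attracting holonomy along $\gamma$ is exactly what makes a push-off of $\gamma$ a closed positive transversal (compare Lemma~\ref{lem:holonomy implies not fibr}). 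So a transverse flow winding slowly around $\gamma$ can perfectly well have closed orbits; Wilson's plug is of this kind. Also, orbits that are ``delayed and shifted'' are neither trapped (P2) nor returned to the point above their entry (P3).

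What excludes periodic orbits in Schweitzer's plug is that its only recurrence is a Denjoy minimal set. That set lives over a punctured torus, so $\delta$ is needed in Step 2, not only in Step 1. In Lemma~\ref{lem:pluginsertion}, $\mathcal F$ is pulled back to $S\times[0,1]$, where $S=T^2\setminus D$ is a neighbourhood of $\gamma\cup\delta$ in the leaf and the interval direction is the flow. The set $K_\alpha$ is carried by a train track $\tau_n=\gamma\cup c_n$ with $c_n\simeq\delta\gamma^n$. One takes $n$ large enough that $c_n$ also has attracting holonomy on a definite interval, so $\tau_n$, and hence $K_\alpha\times\{1/2\}$, lifts (after tilting) transverse to the foliation. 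This is done both above and below $L$, which is where \emph{two-sided} attracting holonomy is needed, and gluing the two semiplugs is what yields (P3).

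The covering argument also fails as stated. Having every orbit pass through some plug does not break closed orbits. A plug only traps orbits entering its trapped set; every other orbit exits directly above where it entered, so a closed orbit can cross a plug and survive. A compactness argument needs trapped sets with nonempty interior, which the standard Schweitzer plug does not provide. The paper first fixes $\phi$ and picks finitely many closed disks $C_j$ in distinct leaves $L_j$ meeting every closed orbit. It then uses the \emph{modified} Schweitzer plug, in which a second Denjoy orbit is blown up so the trapped set contains an open disk. An isotopy supported in product-foliated collars of $\partial_\pm P$, and hence preserving transversality, funnels all orbits through $C_j$ into that disk.

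Step 1 is also imprecise. ``Tilting the plane field'' need not preserve integrability, and a shear along a transverse annulus over $\delta$ only makes sense if that annulus is foliated by circles; otherwise the regluing must commute with the holonomy of $\delta$. The paper avoids this by Denjoy-blowing up each $L_j$ into a trivially foliated product $L_j\times[-1,1]$ and shearing along $\delta\times[-1,0]$ and $\delta\times[0,1]$ inside it. Kazez--Roberts then makes the result $C^0$-close to $\mathcal F$, so the original $\phi$ stays transverse.
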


\medskip

In previous work (see e.g. \cite[Theorem 14.20]{farb2011primer}, \cite{landry2023endperiodic}, \cite{buckminster2025periodic}), one guarantees periodic points of surface homeomorphisms by using the Lefschetz fixed point theorem \cite{lefschetz1926intersections} applied to the compactified universal cover of the surface. As a corollary of our work, the only guaranteed closed orbits of flows transverse to finite depth foliations arise as periodic points of induced monodromies (or induced `approximate' monodromies). These periodic points are all guaranteed by the Lefschetz fixed point theorem applied to the universal cover of a leaf. Thus, if a finite depth foliation forces closed orbits in all transverse flows, a closed orbit can always be produced via surface dynamics.

\subsection*{Historical context and motivation}   
\subsubsection*{Fibered hyperbolic 3-manifolds}
A key motivating example for this work is the picture of fibered hyperbolic 3-manifolds developed by Thurston and Fried. Given a closed, hyperbolic 3-manifold $M$ fibering over the circle, the different ways $M$ fibers are organized by fibered cones of the Thurston norm ball in $H_2(M,\R)$ \cite{thurston1986norm}. Each fibered cone corresponds to a pseudo-Anosov suspension flow $\phi$, in the sense that the suspension flow of any fibration in that cone is orbit equivalent to $\phi$ \cite{fried_geometry_1982}. 

Each fibration yields a foliation $\mc F$, which is the foliation by fibers. The flow $\phi$ for the cone containing  $\mc F$ is transverse to $\mc F$, and this transverse flow yields geometric information about $\mc F$, such as the existence of a Cannon--Thurston map \cite{CannonThurston, fenley2012ideal}. Furthermore, any flow $\phi$ transverse to $\mc F$ is a suspension flow, and thereby induces a homeomorphism from any leaf of $\mc F$ to itself. Therefore, the Nielsen--Thurston classification can be applied to obtain dynamical information about $\phi$, such as the existence of an abundance of closed orbits (see e.g. \cite{fathi_thurstons_2013}).

\subsubsection*{Big mapping class groups} This work also connects to the very active field of infinite-type surfaces and big mapping class groups. Other than the compact leaves, the leaves in our finite depth foliations are infinite-type surfaces. In top-depth components, a transverse flow induces a first-return map on these leaves. These first-return maps are similar to endperiodic maps, though leaves of depth 2 or higher will typically have infinitely many ends.  In the depth one case, we get true endperiodic first-return maps. In the case where all depth one regions are topological products and the junctures do not connect, the first return maps of the aperiodic flows we produce are homotopic to translations and have compact invariant sets with no periodic points; see Section \ref{subsec:firstreturn} for a description of these maps.

\subsubsection*{Aperiodic flows and plugs} A third source of motivation for this project is the study of the wide-ranging dynamical behaviors exhibited by flows on 3-manifolds, and in particular the existence and constructions of aperiodic flows. The \emph{Seifert Conjecture} \cite{seifert1950closed} stated that all nonsingular vector fields on $S^3$ contain closed orbits; it is now known that every closed 3-manifold admits an aperiodic flow. Wilson \cite{Wilson1966OnTM} produced a $C^\infty$ flow with finitely many closed orbits, and aperiodic flows of regularities $C^1$, $C^2$, and $C^\infty$  were produced by Schweitzer \cite{Schweitzer}, Harrison \cite{HARRISON1988249}, and K. Kuperberg \cite{kkuperberg}, respectively, along with a $C^1$ volume-preserving construction due to G. Kuperberg \cite{kuperberg_volume-preserving_1995}. All of these results proceed via the construction of \emph{plugs}. These are vector fields on 3-manifolds with boundary that, when inserted into an initial 3-manifold with a vector field, can break closed orbits of the original flow without creating any new closed orbits. See Section \ref{sec:other plugs} for a discussion on using higher-regularity and volume-preserving plugs in this setting.

\subsection*{Proof techniques}
The proof of the positive direction of Theorem \ref{thma:classification} in the depth one case goes as follows. When $\mc F$ contains a depth one region that is not a topological product, the endperiodic maps machinery of Handel-Miller (written down by Cantwell--Conlon--Fenley \cite{cantwell_endperiodic_2021}) and Landry--Minsky--Taylor \cite{landry2023endperiodic} produces a closed orbit of any transverse flow. When $\mc F$ is made entirely of topological products with connecting junctures, it is approximable by fibrations over the circle, and any transverse flow is therefore a suspension flow. The higher depth case is similar. The only place we use the $C^2$ hypothesis is to characterize foliations approximable by foliations with non-product top-depth regions. Our characterization of foliations approximated by fibrations is in the spirit of the constructions of Thurston \cite{thurston1986norm} and Landry--Minsky--Taylor \cite{landry2023endperiodic}, and partially extends characterizations by Andrade--Firmo \cite{andrade_foliations_2003} of foliations which are limits of \textit{isotopic} fibrations.

Our work is primarily concerned with the negative direction, where the junctures do not connect. In this case, we show that there are many leaves with curves of two-sided attracting holonomy, and produce an aperiodic transverse flow by using a modified version of the \emph{Schweitzer plug} \cite{Schweitzer}. A key technical result of the paper is Lemma \ref{lem:pluginsertion}, which shows that plugs can be inserted transverse to the foliation along nonseparating curves of two-sided attracting holonomy. One novelty of our work is a strategy to produce aperiodic flows with a small number of plugs (in some cases, just one), starting with a flow whose chain recurrent set is 1-dimensional.

Theorem \ref{thma:perturb} is proved by inserting lots of very thin topological product regions containing Schweitzer plugs, similar to the negative direction of Theorem \ref{thma:classification}.

\subsection*{Organization} 
In Section \ref{sec:background}, we review background from the theory of foliations. In Section \ref{sec:forcingclosedorbits}, we discuss techniques to force closed orbits using endperiodic maps and prove Corollary \ref{cor:no holonomy}. In Section \ref{sec:plugs}, we demonstrate how to break closed orbits of a flow by inserting plugs and give conditions on when plugs can be inserted transverse to foliations. This allows us to prove Theorem \ref{thma:perturb}. In Section \ref{sec:junctures}, we introduce the connecting (limit) junctures property and characterize $C^2$ finite depth foliations with this property. This allows us to show that for a given foliation, either the positive techniques from Section \ref{sec:forcingclosedorbits} or the negative techniques from Section \ref{sec:plugs} apply. In Section \ref{sec:constructingflows}, we prove Theorem \ref{thma:classification} and Corollary \ref{cor:blowdown}.

\subsection*{Acknowledgments} We would like to thank our advisors Sam Taylor, Ian Agol, and Michael Hutchings, as well as Priyam Patel for helpful discussions over the course of this project. We would especially like to thank Rafael Potrie for pointing us in the direction of Smale flows and helping us with the relevant literature. We would also like to thank Siddhi Krishna, Rafael Potrie, and Sam Taylor for comments on an earlier draft.

The completion of this paper was supported by the National Science Foundation under Grant No. DMS–2424139, while the authors were in residence at the Simons Laufer Mathematical Sciences Institute in Berkeley, California, for the Spring 2026 semester program \emph{Topological and Geometric Structures in Low Dimensions}. The first author was partially supported by a Simons Dissertation Fellowship [SFI-MPS-SDF-00014998, EB]. The second author was partially funded by a grant from the Simons Foundation [376200, IA], and by the Berkeley Connect Fellowship.

\medskip\noindent\textbf{Declaration of AI use:} Generative AI was used for spell checking and nothing else.

\section{Background}\label{sec:background}

\subsection{Foliations and Flows}
By a (2-dimensional) \defn{foliation} $\mc F$ on a 3-manifold $M$ we mean a partition of $M$ into 2-dimensional injectively immersed submanifolds, called \defn{leaves}, and a collection of \defn{foliation charts} covering $M$, that is, open sets $U$ and homeomorphisms $U \to \R^2 \times \R$ taking leaves to parallel copies of $\R^2$. 

To a foliation $\mc F$ we associate the tangent plane field $T\mc F$. We say that a sequence $\mc F_i$ of foliations \defn{`$C^0$-approximates $\mc F$'} if the plane fields $T \mc F_i$ approach $T\mc F$ in the $C^0$ topology. We always assume our foliations are orientable and coorientable.

Calegari showed that any foliation on a 3-manifold can be isotoped so that the leaves are $C^\infty$-injectively immersed \cite{calegari_leafwise_2001}. For convenience, we will assume all foliations are in this form. We assume that the foliation chart homeomorphisms are smooth in the first coordinate. By a \defn{`$C^k$ foliation'} we mean a foliation such that the first coordinate of each foliation chart is $C^\infty$ and the second coordinate is $C^k$.  By a \defn{`$C^k$-smoothable foliation'} we mean a foliation $\mc F$ such that there exists a homeomorphism of $M$ taking $\mc F$ to a $C^k$ foliation.

\medskip

By a \defn{flow} $\phi = \phi^t$ on $M$ we mean a 1-parameter group of homeomorphisms $\phi^t(x): \R \times M \to M$. All flows in this paper will be of sufficient regularity to be defined by a unique vector field $V$. When we say a $C^k$ flow we mean that it is generated by a $C^k$ vector field $V$. We always assume our flows have no stationary points, equivalently, we assume $V$ is nonvanishing. 

When we say \defn{`$\phi$ is transverse to a foliation $\mc F$'} we mean that $V$ is transverse to $T \mc F$. We always assume that $\phi$ is positively transverse to $\mc F$ with respect to the coorientation of $\mc F$. When $\mc F$ is coorientable, such a flow always exists.

\subsection{Saturated sets and sutured manifolds}

A \defn{saturated set} of a foliation $\mc F$ is a set which is a union of leaves of the foliation. To an open, saturated set $U$ we associate the \defn{metric completion} $\wh U$, which is the completion of $U$ under the induced path metric coming from any metric on the ambient 3-manifold. This is a (typically noncompact) manifold with boundary. The completion $\wh U$ inherits a foliation on the interior from $\mc F$, which extends uniquely to a foliation on $\wh U$ by taking the boundary surfaces as leaves. This foliation has the same regularity as $\mc F$. A coorientation on the foliation induces a decomposition $\partial \wh U = \partial_- \wh U \sqcup \partial_+ \wh U$, where a positively transverse vector field points inward along $\partial_- \wh U$ and outward along $\partial_+ \wh U$.

There is a canonical immersion $\wh U \to M$ which takes the boundary leaves of $\wh U$ to leaves of $\mc F$. This immersion is an embedding on the interior, and is either an embedding on all of $\wh U$ or identifies some number of boundary leaves of $\wh U$. We denote by $\mc F_{\wh U}$ the pullback of $\mc F$ along this immersion, which is precisely the induced foliation. If $\phi$ is a flow transverse to $\mc F$, we denote by $\phi_{\wh U}$ the partially defined pullback flow under this immersion. See \cite{candel_foliations_1999}, Section 5.2 for details.

We say that an open, saturated set $U$ is a \defn{topological product region}, or just a \defn{product region}, if it is homeomorphic to $L \times (0, 1)$, where $L$ is a boundary leaf of $\wh U$. For a product region $U$, we say that it is a \defn{trivially foliated product} if $\mc F$ restricts to the foliation by parallel copies of $L$. If $\phi$ is a flow transverse to $\mc F$, we say a product region $U$ is a \defn{dynamical product} if the flowlines of $\phi$ restrict to the interval foliation by parallel copies of $(0, 1)$. Equivalently, $\phi$ has no invariant sets contained in $U$. Every foliated product is a dynamical product for any transverse flow, but the converse is not true. Note that this differs from the terminology Fenley--Mosher use in \cite{fenley1995quasigeodesic}.

\medskip

A \defn{sutured manifold} $N$ is a compact manifold with corners and a decomposition of its boundary and corners into compact surfaces $\partial_+N,$ $\partial_- N,$ and $ \partial_\pitchfork N$, such that:
\begin{enumerate}
    \item $\partial_+ N$ and $\partial_- N$ are disjoint and meet the corners of $N$ only at their boundaries
    \item $\partial_\pitchfork N$ is a union of annuli with boundary on the corners of $N$. Each annulus has one boundary component on $\partial_+ N$ and one on $\partial_- N$.
\end{enumerate}

Sutured manifolds were defined by Gabai \cite{Gab83}, though our definition is slightly different. In particular, we use manifolds with corners, and we do not allow toral sutures.

Let $i: N \to M$ be an immersion of a sutured manifold which is injective on the interior. We say that $N$ (with the immersion understood) \defn{carries} a foliation $\mc F$ if $\partial_- N$ and $\partial_+ N$ are contained in leaves of $\mc F$, with a positively transverse vector field pointing into $N$ along $\partial_- N$ and out of $N$ along $\partial_+ N$, and if $\partial_\pitchfork N$ is transverse to $\mc F$. We say that $N$ is \defn{compatible} with a flow $\phi$ transverse to $\mc F$ if the flowlines of $\phi$ are tangent to $\partial_\pitchfork N$ and trace out a foliation of each annular component by interval fibers. Our orientation convention implies that these interval flowlines start at the intersection of $\partial_\pitchfork N$ and $\partial_- N$ and end at the intersection of $\partial_\pitchfork N$ and $\partial_+ N$. Given a carrying sutured manifold, we denote by $\mc F_N$ the pullback of $\mc F$ under this immersion. Given a compatible transverse flow $\phi$, we denote by $\phi_N$ the pullback partially defined flow on $N$.

We will use the following elementary but foundational lemma, the Octopus Decomposition lemma. See \cite{candel_foliations_1999}, Section 5.2 for a proof. We have rephrased the lemma in the language of sutured manifolds.

\begin{lemma}[Octopus Decomposition]\label{lem:octopus}
    Let $U$ be an open saturated set of $\mc F$ on a compact manifold $M$. Then the completion $\wh U$ admits an \defn{octopus decomposition} $\wh U = N \cup A_1 \cup ... \cup A_n$, where $N$ is a sutured manifold carrying $\mc F_{\wh U}$ such that $\partial_\pm N \subset \partial_\pm \wh U$, and each $A_i$ is a foliated interval bundle over its noncompact boundary leaves $L_i^\pm$, which are contained in $\partial \wh U$ and intersect $N$ along $\partial (\partial_\pm N)$. Moreover, $A_i$ and $N$ intersect along a component of $\partial_\pitchfork N$.

    If $\phi$ is a flow transverse to $\mc F$, the octopus decomposition can be chosen so that $N$ is compatible with $\phi_{\wh U}$ and $\phi$ defines a dynamical product on each $A_i$. We call such an octopus decomposition `\defn{compatible with $\phi$}'.
\end{lemma}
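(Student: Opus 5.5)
The plan is to prove the first statement as in Candel--Conlon (Section 5.2) and then refine the choices for the flow version. Fix a Riemannian metric on $M$ and a transverse $1$-dimensional foliation $\mc L$. For the flow version we take $\mc L$ to be the flowlines of $\phi$, pulled back to $\wh U$. The first step is to understand the geometry of $\wh U$ near infinity. Since $M$ is compact, for small $\epsilon>0$ the set of points of $\wh U$ whose $\mc L$-segment to the boundary has length less than $\epsilon$ (on both sides) is well behaved. Equivalently, consider the points $x\in \wh U$ for which the $\mc L$-arc through $x$ runs from $\partial_-\wh U$ to $\partial_+\wh U$ with length at most $\epsilon$. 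The union $A$ of these arcs is a saturated subset for $\mc L$ and is a bundle of short $\mc L$-arcs over a subset of the boundary leaves.

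The second step is to choose a compact core. Choose a compact subsurface $K\subset \partial \wh U$, a union of compact subsurfaces of the boundary leaves, large enough that every point of $\partial\wh U\setminus K$ lies on a short $\mc L$-arc. This is possible by compactness of $M$. Leaves of $\mc F$ in $U$ close to a boundary leaf far out must be within $\epsilon$ of it on both sides, because a point of $U$ far from every compact piece is squeezed between boundary sheets. The key claim is that the complement of a compact set of $\wh U$ is covered by such short arcs. This holds because $\wh U$ has only finitely many points at distance $\geq\epsilon$ from $\partial\wh U$ up to bounded $\mc F$-leafwise distance, and a compact transversal argument then applies. Shrinking $K$ slightly, we may arrange that $\partial K$ consists of smooth circles. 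Over each such circle $c\subset\partial_-\wh U$, the short $\mc L$-arcs sweep out an annulus that ends on a circle of $\partial_+\wh U$.

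Now set $N$ to be the closure of the complement of the $\mc L$-arc bundle over $\partial\wh U\setminus K$. Then $\partial_\pm N=K\cap\partial_\pm\wh U$, and $\partial_\pitchfork N$ is the union of the annuli of $\mc L$-arcs over $\partial K$. Each component of the removed region is an $\mc L$-arc bundle over a noncompact end piece, which gives the $A_i$, and each $A_i$ meets $N$ along a single component of $\partial_\pitchfork N$. The induced foliation on $A_i$ has leaves transverse to the arc fibers, so $A_i$ is a foliated interval bundle. Compactness of $N$ follows from the choice of $K$.

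When $\mc L$ is the flow $\phi$, the annuli of $\partial_\pitchfork N$ are foliated by flow segments running from $\partial_-N$ to $\partial_+N$, so $N$ is compatible with $\phi_{\wh U}$. On each $A_i$ the flowlines are exactly the interval fibers, so $\phi$ is a dynamical product there. The main obstacle is the key claim that the complement of a compact core is fibered by short transverse arcs. It requires care when boundary leaves of $\wh U$ are identified in $M$ and when the arcs approach corners. I would follow the argument of Candel--Conlon verbatim, replacing their auxiliary transverse foliation by the flow.
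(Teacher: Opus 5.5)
Your proposal takes the same route as the paper. The paper gives no argument of its own; it cites Candel--Conlon, Section 5.2. The flow-compatible version is what one gets by running that argument with the auxiliary transverse foliation taken to be the flowlines of $\phi$, exactly as you suggest. Your architecture is also the right one: a thin part fibered by short transverse arcs, a compact nucleus, and arms equal to the arc bundles over the ends of the boundary leaves.

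The one step you try to justify yourself, however, is justified incorrectly. Distance to $\partial\wh U$ is not the relevant quantity. The points at distance $\geq\epsilon$ from $\partial\wh U$ are not within bounded leafwise distance of finitely many points; in a depth one region they meet uncountably many leaves. In any case, what must be shown is that the set of points lying on \emph{long} $\mc L$-arcs is relatively compact in $\wh U$, and that set includes points arbitrarily close to $\partial\wh U$.

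The mechanism is a count of transverse intervals:
\begin{enumerate}
\item Cover $M$ by finitely many biregular charts $W_j$ (product for both $\mc F$ and $\mc L$) with $\overline{W_j}\subset V_j$, where $V_j$ is also biregular.
\item Since $U$ is open and saturated, $U\cap V_j$ is a plaque times an open subset of the transverse interval. That subset is a countable disjoint union of intervals of finite total length, so only finitely many have length $\geq\epsilon$.
\item The corresponding plaque families over $\overline{W_j}$ have compact closure in $\wh U$.
\item Take $\epsilon$ smaller than the transverse collar between $W_j$ and $\partial V_j$. Then every other point of $U\cap W_j$ lies on an $\mc L$-plaque whose two endpoints lie on plaques of $V_j$ outside $U$. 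That is, it lies on a short arc of $\wh U$ from $\partial_-\wh U$ to $\partial_+\wh U$.
\end{enumerate}

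Some bookkeeping is also missing:
\begin{itemize}
\item $K\cap\partial_+\wh U$ has to be \emph{defined} from $K\cap\partial_-\wh U$ via the arc map. Otherwise the arc bundles over $\partial K$ on the two sides do not match up, and $N$ does not have the boundary you claim.
\item Compact components of $\partial\wh U\setminus K$ must be absorbed into $K$. This ensures each arm lies over noncompact surfaces.
\item Boundary circles of each noncompact component of $\partial\wh U\setminus K$ must be joined by bands. This ensures each arm meets $N$ in a single annulus.
\item As in Candel--Conlon, you need $U$ connected, or at least with finitely many components. If $U$ has infinitely many thin product components, no compact $N$ can exist, and your construction would instead output compact `arms'.
\end{itemize}
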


\subsection{Finite depth foliations}\label{sec:background finite depth}

A leaf $L$ of a foliation $\mc F$ is \defn{depth 0} if it is compact, and \defn{depth $k$} if $\overline{L} \setminus L$ is a union of leaves of depth at most $k -1$, with at least one leaf of depth exactly $k -1$. The foliation $\mc F$ is \defn{finite depth} if there is some $K$ such that all leaves of $\mc F$ are of depth less than or equal to $K$. We say that $\mc F$ is \defn{depth $k$} if the maximum of the depth of all leaves of $\mc F$ is exactly $k$.

An open, saturated set $U$ is of \defn{relative depth $k - \ell$} if the highest depth of a leaf contained in $U$ is $k$ and the lowest is $\ell$. We make a similar definition for its completion $\wh U$. Note that the relative depth of $\wh U$ may be different than that of $U$. 

A simple way to increase the depth of a finite depth foliation $\mc F$ is to \defn{shear} it, defined as follows. Let $U\cong L\times [0,1]$ be a topological product that is foliated as a product, i.e. with leaves of the form $L\times \{t\}$ for $t\in[0,1]$. Let $\gamma$ be either a simple closed curve or a properly embedded line in $L$, and let $A$ be the annulus or infinite strip obtained by extruding $\gamma$ in $U$, i.e. 
$$A=\gamma\times[0,1]\subset U.$$
We shear $\mc F$ along $A$ by cutting along $A$ and regluing by a monotone homeomorphism $g\colon [0,1]\to [0,1]$, i.e. identifying $(p,t)\in A$ with $(p,g(t))$. This will create a higher depth region spiraling onto $L\times \{1\}$ and $L \times \{0\}$ with juncture class Poincar\'e dual to $\pm\gamma$ (see Section \ref{sec:background holonomy junctures} for the definition of a juncture class).

To simplify product regions that are not necessarily trivially foliated products (such as those arising from shearing), we sometimes use an auxiliary transverse flow to \defn{blow down} these product regions as follows. Let $V$ be an open product region, and let $\phi$ be a smooth flow transverse to $\mc F$ and defining a dynamical product $V$. Following the proof of Proposition 2.1 in \cite{fenley1995quasigeodesic}, we first `thicken' any leaves in $\partial \wh V$ that are isolated among leaves of the same depth by replacing them with thin trivially foliated products. We then collapse $\wh V$ along flow lines of $\phi$. 

\subsection{Holonomy and junctures}\label{sec:background holonomy junctures}
Let $L$ be a leaf of a foliation $\mc F$ and let $\gamma$ be a closed curve in $L$ based at a point $x$. Let $\tau: [0, \varepsilon) \to M$ be a small, embedded interval positively transverse to $\mc F$ with $\tau(0) = x$. Then ``sliding $\tau$ along $\gamma$'' defines the \defn{holonomy map} 
\[
h_\gamma: I \to \tau,
\]
where $I \subset [0, \varepsilon)$ is a relatively open subinterval. This map is continuous and injective, and its regularity is the same as the regularity of $\mc F$. See \cite[Section 8.2]{candel_foliations_1999} for precise definitions. We always have that $x \in I$ and $h(x) = x$. This map only depends on $\gamma$ up to homotopy. We say that $\gamma$ has \defn{attracting holonomy on the positive side} if $h(y) < y$ for all $y$ sufficiently close to $x$. This property is independent of the choice of $\tau$ or of the basepoint $x$. For $\tau: (-\varepsilon, 0] \to M$ positively transverse to $\mc F$ with $\tau(0) = x$, we similarly define $\gamma$ to have \defn{attracting holonomy on the negative side}. We say that $\gamma$ has \defn{two-sided attracting holonomy} or just \defn{attracting holonomy} if it has attracting holonomy on both the positive and negative sides.

\medskip

Suppose $\mc F$ has two leaves $L_-$ and $ L_+$ which cobound a product region $V$ on the positive side of $L_-$ and the negative side of $L_+$. Let $A$ be an embedded annulus in $V$ which is transverse to $\mc F$ with boundary components $\partial_- A$ and $\partial_+ A$ lying in $L_-$ and $L_+$, respectively. We say that $A$ is an \defn{annulus of attracting holonomy} if $\partial_- A$ has attracting holonomy on the negative side and $\partial_+ A$ has attracting holonomy on the positive side. 

\medskip

Let $L$ be a depth $k$ leaf of a foliation $\mc F$ and let $S$ be a depth $k - 1$ leaf in $\overline{L} \setminus L$ such that any transversal through $S$ intersects $L$ on the positive side. We say that $L$ \defn{spirals onto $S$ on the positive side}. We make a similar definition for spiraling on the negative side. A spiral on the positive side of $S$ defines a \defn{juncture class} in $H^1(S)$ as follows: fix a basepoint $x \in S$ and $\tau: [0, \varepsilon) \to M$ a positive transversal. Then if $\gamma$ is any loop based at $x$ and $y \in \tau([0, \varepsilon)] \cap L$, $h_\gamma (y)$ is still in $L$. Since $L$ is a proper leaf, the intersection of $\tau([0, \varepsilon))$ and $L$ defines a sequence $y_i$ converging monotonically towards $x$. Therefore there is some $p$ such that $h_\gamma (y_i) = y_{i + p}$ for all $i$ sufficiently large. By reversing $\gamma$ if necessary, we assume $p \geq 0$, and we fix $p$ as the smallest positive such $p$ taken over all homotopy classes of loops in $S$. Then there is a well-defined homomorphism $j: \pi_1(S) \to \Z$ such that $h_\gamma (y_i) = y_{i + j(\gamma)p}$ for all $\gamma$. This homomorphism is the juncture class, and juncture classes for spiralling on the negative side are defined similarly. We say that $j$ is \defn{compactly supported} if it lives in the compactly supported cohomology $H^1_c(S)$. We note that the juncture class, by definition, is always primitive.

\begin{figure}[ht]
	\centering
	\includegraphics[width=0.4\linewidth]{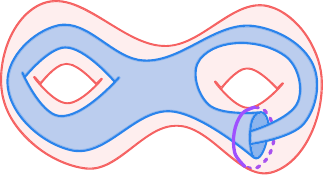}
	\caption{The purple curve is Poincar\'e dual to a juncture class on the orange surface.}
	\label{fig:juncture}
\end{figure}

The picture to keep in mind is Figure \ref{fig:juncture}, where the blue leaf $L$ spirals onto the orange leaf $S$ on the positive side, assuming the coorientation on $S$ points `outwards'. This determines a juncture class $j$ on $S$. The purple curve $\gamma$ is Poincar\'e dual to this juncture class, which we understand as follows. If $\alpha$ is a curve on $S$ with algebraic intersection number $p$ with $\gamma$, we observe that sliding a negative transversal $\tau$ along $\alpha$, we move `up' a level on $L$ once for every positive intersection of $\alpha$ with $\gamma$, and `down' a level for every negative intersection. Phrasing this in terms of intersections $\{y_i\}_{i\in\N}$ of $\tau$ with $L$, we get
\[
h_\alpha (y_i)=y_{i+p},
\]
so we have $j(\alpha)=p$.

Our main use of the $C^2$-smoothability hypothesis comes from the following theorem of Cantwell--Conlon \cite{cantwell_smoothability_1988, cantwell_topological_1994}:

\begin{theorem}[Cantwell--Conlon \cite{cantwell_smoothability_1988, cantwell_topological_1994}]
    Let $\mc F$ be finite depth. Then the following are equivalent:
    \begin{itemize}
        \item $\mc F$ is $C^2$-smoothable
        \item $\mc F$ is $C^\infty$-smoothable
        \item All junctures of all spirals of $\mc F$ are compactly supported.
    \end{itemize}
\end{theorem}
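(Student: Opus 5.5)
We prove the three implications $C^\infty$-smoothable $\Rightarrow$ $C^2$-smoothable $\Rightarrow$ compactly supported junctures $\Rightarrow$ $C^\infty$-smoothable. The first is trivial. The second is an obstruction argument based on Kopell's lemma: if $f,g$ are commuting $C^2$ diffeomorphisms of $[0,\varepsilon)$, $f$ has no fixed points in $(0,\varepsilon)$, and $g$ has a fixed point in $(0,\varepsilon)$, then $g=\mathrm{id}$. The third is a constructive smoothing, done inductively on depth. Throughout we use that the holonomy of a $C^2$ foliation is $C^2$, as recorded in Section \ref{sec:background holonomy junctures}.

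For the obstruction, assume $\mc F$ is $C^2$. Suppose some depth $k$ leaf $L$ spirals onto a depth $k-1$ leaf $S$ with a juncture class $j\in H^1(S)$ that is not compactly supported. Then on some end $e$ of $S$ there are loops $\alpha$ leaving every compact set with $j(\alpha)\neq 0$. The end $e$ itself spirals onto a leaf $S'$ of lower depth, with its own juncture loop $\gamma'\subset S'$. The holonomy $f=h_{\gamma'}$ is contracting on the side containing $S$ and has no fixed points there other than at the intersections with lower-depth leaves.
\begin{itemize}
\item Far out in $e$, the loops $\alpha$ are push-offs of a fixed loop $\delta\subset S'$ that commutes with $\gamma'$. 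To arrange this I would take $\delta$ and $\gamma'$ on a common annulus or torus piece of $S'$, or pass to the abelian subgroup they generate inside the holonomy pseudogroup.
\item Then $g=h_\delta$ fixes the points where the transversal meets $S$, since $S$ is a leaf and $\alpha$ is a closed loop on it.
\item On the other hand, $g$ moves the points of $L$ lying between consecutive intersections with $S$, shifting them by $j(\alpha)\neq 0$ levels. So $g\neq\mathrm{id}$.
\end{itemize}
This contradicts Kopell's lemma applied to $f$ and $g$. The main obstacle is precisely this step. One must set up an actual commuting pair of germs, or use Kopell on a suitable interval between consecutive fixed points where the relevant holonomies are genuinely defined and commute. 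This requires the octopus decomposition (Lemma \ref{lem:octopus}) to control where the end of $S$ sits relative to $S'$. I would first try to work inside a single arm $A_i$ of the octopus decomposition of the depth $k-1$ region, where the foliation is an interval bundle and the holonomy is a representation of $\pi_1$ of the arm's base.

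For the constructive direction, assume all junctures are compactly supported. We build a $C^\infty$ foliation homeomorphic to $\mc F$ by induction on depth. Compact leaves and depth $0$ regions are smoothed trivially. Given a smooth structure near the leaves of depth $<k$, consider each depth $k$ region via its octopus decomposition. Because each juncture is compactly supported, it is Poincaré dual to a compact (properly embedded) juncture surface lying in the nucleus $N$. Consequently, the spiraling in the arms $A_i$ can be realized with trivial holonomy along the ends. We then re-embed the region as a mapping torus over the juncture, using a monodromy on the transverse interval of the form $t\mapsto \lambda(t)$, where $\lambda$ is a contraction that is infinitely flat at the endpoints (built from $e^{-1/t}$). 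This makes all derivatives of the transverse holonomy match the identity at the lower-depth leaves, so the region glues $C^\infty$ onto the previously smoothed part. A homeomorphism to the original $\mc F$ is then assembled region by region, using the blow-down and product structure.
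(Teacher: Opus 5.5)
The paper does not prove this theorem. It is quoted from Cantwell--Conlon and used as a black box, so your argument has to stand on its own. The implication $C^\infty\Rightarrow C^2$ is fine. The obstruction direction, however, has a genuine gap at exactly the step you flag, and the remedies you propose cannot close it.

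Because the lift $\alpha$ of $\delta$ must close up on $S$, you need $j'(\delta)=0$, while $j'(\gamma')\neq 0$ since $h_{\gamma'}$ is contracting. Unless $S'$ is a torus, the centralizer of $\gamma'$ in $\pi_1(S')$ is cyclic, generated by a root $\eta$ of $\gamma'$, and $j'(\eta)\neq 0$. So the only $\delta$ that commutes with $\gamma'$ and has $j'(\delta)=0$ is trivial. Subsurfaces do not help: a holed torus has free fundamental group, and an annulus only contains powers of its core. Nor can you pass to an abelian subgroup of the holonomy pseudogroup. The germs $h_{\gamma'}$ and $h_\delta$ commute only if $h_{[\gamma',\delta]}$ is the identity germ. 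Although $[\gamma',\delta]\in\ker j'$ fixes the sheets of $S$ on $\tau$, near a sheet it moves the points of $L$ by $j(\alpha')-j(\alpha)$ levels, where $\alpha,\alpha'$ are the lifts of $\delta$ to that sheet and to its image under $h_{\gamma'}$. Nothing in the hypotheses makes this holonomy trivial. As written, your argument only handles toral $S'$.

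The missing idea is to give up commutation and run the \emph{proof} of Kopell's lemma on the conjugates $g_n=f^{-n}gf^n$.

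Set-up. Let $y_i$ be the sheets of $S$ on $\tau$. Take a fundamental domain $I_0$ of $f$ with endpoint $y_0$, on the side from which $L$ accumulates on $S$. Let $J_0$ be the component of $I_0\cap W$ adjacent to $y_0$, where $W$ is the depth $k$ region containing $L$.

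Each $g_n$ acts on $L\cap J_0$ as a translation. Indeed, $g_n$ is a holonomy map fixing the endpoints of $I_0$, so it preserves $J_0$. It acts on the discrete set $L\cap J_0=\{w_k\}$ (indexed so that $w_k\to y_0$) by an order-preserving bijection, hence as a translation $w_k\mapsto w_{k+s_n}$. Its germ at $y_0$ is the holonomy of $S$ along a loop conjugate to the lift $\alpha_n$ of $\delta$ to the sheet $f^n(y_0)$. So $s_n$ is a fixed nonzero multiple of $j(\alpha_n)$, and after replacing $\delta$ by $\delta^{-1}$ we have $s_n\geq 1$ for infinitely many $n$.

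Distortion estimate. Since $f$ is $C^2$ and the intervals $f^i(I_0)$ are disjoint, $|\log Df^n(x)-\log Df^n(y)|\leq V$ for $x,y\in I_0$, with $V$ independent of $n$. Hence
\[
D(g_n^m)(x)=Dg^m(f^n x)\,\frac{Df^n(x)}{Df^n(g_n^m(x))}\geq e^{-V}\,Dg^m(f^n x).
\]
Since $g$ fixes $y_i\to 0$, we have $Dg(0)=1$. So for fixed $m$ the right-hand side tends to $e^{-V}$ uniformly on $I_0$.

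Contradiction. The map $g_n^m$ sends $[w_0,w_1]$ onto $[w_{ms_n},w_{ms_n+1}]$, whose length is at most $\sup_{k\geq m}|w_{k+1}-w_k|\to 0$. Taking $m$ and then $n$ large contradicts the mean value theorem.

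Two points in this direction still need justification. First, you must choose a single $\delta$ with $j(\alpha_n)\neq 0$ for infinitely many $n$; this follows because, beyond some level, $H_1$ of the end neighborhood is generated by lifts of finitely many loops of $\ker j'$ to the various sheets. Second, when $S$ has depth at least $2$, loops can also escape along the ends of the sheets, so choosing $S'$ needs an inductive argument.

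Your constructive direction is only an outline, and three things are missing or inaccurate.
\begin{itemize}
\item A depth $k$ region is not a mapping torus of an interval map. What $\lambda$ provides is the model holonomy $\gamma\mapsto\lambda^{j(\gamma)}$ in a collar of each boundary leaf.
\item You do not spell out where compact support is used. It is what lets you place this flat holonomy over a compact part of each boundary leaf, so the region is an exact product near the lower-depth leaves where infinitely many gaps accumulate.
\item The claim that the smoothed foliation is homeomorphic to $\mathcal{F}$ needs the fact that a region's foliation is determined by its nucleus monodromy and its junctures. You have not supplied this.
\end{itemize}
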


We note that if a juncture class of a positive spiral  evaluates positively on a loop $\gamma$, then $\gamma$ has attracting holonomy on the positive side.

If $S$ is a depth $k$ leaf, then all depth $k+1$ leaves spiraling onto $S$ on the positive side must have the same juncture class, up to a positive multiple. Call this the \defn{positive juncture class} $j^+(S)$, and similarly define the \defn{negative juncture class} $j^-(S)$. These classes do not typically agree.

\subsection{Locally top-depth components}
\begin{definition}\label{def:ltdregion}
    A \defn{locally top-depth component} $U$ of a foliation $\mathcal{F}$ is a connected, open, saturated subset, such that all leaves in $U$ have the same depth, which is finite and greater than the depth of any leaf in $\overline{U}$.
\end{definition}

Every finite depth foliation has at least one locally top-depth component, coming from a maximal connected set of maximum depth leaves. However, not all locally top-depth components are comprised of maximum depth leaves. A finite depth foliation can also have an infinite number of locally top-depth components; however, all but a finite number of them will be product regions if the ambient 3-manifold is compact.

The key feature we will use about locally top-depth components is that the foliation restricts to a foliation by fibers over the circle.

\begin{lemma}\label{lem:top depth is fibered}
    If $U$ is a locally top-depth component of a foliation $\mathcal{F}$, where each leaf in $U$ is homeomorphic to a surface $L$, then $\mathcal{F}\left.\right|_U$ fibers over $S^1$ with fiber $L$. 
    
    Conversely, if $U$ is a connected open saturated set of a finite depth foliation on which the foliation restricts to a fibration over $S^1$, then $U$ is a locally top-depth component.
\end{lemma}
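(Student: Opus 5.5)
We prove the two directions separately, using a transverse flow to build the fibration in the forward direction. Throughout, $U$ is a locally top-depth component of depth $k$.

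\textbf{Forward direction.} First we show that every leaf $L$ of $U$ is proper in $U$ and that $U$ contains no limit points of $L$. Since $L$ has depth $k$, the set $\overline{L}\setminus L$ consists of leaves of depth at most $k-1$. Every leaf of $U$ has depth exactly $k$, so $\overline{L}\cap U = L$. Hence $L$ is closed in $U$, and by the usual argument for depth (leaves of finite depth are proper) it is properly embedded in $U$.

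Next, fix a flow $\phi$ transverse to $\mc F$. For $x\in L$, take a small transversal through $x$. Since $L$ is proper, the leaf is locally a single plaque, and points slightly above $L$ lie on other leaves of $U$. The leaf space of $\mc F|_U$ is then a connected $1$-manifold; it is Hausdorff because $U$ has no limit points of its own leaves. It cannot be $\R$ or an interval with all of $U$ a product, because then leaves would escape to $\partial U$.

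To rule this out we use the octopus decomposition (Lemma \ref{lem:octopus}) of $\wh U$. The sutured core $N$ is compact and every leaf of $U$ meets it. The first return of $\phi$ to $L$ is defined on $L\cap N$, since flowlines in $N$ starting on $L$ must return to $L$: otherwise they accumulate on some leaf $L'$ of $U$ with $L'\subset\overline{L}$, a contradiction. On the arms $A_i$ the return is defined by the product structure. This gives a first-return homeomorphism $f\colon L\to L$. The map $(x,t)\mapsto\phi^t(x)$ with the return time then gives $U\cong L\times\R/(x,t)\sim(f(x),t-\tau(x))$, which fibers over $S^1$ with leaves as fibers.

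We expect the main obstacle to be defining the return time globally. One must show that the orbit of every point of $L$ returns to $L$. A positive semi-orbit that stays in $U$ but never returns to $L$ either converges into $\partial\wh U$ (excluded because it is transverse and the boundary leaves have lower depth and are not in $U$), or accumulates in $U$, giving a limit leaf in $U$ (excluded by properness). In practice, we would cite the standard fact that a Hausdorff, compactly-generated leaf space that is a $1$-manifold forces a circle (\cite[Ch.~5]{candel_foliations_1999}).

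\textbf{Converse.} Suppose $\mc F|_U$ is a fibration over $S^1$. Then every leaf of $U$ is closed in $U$, and all leaves of $U$ are mutually isotopic, so they share limit sets. Hence they all have the same depth $d$, and $\overline{L}\setminus L\subset\overline U\setminus U$.

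It remains to check that leaves of $\overline U\setminus U$ have depth less than $d$. Each such leaf lies in the closure of the leaves of $U$. By finite depth and the definition of depth, a leaf lying in $\overline L\setminus L$ has depth at most $d-1$, so every leaf in $\overline U\setminus U$ has depth less than $d$.

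Finally, the connectedness of $U$ is given, so $U$ satisfies Definition \ref{def:ltdregion} and is a locally top-depth component.
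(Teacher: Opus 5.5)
Both directions have a genuine gap, and it is the same missing idea in each.

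\textbf{Converse.} The gap is at ``each such leaf lies in the closure of the leaves of $U$ \dots\ so every leaf in $\overline U\setminus U$ has depth less than $d$.'' A leaf $S\subset\overline U\setminus U$ lies in the closure of the \emph{union} of the leaves of $U$. The definition of depth bounds the depth of $S$ only if $S\subset\overline L\setminus L$ for a \emph{single} leaf $L\subset U$, and that is exactly what can fail. A symptom: your argument never uses that the base is $S^1$ rather than an interval. It would therefore equally ``show'' that a trivially foliated product $L\times(0,1)$ is locally top-depth, which is false, since its boundary leaves have the same depth as its interior leaves. The missing step, which is how the paper argues, goes as follows. If some $S\subset\overline U\setminus U$ had depth $d$, the leaves of $U$ could not spiral onto $S$. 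By Dippolito semistability \cite[Theorem 5.3.4]{candel_foliations_1999} they then form a product collar on $S$ along an auxiliary transverse flow. Then $S$ compactifies one end of the leaf space of $\mc F|_U$, which is therefore an open interval and not $S^1$.

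\textbf{Forward direction.} Here you never establish why the leaf space is a circle rather than an interval, and the exclusions in your ``main obstacle'' paragraph do not work. An orbit from $L$ can exit through $\partial_+\wh U$ without recrossing $L$; this is not ruled out merely because the boundary leaves have lower depth. It is ruled out only once you know $L$ spirals onto every boundary leaf of $\wh U$, which is the point at issue, and such exits happen routinely in a trivially foliated product. An orbit accumulating at a point of $U$ yields a leaf $L'$ that the \emph{orbit} crosses infinitely often, not a leaf $L'\subset\overline L$, so properness gives no contradiction.

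The ``standard fact'' you propose to cite cannot hold as stated. A trivially foliated product $L\times(0,1)$ has a Hausdorff 1-manifold leaf space $\R$ and a compact octopus core. What is needed is the same dichotomy at a boundary leaf $S$ of $\wh U$: either the leaves of $U$ spiral onto $S$, or they form a product collar and $S$ has their depth, which the top-depth hypothesis forbids. The paper packages this as \cite[Lemma 9.1.6]{candel_foliations_1999}: the leaf space is a circle unless $U$ is a trivially foliated product.

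Separately, properness alone does not make the leaf space a Hausdorff 1-manifold. You also need trivial germinal holonomy, which does follow from $\overline L\cap U=L$ and coorientability. You further need a product-neighborhood result for these noncompact leaves, which the paper takes from \cite[Lemma 9.1.3, Theorem 9.1.4]{candel_foliations_1999}. The fact that no leaf of $U$ accumulates on another does not by itself exclude non-separated pairs of leaves.
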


\begin{proof}
    Suppose $U$ is a top-depth component. A combination of Lemma 9.1.3 and Theorem 9.1.4 in \cite{candel_foliations_1999}  implies that the leaf space $\mathcal{L}$ of $\mathcal{F}\left.\right|_U$ is a closed 1-manifold. The key observation is that all the leaves in $U$ are proper and have trivial germinal holonomy, so a neighborhood of each leaf of $U$ is a trivial product, giving the local Euclidean structure.
    
    Note that although the results in \cite{candel_foliations_1999} are stated for $C^2$ foliations, the primary place the assumption is used is to guarantee that the leaves of $U$ are proper, local minimal sets. However, by our hypothesis, all leaves of $U$ are proper and so the rest of the proofs of both results hold verbatim.

    The leaf space $\mathcal{L}$ will be a circle if and only if $U$ is not a trivially foliated product (\cite[Lemma 9.1.6]{candel_foliations_1999}), which follows from the definition of a locally top-depth component.

    On the other hand, suppose $U$ is a connected open saturated set in a finite depth foliation which is not a locally top-depth component. We claim that $\mc F$ does not restrict to a fibration over $S^1$ on $U$. If not all leaves of $U$ are homeomorphic, or if not all leaves in $U$ have the same depth, this is clear. So suppose $U$ satisfies all the required conditions except that there is some leaf $S$ in $\overline{U} \setminus U$ which has depth equal to the depth of leaves in $U$ (by definition, $S$ cannot have greater depth than $U$). In particular, this shows that the leaves of $U$ do not spiral onto $S$. Then $S$ is homeomorphic to the leaves of $U$ by an isotopy along flowlines of an auxiliary transverse flow. This is a consequence of the Dippolito Semistability Theorem, see \cite[Theorem 5.3.4]{candel_foliations_1999}. 
    
    The same argument as above shows that the leaf space of $\mc {F} \left.\right|_U$ is a 1-manifold. But $S$ provides an endpoint compactifying the leaf space of $U$ on one side. This shows that the leaf space is homeomorphic to an open interval, not $S^1$.
\end{proof}

We note there exist foliations which restrict to fibrations on open, saturated sets, for which the leaves of the fibrations are not finite depth.

The Octopus Decomposition Lemma (Lemma \ref{lem:octopus}) immediately yields the following for locally top-depth components.

\begin{lemma}\label{lem:depth1octopus}    Let $\phi$ be a flow transverse to a foliation $\mc F$ and let $U$ be a locally top-depth component. Then there is an immersed, connected, sutured manifold $N$, carrying $\widehat{U}$ and compatible with $\phi$, which is embedded away from $\partial_\pm N$. Moreover, $N$ satisfies the following properties:
    \begin{enumerate}
        \item $\partial_\pm N \subset \partial \widehat{U}$
        \item The pullback of $\mathcal{F}$ to $N$ is depth 1 with all depth 0 leaves contained in the boundary.
        \item The foliation by flowlines of $\phi$ gives the structure of an interval bundle in the complement of $N$ in $U$. In particular, every orbit of $\phi$ which enters $U$ outside of $N$ leaves $U$.
    \end{enumerate}
\end{lemma}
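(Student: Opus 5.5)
The plan is to apply the Octopus Decomposition (Lemma \ref{lem:octopus}) to the open saturated set $U$, choosing the decomposition to be compatible with $\phi$. This gives $\wh U = N \cup A_1 \cup \dots \cup A_n$, where:
\begin{itemize}
\item[] $N$ is a sutured manifold carrying $\mc F_{\wh U}$ and compatible with $\phi_{\wh U}$, with $\partial_\pm N \subset \partial_\pm \wh U$;
\item[] each arm $A_i$ is a foliated interval bundle on which $\phi$ defines a dynamical product.
\end{itemize}
Composing with the canonical immersion $\wh U \to M$, we obtain an immersion $N \to M$. This immersion is injective on the interior of $\wh U$. It can fail to be injective only where boundary leaves of $\wh U$ are identified, so $N$ is embedded away from $\partial_\pm N$. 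This gives property (1) immediately, and it shows that $N$ carries $\wh U$ and is compatible with $\phi$.

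For connectedness, I would argue as follows. Since $U$ is connected, $\wh U$ is connected. Each arm $A_i$ is an interval bundle glued to $N$ along components of $\partial_\pitchfork N$. Hence a component of $N$ together with the arms attached to it is a union of components of $\wh U$. If $N$ had several components, some component of $\wh U$ would then consist of arms alone. That component would be a product, which is impossible for a locally top-depth component: by Lemma \ref{lem:top depth is fibered}, $U$ fibers over $S^1$ and is not a product. If necessary, we can discard components of $N$ or merge them by absorbing arms into $N$ so that $N$ is connected. Absorbing an arm keeps the sutured structure compatible with $\phi$, since each arm is a dynamical product.

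For property (2), note that the leaves of $\mc F_{\wh U}$ in the interior are leaves of $U$, and all of them have the same depth $k$. The boundary leaves of $\wh U$ lie in $\overline U \setminus U$ and so have depth less than $k$. In the pullback foliation on $N$, the boundary leaves $\partial_\pm N$ are compact, so they have depth $0$. Each interior leaf is a leaf of $U$ intersected with $N$. By Lemma \ref{lem:top depth is fibered}, the interior leaves are fibers of a fibration over $S^1$, so none of them accumulates on another interior leaf. Their closures in $N$ therefore only add boundary leaves, and the interior leaves have depth exactly $1$ in $N$. No interior leaf can be compact, since otherwise it would be a compact leaf of $U$ of depth $0 < k$ inside the fibered region. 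This argument needs $U$ to have depth $k \geq 1$. If $k=0$, then $U$ consists of compact leaves, and $\overline U$ would contain lower-depth leaves, which is impossible, so this case does not arise.

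For property (3), the complement of $N$ in $U$ is the union of the interiors of the arms $A_i$. Each arm is a dynamical product, so $\phi$-flowlines foliate it by intervals running from $L_i^-$ to $L_i^+$. An orbit entering $U$ outside $N$ therefore travels along an interval fiber and exits through $\partial_+ \wh U$. Such an orbit cannot enter $N$ through the sutures, because $\phi$ is tangent to $\partial_\pitchfork N$.

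I expect the main obstacle to be property (2). The delicate point is checking that the depth in $N$ really drops to $1$, that is, that the closure of an interior leaf of $N$ contains no other interior leaf. This requires using the fibration structure from Lemma \ref{lem:top depth is fibered} carefully, rather than only the depth count in $M$.
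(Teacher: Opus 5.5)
Your route is the paper's: the paper treats this lemma as an immediate consequence of Lemma~\ref{lem:octopus} applied with a $\phi$-compatible decomposition. Your treatment of embeddedness, (1) and (3) is fine. The gap is in (2).

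Leaves of $\mathcal{F}_N$ are not leaves of $U$. They are \emph{components} of $L\cap N$ for leaves $L$ of $U$. So ``a compact interior leaf of $\mathcal{F}_N$ would be a compact leaf of $U$'' does not follow. For an arbitrary octopus decomposition it is actually false, because the arms are only foliated $I$-bundles and may carry all of the holonomy. Here is an example:
\begin{itemize}
\item Let $\widehat U\cong\Sigma\times[0,1]$ be a locally top-depth product region with $\Sigma$ noncompact (so the depth is at least $2$), obtained by shearing along $\gamma\times[0,1]$.
\item Take $K\subset\Sigma$ compact whose complementary components are end neighborhoods $B_j$, each with one boundary circle, and let $\gamma$ be a nonseparating curve inside some $B_j$.
\item Then $N=K\times[0,1]$ with arms $B_j\times[0,1]$ is a legitimate decomposition compatible with the vertical flow. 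Yet every interior leaf of $\mathcal{F}_N$ is a compact copy $K\times\{t\}$.
\end{itemize}
The same thing happens for non-product $U$, in a collar of any component of $\partial_\pm N$ on which the relevant juncture class restricts to zero.

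So (2) depends on \emph{choosing} $N$ large enough to see the spiraling. For instance, with compactly supported junctures you can enlarge the nucleus until every arm is trivially foliated. Then each $L\cap N$ is connected, since every arm piece is attached along a single circle. It also accumulates on $\partial_\pm N$, hence is noncompact, and your closure argument finishes. Your proposal never makes or justifies such a choice. It also misplaces the difficulty: that interior leaves do not accumulate on each other is easy, while noncompactness of the pieces of $L\cap N$ is where the work lies.

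Separately, your connectedness argument relies on a false claim. Locally top-depth components \emph{can} be product regions: this is exactly the situation of Lemma~\ref{lem:juncturesinproduct}, and the reason Theorem~\ref{thm:locallytopdepthclosing} assumes non-product. Fibering over $S^1$ with fiber an interior leaf is compatible with $U\cong\Sigma\times(0,1)$ for a boundary leaf $\Sigma$. The step ``several components of $N$ force a component of $\widehat U$ made only of arms'' is also a non sequitur. Discarding components of $N$, or absorbing noncompact arms, would break the decomposition or the compactness of $N$. None of this is needed. $\widehat U$ is connected because $U$ is dense in it, and every arm meets $N$ in a single suture annulus. Hence any component of $N$ together with its attached arms is a nonempty clopen subset of $\widehat U$, and therefore all of it.
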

In the case that $U$ has depth 1, $N = \widehat{U}$.

If $U$ is a locally top-depth component, then each (positive or negative) boundary component of $\widehat{U}$ has an associated (negative or positive) juncture class from any leaf in the interior of $U$. This set of junctures determines the foliation in the interior, see \cite{cantwell_foliation_1999}. We will only need this fact in the case of a product region, where we will use the following slightly stronger result.

\begin{lemma}\label{lem:juncturesinproduct}
    Let $U$ be a locally top-depth component which is a product region, and let $\Sigma_\pm$ be the positive and negative boundary components of $\widehat{U}$. Then $j^+(\Sigma_-) = -j^-(\Sigma_+)$, and up to topological conjugacy $\widehat{U}$ is obtained by shearing the product foliation $\Sigma_- \times [0,1]$ along any product annulus or infinite strip $\gamma \times [0,1]$, where $\gamma$ is a connected, embedded 1-manifold in $\Sigma_-$ Poincar\'e dual to $j^+(\Sigma_-)$.
\end{lemma}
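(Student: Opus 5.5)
We use Lemma \ref{lem:top depth is fibered} to realize $\mc F|_U$ as a fibration over $S^1$, and then compare two descriptions of a single monodromy. Write $\wh U \cong \Sigma \times [0,1]$ with $\Sigma_- = \Sigma\times\{0\}$ and $\Sigma_+=\Sigma\times\{1\}$; this identification comes from the product structure, and the foliation need not respect it. By Lemma \ref{lem:top depth is fibered}, $\mc F|_U$ fibers over $S^1$ with fiber a leaf $L$. Fix an auxiliary transverse flow $\phi$, compatible with a product structure, so that its flowlines in $U$ are the $(0,1)$ fibers. Since $U$ is a dynamical product, the first-return map is not defined everywhere. Instead we work with the ``flow-to-the-boundary'' maps $\pi_\pm\colon L\to\Sigma_\pm$, which are homeomorphisms onto $\Sigma_\pm$, since each flowline meets each leaf at most once and $L$ is the full product fiber up to the spiraling.

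The fibration has a monodromy $f\colon L\to L$ with $U\cong L\times\R/(x,t)\sim(f(x),t+1)$. Transport it by $\pi_-$ to $g=\pi_-\circ f\circ\pi_-^{-1}$ on $\Sigma_-$. The first key step is to show that $g$ is homotopic to the identity, indeed isotopic to a ``translation'' along the juncture. This holds because $\phi$ gives a homotopy from the inclusion of consecutive leaves in $U$ to $\Sigma_-$. Concretely, the leaves $L_i$ (the images $f^i(L)$) converge to $\Sigma_-$ as $i\to-\infty$ and to $\Sigma_+$ as $i\to+\infty$, and $\pi_-$ conjugates the inclusion of $L_i$ to the identity.

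Next we read off the junctures. Cut $U$ along one leaf $L_0$ to obtain a fundamental domain $D$ between $L_0$ and $L_1=f(L_0)$. Project $D$ to $\Sigma_-$ along $\phi$. The region of $\Sigma_-$ where the flowline leaves $D$ through $L_1$ before reaching $\Sigma_+$ is separated from the rest by a properly embedded 1-manifold $\gamma$, namely the image of the frontier where $L_1$ ``wraps past'' $L_0$. By the definition of the juncture via holonomy counting levels $y_i\mapsto y_{i+j(\alpha)p}$, the algebraic intersection of a loop $\alpha\subset\Sigma_-$ with $\gamma$ equals the number of levels gained, so $\gamma$ is Poincaré dual to $j^+(\Sigma_-)$. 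Running the same argument from the $\Sigma_+$ side with the same $\gamma$ gives $j^-(\Sigma_+)$. Since the coorientation reverses relative to the spiraling direction, and the levels increase toward $\Sigma_+$ rather than away from it, we get $j^+(\Sigma_-) = -j^-(\Sigma_+)$ under the identification $\Sigma_-\cong\Sigma_+$.

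For the conjugacy statement, apply the shearing construction of Section \ref{sec:background finite depth} to $\Sigma_-\times[0,1]$ along $\gamma\times[0,1]$, where $\gamma$ is any embedded 1-manifold dual to $j^+(\Sigma_-)$. This produces a foliation with the same pair of junctures. We then construct a leaf-preserving homeomorphism by matching fundamental domains: both $U$ and the sheared model are obtained from $D\cong\Sigma\times[0,1]$ cut along $\gamma\times[0,1]$ and reglued, so a homeomorphism of one fundamental domain extends equivariantly. Independence of the choice of $\gamma$ in its dual class follows from isotoping between two embedded duals; any two are related by an isotopy together with the standard cut-and-paste moves realizing the same class, and each such move changes the sheared foliation only by a topological conjugacy. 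The main obstacle is this last independence, together with the claim that the frontier $\gamma$ can be taken embedded and connected. I would first try to arrange this by choosing $L_0$ generically with respect to $\phi$ and using primitivity of the juncture class to make $\gamma$ connected.
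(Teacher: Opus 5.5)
There is a genuine gap at the foundation of your argument. You claim that each flowline of $\phi$ meets each leaf of $U$ at most once, so that $\pi_\pm\colon L\to\Sigma_\pm$ are homeomorphisms. This is false; the opposite holds. Every interior leaf $L$ spirals onto $\Sigma_-$, so a fiber $\{x\}\times(0,1)$ meets $L$ in a sequence $y_i\to 0$, which is the very sequence you later use to count levels. The same happens toward $\Sigma_+$. In fact $L$ meets each fiber in a discrete set order-isomorphic to $\Z$, and $\pi_-$ is the infinite cyclic covering of $\Sigma_-$ determined by $j^+(\Sigma_-)$. The first-return map of $\phi$ to $L$ is also defined everywhere, and it represents the monodromy $f$.

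Consequently $g=\pi_-\circ f\circ\pi_-^{-1}$ is undefined, and the ``leaves'' $f^i(L)$ are all the single leaf $L$. The projection of $D$ to $\Sigma_-$ is not injective, so the frontier curve $\gamma$ is never actually produced. The conjugacy step then asserts that $U$ is obtained from $D$ by cutting along $\gamma\times[0,1]$ and regluing, but that is the statement to be proved. The independence of the choice of $\gamma$ is left resting on unproved claims about cut-and-paste moves.

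The missing idea is to use the holonomy of the foliated interval bundle rather than a monodromy; this is essentially the paper's argument. With the dynamical-product flow, $\widehat U\cong\Sigma_-\times[0,1]$ is a foliated $[0,1]$-bundle, so it is determined up to conjugacy by $\rho\colon\pi_1(\Sigma_-,x)\to\mathrm{Homeo}_+([0,1])$. Each $\rho(\alpha)$ is order preserving and preserves the $\Z$-ordered set $L\cap(\{x\}\times(0,1))$. Hence it acts on that set as a translation $n\mapsto n+c(\alpha)$ everywhere, not just on a tail. Reading $c$ near $0$ gives $c=p\,j^+(\Sigma_-)$. Reading it near $1$, where the indexing toward the boundary leaf runs the other way, gives $c=-p\,j^-(\Sigma_+)$ with the same $p$. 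This is the precise content of your ``levels increase toward $\Sigma_+$'' heuristic, and it yields $j^+(\Sigma_-)=-j^-(\Sigma_+)$.

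Since interior leaves have trivial holonomy, an element of $\rho(\pi_1)$ with an interior fixed point is the identity. So $\rho$ factors through $c$ and is conjugate to the action by integer translations on $(0,1)\cong\R$. Now take a shear along any connected $\gamma$ dual to $j^+(\Sigma_-)$, using a regluing map $h$ that is fixed-point free on $(0,1)$. Its holonomy is $\alpha\mapsto h^{\alpha\cdot\gamma}$, which is conjugate to the same representation. Hence the model is conjugate to $\widehat U$ for every such $\gamma$, and the obstacle you flagged disappears.
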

\begin{proof}
	First note that $\widehat{U}$ is a foliated interval bundle homeomorphic to $\Sigma_- \times [0, 1]$, i.e. under a homeomorphism, the pullback foliation on $\wh U$ is transverse to the vertical foliation by intervals in $\Sigma_-\times [0,1]$. Thus, the foliation is determined up to conjugacy by the induced holonomy representation 
	\[\rho: \pi_1(\Sigma_1, x) \to \Homeo_+([0, 1]),
	\]
	where $x$ is some basepoint. Moreover, since $U$ is a locally top-depth component, it has trivial holonomy, and hence $\rho$ has the property that $\rho(g)$ is the identity if and only if $\rho(g)$ has an interior fixed point.
    
    From this we see as follows that $\rho(g)$ is abelian and determined precisely by $j^+(\Sigma_-)$. Let $\{x_i \in (0, 1)\}_{i\in \N}$ be a monotonically decreasing sequence so that $(x,x_i)\in\Sigma_-\times[0,1]$ is contained in the same leaf $L$ for all $i$. Then for sufficiently large $i$ and any $g, h \in \pi_1(\Sigma_-, x)$, there is some $n$ so that 
    \[
    \rho([g,h])(x_i) = x_{i + nj^+([g, h])} = x_i
    \]
    for $j^+ = j^+(\Sigma_-)$.

    We see that $\rho$ is conjugate to the representation $j^+(\Sigma)$ under an identification of $(0,1)$ and $\R$ taking the points $L \cap x \times (0,1)$ to the integers. The same argument shows that $\rho$ is determined by $j^-(\Sigma_+)$, except that loops which decrease points go to loops that increase points under the identification. Hence $j^+(\Sigma_-) = -j^-(\Sigma_+)$. 

    Moreover, a foliation on $\Sigma_- \times [0,1]$ determined by shearing along a curve $\gamma$ representing the juncture has a conjugate holonomy map, showing that the foliations are conjugate.
\end{proof}

\section{Forcing closed orbits}\label{sec:forcingclosedorbits}

As we move into the main portion of the paper, we set the following convention.

\begin{convention}
For the rest of the paper, let $M$ be a closed, orientable, atoroidal 3-manifold, and let $\mc F$ be a coorientable 2-dimensional foliation on $M$.
\end{convention}

\subsection{Overview}
In this section we give a sufficient criterion for $\mc F$ to force closed orbits using the dynamics of endperiodic maps. We first review the theory of endperiodic maps and fibered regions in foliations, and then show how this theory quickly leads to orbit forcing. The main result is Theorem \ref{thm:locallytopdepthclosing}, which we state below. We also sketch an alternative perspective using the generalized endperiodic maps of \cite{cantwell_endperiodic_2021}.

\begin{theorem}\label{thm:locallytopdepthclosing}
	Suppose $\mc F$ is finite depth, and let $\phi$ be a transverse flow. If $\mc F$ contains a locally top depth component $U$ that is not a product region, then $\phi$ contains a closed orbit.
\end{theorem}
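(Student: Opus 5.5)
We reduce to the dynamics of an endperiodic map on a single leaf of $U$. First apply Lemma \ref{lem:depth1octopus} to $U$ and $\phi$: this gives a connected sutured manifold $N$ carrying $\wh U$, compatible with $\phi$, embedded away from $\partial_\pm N$, with $\mc F_N$ depth one and all compact leaves in $\partial N$. By item (3) of that lemma, every orbit entering $U$ outside $N$ leaves $U$. So any closed orbit of $\phi_N$ contained in the interior of $N$ is a genuine closed orbit of $\phi$, and it suffices to find one there. By Lemma \ref{lem:top depth is fibered}, the interior of $N$, which is the part of $U$ carried by $N$, fibers over $S^1$ with noncompact fiber $L$. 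A leaf $L$ of $U$ meets $N$ in a properly embedded surface $L_N = L\cap N$, and every orbit of $\phi_N$ through $\mathrm{int}\,N$ that does not exit through $\partial_+N$ returns to $L_N$. This gives a partially defined first-return map $f\colon L_N\to L_N$.

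The next step is to show that $f$ is an endperiodic map in the sense of Handel--Miller, Cantwell--Conlon--Fenley and Landry--Minsky--Taylor. Near $\partial_+ N$, the depth one leaf spirals onto the compact leaves with juncture classes $j^\pm$. Using the compatibility of $N$ with $\phi$ (flowlines foliate $\partial_\pitchfork N$ by intervals), a neighborhood of each compact boundary leaf is a product in which $\phi$ runs monotonically between consecutive sheets of $L$. Hence $f$ acts as a translation by one sheet on the corresponding ends. Positive ends are attracted and negative ends repelled, so $f$ extends to an endperiodic homeomorphism of $L$ (after possibly modifying $\phi$ near the boundary, which is harmless since no closed orbit lives there). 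Its mapping torus is $\mathrm{int}\,N$ with its depth one foliation, as in Landry--Minsky--Taylor. The hypothesis that $U$ is not a product region means $N$ is not a product sutured manifold, equivalently $f$ is not isotopic to a translation: its positive and negative escaping sets do not exhaust $L$ as they would in a product.

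Then we invoke the Lefschetz-type fixed point theorem for endperiodic maps (Landry--Minsky--Taylor, or the Handel--Miller theory of \cite{cantwell_endperiodic_2021}). It applies to any endperiodic map not isotopic to a translation, and states that some power $f^n$ has a fixed point in the compact invariant set $L\setminus(\mathcal U_+\cup\mathcal U_-)$. The key input is that periodic points are forced for every map in the isotopy class, not just the Handel--Miller representative. This is obtained by lifting to the compactified universal cover and noting that the Lefschetz index of the Nielsen class is nonzero. Such a periodic point of $f$ lies in a compact region away from $\partial_\pm N$, so its $\phi$-orbit is a closed orbit in $\mathrm{int}\,N\subset U$.

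The main obstacle is the second step: checking that the first-return map of an \emph{arbitrary} transverse flow is genuinely endperiodic. This means it must be defined on all of $L$ and behave like a translation near the ends, and the issue is robustness rather than just isotopy class. I would handle this by using the compatible octopus decomposition to control the flow near $\partial_\pm N$. If needed, I would cite the fact that the Landry--Minsky--Taylor forcing works for any homeomorphism endperiodically isotopic to $f$, so only the isotopy class (determined by $\mc F_N$) matters.
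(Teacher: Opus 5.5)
Your outline (octopus decomposition, first-return map on a leaf of $N$, then Landry--Minsky--Taylor) matches the paper's. The decisive step, however, has a genuine gap: the forcing result you invoke is not available for ``any endperiodic map not isotopic to a translation''. Landry--Minsky--Taylor force periodic points throughout an isotopy class by comparing with a spun pseudo-Anosov representative. That representative is only constructed for \emph{atoroidal} endperiodic maps of boundaryless surfaces with finitely many ends, all accumulated by genus. For a map preserving a multicurve up to isotopy, the Nielsen-class index argument you sketch has nothing to run on.

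Your argument never verifies atoroidality, and never uses the standing hypothesis that $M$ is atoroidal, which is where atoroidality has to come from. The paper gets it through a case split that you do not make:
\begin{itemize}
\item If $\mathcal{F}$ is not taut, then, $M$ being atoroidal, it has a Reeb component. Its interior is itself a non-product locally top-depth component whose leaves are planes with a single planar end, so no endperiodic theory applies. The paper handles this case by Brouwer's theorem on a large compact disk in the plane leaf (Lemma \ref{lem:reebcomponentforces}).
\item If $\mathcal{F}$ is taut, then $U\hookrightarrow M$ is $\pi_1$-injective. Hence $U$ is atoroidal because $M$ is, and this is what makes the monodromy atoroidal.
\end{itemize}

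The reduction to a surface where the theory applies is also missing. The first-return map is not defined on a whole leaf of $U$, because orbits entering the interval-bundle arms of the octopus decomposition leave $U$. On a leaf $L_N$ of $\mathcal{F}_N$ it is defined, but $L_N$ has boundary along the sutures, possibly including noncompact boundary lines. So your claim that ``$f$ extends to an endperiodic homeomorphism of $L$'' is not justified.

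The paper fixes this in three steps:
\begin{enumerate}
\item It glues strips of genus along the noncompact boundary lines, extending the map there by a translation.
\item It doubles along the boundary, producing a boundaryless surface all of whose ends are nonplanar.
\item It re-proves that the doubled map is atoroidal. A periodic curve cannot lie in one copy of $L_N$, by atoroidality of $U$. It also cannot cross a strip or a compact boundary component, since its iterates would eventually be disjoint from a curve it meets essentially.
\end{enumerate}
Only then do the spun pseudo-Anosov representative and the translation dichotomy apply, with the translation case corresponding exactly to $N$, hence $U$, being a product.

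By contrast, the step you flag as the main obstacle, endperiodicity of the return map of an arbitrary transverse flow, is routine here. It is Lemma \ref{lem:depth one is endperiodic}, using spiraling neighborhoods of the compact leaves.
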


This result is the main idea behind the positive direction of the Characterization Theorem. We show that unless our techniques for breaking closed orbits apply (Section \ref{sec:plugs}), then $\mc F$ either contains a locally top depth region $U$ as in Theorem \ref{thm:locallytopdepthclosing}, or $\mc F$ is approximated by such foliations. In either case, this produces a closed orbit.

\subsection{Endperiodic and spun pseudo-Anosov maps}

Let $L$ be an infinite-type, orientable, boundaryless surface with finitely many ends, each accumulated by genus, and let $g\colon L\to L$ be a homeomorphism. We say an end $\mc E$ of $L$ is \defn{attracting} under $g$ if there exists a neighborhood $U\subset L$ of $\mc E$ and a power $n\geq 0$ such that 
\[
g^n(U)\subset U,
\] 
and
\[
\bigcap_{k\geq0}g^{kn}(U)=\varnothing.
\]
Similarly, we say $\mc E$ is \defn{repelling} under $g$ if it is attracting under $g^{-1}$. Such neighborhoods $U$ are called \defn{escaping neighborhoods}. We say $g$ is \defn{endperiodic} if there exists some power $k\geq1$ such that every end of $L$ is either attracting or repelling under $g^k$.

Under an endperiodic map $g$, we say a point is \defn{positively escaping} if positive iterates escape out an attracting end for $g$. Similarly, a point is \defn{negatively escaping} if negative iterates escape out a repelling end for $g$.

The connection between endperiodic maps and our work here is the relationship between depth one foliations and compactified endperiodic mapping tori. The \defn{mapping torus} of an endperiodic map $g\colon L\to L$ is as usual defined as
\[
M_g=L\times [0,1]/(p,1)\sim (g(p),0).
\]
The result is an open 3-manifold. By work of Fenley, $M_g$ has a natural compactification \cite{fenley_end_1997}. We present the construction described by Field--Kim--Leininger--Loving in \cite{field2023end}. For each end $\mc E$, let $U_{\mc E}$ be an escaping neighborhood for $\mc E$. Define
\[
\mc U_\pm=\bigcup_{\mc E\in\text{Ends}_\pm}\bigcup_{k\geq0}g^{\pm k}(U_{\mc E}),
\]
where $\text{Ends}_+$ is the set of attracting ends and $\text{Ends}_-$ is the set of repelling ends.

Let $\wh M_g$ be the infinite cyclic cover corresponding to unwinding the circular base of the fiber bundle, so
\[
\wh M_g\cong L\times (-\infty,\infty).
\] 
The action by $\langle g\rangle$ is given by
\[
g\cdot (p,t)=(g(p),t-1)
\]
under these coordinates. Define a partial compactification $\wt M_g$ of $\wh M_g$ by
 \[
 \wt M_g=\mc U_-\times\{-\infty\}\:\cup\:\wh M_g\;\cup\;\mc U_+\times\{\infty\},
 \]
 viewed as living inside the compactification
 \[
 L\times[-\infty,\infty].
 \] The action of $\langle g\rangle$ on $\wh M_g$ extends to $\wt M_g$ by declaring $\pm\infty+1=\pm\infty$ and noting that $\mc U_+$ and $\mc U_-$ are $g$-invariant. Then the compactification $\overline{M_g}$ of $M_g$ is given by the quotient
 \[
 \overline{M_g}=\wt M_g/\langle g\rangle.
 \]
 The boundary is naturally partitioned into 
 \[
 \partial_\pm\overline{M_g}=\mc U_\pm/\langle g\rangle,
 \]
 and is comprised of ideal endpoints of positively (resp. negatively) escaping orbits. 

The compactified mapping torus admits a natural depth one foliation whose depth zero leaves are the boundary components and whose depth one leaves are the fibers of the endperiodic monodromy. In fact, every depth one foliation arises this way (after possibly blowing down interval bundles of compact leaves). 

\begin{lemma}\label{lem:depth one is endperiodic}
    Let $U$ be a depth one region of a depth one foliation, and let $\wh U$ be the path closure of $U$. Then $\wh U$ with its foliation is homeomorphic to a compactified endperiodic mapping torus with its natural foliation. A representative of the monodromy is given by the first return map of any flow transverse to the foliation.
\end{lemma}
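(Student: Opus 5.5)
We will prove the lemma by using an auxiliary transverse flow $\phi$ to exhibit $U$ as a mapping torus, and then checking that the completion $\wh U$ matches Fenley's compactification $\overline{M_g}$. First, $U$ is a locally top-depth component: all its leaves have depth one and its closure contains only compact leaves. So Lemma \ref{lem:top depth is fibered} shows that $\mc F|_U$ is a fibration over $S^1$ with fiber a depth one leaf $L$. We fix $\phi$ transverse to $\mc F$. Since the leaf space of $U$ is a circle and $\phi$ is positively transverse, the first return map $g\colon L\to L$ of $\phi$ is defined at every point of $L$. Indeed, the flowline through $p\in L$ moves monotonically in the circular leaf space. It cannot limit onto a leaf before completing a full turn, because a limit would force it to exit $U$ through $\partial\wh U$, and near $L$ that only happens at points we will show are escaping. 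Flowing along $\phi$ then gives a homeomorphism $U\cong M_g$ carrying leaves to fibers.

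The second step is to show that $g$ is endperiodic. For this we use the octopus decomposition of Lemma \ref{lem:depth1octopus}, in which $N=\wh U$ and the compact leaves $\partial_\pm\wh U$ are carried by $\partial_\pm N$. The standard description of depth one spiraling is as follows. Each end of $L$ has a neighborhood $E$ that spirals onto a compact boundary leaf $S\subset\partial_+\wh U$, say. This neighborhood is a union of juncture-bounded pieces $E=\bigcup_{i\ge0}E_i$, where each $E_i$ is obtained by cutting the infinite cyclic cover of $S$ associated to $j^+(S)$, and $E_{i+1}$ lies closer to $S$ along the transversal. Using the compatible octopus decomposition, we arrange $\phi$ near $S$ to be close to the normal direction. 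Then the return map sends $E_i$ onto $E_{i+p}$ for a fixed $p\geq 1$, up to a compact error. Hence $g^n(E')\subset E'$ for a suitable sub-neighborhood $E'$, and $\bigcap_k g^{kn}(E')=\varnothing$. This says the end is attracting for the ends spiraling on the positive side, and repelling for the negative side. There are finitely many ends because $\partial\wh U$ has finitely many components and each spiral contributes finitely many ends.

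The last step is to identify the completion. The positively escaping set $\mc U_+$ is exactly the set of points whose $\phi$-orbit leaves $U$ through $\partial_+\wh U$, or rather accumulates onto it. In the cyclic cover $L\times\R$, sending such an orbit to its endpoint in $\partial_+\wh U$ gives a map $\mc U_+/\langle g\rangle\to\partial_+\wh U$. This map is a homeomorphism, since every point of $\partial_+\wh U$ is hit by a backward flowline entering $U$. We do the same for $\mc U_-$. Gluing these identifications to $U\cong M_g$ gives $\overline{M_g}\cong\wh U$, carrying the natural depth one foliation to $\mc F_{\wh U}$.

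We expect the main obstacle to be the endperiodicity and continuity at the boundary. The issue is that $\phi$ is an arbitrary transverse flow rather than one adapted to the spiraling. We would handle this in two moves. First, we prove the statement for one adapted flow, namely a flow normal to the leaves near $\partial\wh U$ in the octopus charts. Then we observe that the return maps of any two transverse flows are isotopic through a leafwise isotopy supported by sliding along leaves. Endperiodicity is preserved by such an isotopy up to the "representative" phrasing of the lemma. If this route works, it also accounts for the lemma only claiming that the first return map is \emph{a representative} of the monodromy.
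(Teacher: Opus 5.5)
Your outline matches the paper's proof: you get the fibration from Lemma \ref{lem:top depth is fibered}, then read off endperiodicity from the spiraling near the compact boundary leaves. Your third step, matching $\mc U_\pm/\langle g\rangle$ with $\partial_\pm\wh U$ by following escaping orbits to their exit points, is more explicit than anything the paper writes.

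\textbf{The gap: arbitrary transverse flows.} You prove endperiodicity only after arranging $\phi$ to be nearly normal near $\partial\wh U$. You then transfer it to other flows because their return maps are isotopic. That transfer is invalid, because endperiodicity is a property of the map, not of its isotopy class. For example, let $g$ be endperiodic with an attracting end $\mc E$. Pick points $x_k$ exiting $\mc E$, and pairwise disjoint, locally finite disks $D_k$ each containing $x_k$ and $g^{-1}(x_k)$. Let $\psi$ be a product of homeomorphisms supported in the $D_k$, isotopic to the identity, with $\psi(x_k)=g^{-1}(x_k)$. Then $g\circ\psi$ is isotopic to $g$ but fixes every $x_k$, so $\mc E$ is neither attracting nor repelling for it.

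Reading ``representative'' as ``up to isotopy'' does not rescue this. In the proof of Theorem \ref{thm:locallytopdepthclosingagain}, the lemma is used to assert that the actual return map $f_\phi$ of the given flow is endperiodic. This matters because the Landry--Minsky--Taylor comparison is stated for endperiodic maps, and it is periodic points of $f_\phi$ itself that give closed orbits of $\phi$.

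\textbf{The missing observation: no adaptation is needed.} Take any transverse $\phi$ and any compact leaf $S\subset\partial_+\wh U$. For small $\varepsilon>0$, $N=\phi^{[-\varepsilon,0]}(S)\cong S\times[-\varepsilon,0]$ is a collar in which $\phi$ is vertical and the leaves are transverse to the vertical segments. This is exactly the paper's ``take $N$ small enough so that $\varphi$ is trivial''.
\begin{itemize}
\item In $N$, the leaf $L$ meets each vertical segment in a sequence of heights accumulating only at $0$.
\item The return map sends each such point to the next one up on the same segment.
\item Hence $g(L\cap N)\subset L\cap N$.
\item Also $\bigcap_k g^k(L\cap N)=\varnothing$: a point of $g^k(L\cap N)$ has at least $k$ points of $L$ below it on its segment, but only finitely many lie below any given height $<0$.
\end{itemize}
Passing to a power that fixes each end spiraling onto $S$, this gives the attracting ends; the negative boundary is symmetric.

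The same picture also gives the right reason for your first step. The return map is defined on all of $L$ because any flow segment reaching $\partial\wh U$ from inside $U$ crosses $L$ infinitely often. It has nothing to do with escaping points.
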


\begin{proof}
    Let $U$ be as above with $L$ a leaf in $U$. By Lemma \ref{lem:top depth is fibered}, $U$ fibers over $S^1$ with fiber $L$. Let $\varphi$ be a flow transverse to the depth one foliation. What remains to be shown is just that the monodromy is endperiodic. 

    Each end of $L$ spirals onto a depth zero leaf $\Sigma$ in $\partial\wh U$ in a \defn{spiralling neighborhood}, as in \cite[Section 6]{CC81}. In particular, in some regular neighborhood $N\cong \Sigma\times I$ of $\Sigma$ in $\wh U$, the restriction of $L$ to $N$ looks like an oriented cut and paste of copies of $\Sigma$ of the form $\Sigma\times\{1/n\}$ for $n\in\N$ with an annulus $\gamma\times\I$ for $\gamma$ some essential simple closed curve in $\Sigma$. Take $N$ small enough so that $\varphi$ is trivial, and then from this picture one can see the endperiodic nature of the monodromy.
\end{proof}

We call an endperiodic map \defn{atoroidal} if it does not preserve a finite multicurve up to isotopy. This is equivalent to the mapping torus being atoroidal. As we are always working in an atoroidal manifold, all induced monodromies of taut, depth one foliations will be atoroidal.

Landry--Minsky--Taylor introduced a class of endperiodic maps called \defn{spun pseudo-Anosov maps} in \cite{landry2023endperiodic}. These are homeomorphisms defined as first-return maps of pseudo-Anosov suspension flows almost transverse to a given depth one foliation, embedded within some hyperbolic mapping torus. They show that spun pseudo-Anosov representatives exist in the homotopy class of any atoroidal, endperiodic map \cite[Theorem 6.1]{landry2023endperiodic} and that like pseudo-Anosovs, they have extremal dynamical properties within their homotopy class. The key results for us will be the following.

\begin{theorem}[Landry--Minsky--Taylor \cite{landry2023endperiodic}] Let $f\colon L\to L$ be spun pseudo-Anosov, and let $g$ be an endperiodic map homotopic to $f$. Then for all $n\geq 0$, $g$ has at least as many periodic points of period $n$ as $f$.  
\end{theorem}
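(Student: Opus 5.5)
The approach is the one used for pseudo-Anosov maps in the finite-type setting: Nielsen fixed point theory, made to work on the compactified mapping torus. Here $L$ is an infinite-type surface. The plan is to count periodic points by fixed point index, class by class, and to show that this count is a homotopy invariant. For each $n\geq 1$ I would work with $g^n$ and $f^n$, which are again endperiodic and homotopic. I would use the Landry--Minsky--Taylor description of $f$ as the first return map of a pseudo-Anosov flow $\psi$ to a depth one leaf inside a compact mapping torus $\overline{M_f}$. In this description the periodic points of $f$ of period $n$ are the transverse intersections of closed orbits of $\psi$ with the leaf $L$.

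The first step is to set up Nielsen classes for the endperiodic map $g^n$. Lift to the universal cover $\widetilde L$. For each lift $\tilde g$ of $g^n$, take the fixed point set $\mathrm{Fix}(\tilde g)$; its projection is a fixed point class of $g^n$. Because $g$ is endperiodic, the escaping neighborhoods are free of fixed points. So each fixed point class is compact, and it has a well-defined Lefschetz index. I would then show that the index of the class corresponding to a lift is invariant under homotopy, in the sense of a compactly supported homotopy or a homotopy through endperiodic maps. The argument would compactify $\widetilde L$ using its ends/circle at infinity and apply the Lefschetz fixed point theorem to that compactification, as described in the introduction. The same step should show that lifts correspond bijectively under homotopy, since the lifts are determined by their actions on $\pi_1$. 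Hence the number of essential classes, meaning classes of nonzero index, is a homotopy invariant. Call this number $N(g^n)$.

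The second step is to show that $f$ is extremal for this count. For each periodic point $x$ of $f$ of period $n$, I claim the Nielsen class of $x$ consists of $x$ alone and has index $\pm1$, or a nonzero index at pronged singularities. This should follow from the standard pseudo-Anosov argument. Suppose two fixed points lie in the same class. Then they are joined by an arc that $f^n$ maps to a homotopic arc, which gives a homotopy in $\overline{M_f}$ between distinct closed orbits of $\psi$, or a nullhomotopic periodic orbit. Pseudo-Anosov flows have neither: transverse orbits in the lifted orbit space are distinct and no orbit is nullhomotopic. The index computation is local and is the same as in the pseudo-Anosov case. Thus $N(f^n)$ equals the number of period-$n$ points of $f$. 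Since $N(g^n)=N(f^n)$ and each essential class of $g^n$ contains at least one fixed point, $g$ has at least as many period-$n$ points as $f$.

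I expect the main obstacle to be the homotopy invariance in the first step. The surface $L$ is noncompact, and a homotopy between $f$ and $g$ need not be proper or compactly supported. So fixed points could a priori escape out the ends, or appear from the ends, during the homotopy. My first attempt would be to use the standard form of endperiodic maps to homotope $g$ so that it agrees with $f$ on the escaping neighborhoods $\mathcal U_\pm$, up to a compactly supported homotopy. Then I would apply classical Nielsen theory on a compact core, with boundary conditions coming from the attracting and repelling ends. This requires showing that homotopic endperiodic maps have conjugate behavior at the ends, which I expect to hold because the juncture data is determined by the homotopy class.
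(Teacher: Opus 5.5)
\textbf{Verdict: there is a genuine gap.} The paper does not prove this statement. It imports it from Landry--Minsky--Taylor, and its introduction only indicates the mechanism: the Lefschetz fixed point theorem applied to the compactified universal cover of the leaf. Your strategy is in that spirit: count essential fixed point classes indexed by lifts, and show that each periodic point of $f$ is an essential singleton class. However, the step that carries all of the infinite-type difficulty is not established.

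The gap is homotopy invariance in your first step. On a noncompact surface the index of a fixed point class is not a homotopy invariant, because a homotopy need not be proper and fixed points can escape out an end. For example, the saddle $(x,y)\mapsto(2x,y/2)$ and a translation of $\R^2$ are isotopic, yet one has a fixed point of index $-1$ and the other has none.

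Endperiodicity of $g$ constrains only the endpoint of the homotopy, not the homotopy itself. So neither ``compactly supported'' nor ``through endperiodic maps'' is available a priori. The latter would not suffice anyway, since escaping neighborhoods need not be uniform along the path.

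Your repair is to make $g$ agree with $f$ on escaping neighborhoods up to a compactly supported homotopy. This rests on the claim that homotopic endperiodic maps have conjugate end behavior via a homeomorphism isotopic to the identity. You give no argument for this beyond ``juncture data is determined by the homotopy class'', and that claim is exactly the nontrivial content.

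Passing to $\widetilde L\cup S^1_\infty$ does not prove homotopy invariance on $L$; it is meant to replace it. It does give you two things essentially for free:
\begin{itemize}
\item Corresponding lifts $\tilde f,\tilde g$ of $f^n,g^n$ induce the same homeomorphism of $S^1_\infty$, since the boundary action is determined by the action on endpoints of axes, i.e.\ by $f_*=g_*$.
\item $\mathrm{Fix}(\tilde g)$ is compact. Fixed points of $g^n$ lie outside the escaping sets, and by atoroidality distinct fixed points of one lift cannot project to the same point.
\end{itemize}
What remains is to show that the fixed points of $\bar g$ on $S^1_\infty$ contribute the same indices as those of $\bar f$, so that the interior index of $\tilde g$ equals that of $\tilde f$. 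That boundary index depends on how $\tilde g$ moves interior points near $S^1_\infty$, not only on the boundary map. On an infinite-type surface, $\tilde g$ need not lie at bounded distance from $\tilde f$. So the compact-surface argument does not transfer without genuine control of $g$ near the ends, and your proposal does not supply it.

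A second, smaller issue is your reason that each periodic point of $f$ is its own Nielsen class. You appeal to the claim that pseudo-Anosov flows have no distinct freely homotopic closed orbits, but this is false in general: chains of lozenges produce exactly such pairs. What you actually need is that a lift of $f^n$ to $\widetilde L$ fixes at most one point. This must come from the specific structure of spun pseudo-Anosov maps, for instance their invariant singular foliations on $L$ or the way $\widetilde L$ sits in the orbit space of the lifted flow. It does not follow from general properties of pseudo-Anosov flows.
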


A \defn{translation} is an endperiodic map such that every point is both positively and negatively escaping. The following result characterizes translations in terms of their periodic points.

\begin{theorem}[Landry--Minsky--Taylor \cite{landry2023endperiodic}] Let $f$ be a spun pseudo-Anosov map. Then $f$ has no periodic points if and only if $f$ is a translation.
\end{theorem}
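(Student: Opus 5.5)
The easy direction is immediate from the definitions. If $f$ is a translation, every point is positively escaping. A point $p$ with $f^n(p)=p$ has a bounded forward orbit $\{p,f(p),\dots,f^{n-1}(p)\}$. That orbit cannot leave every compact set, let alone leave through an attracting end. So a translation has no periodic points, and the remaining work is the converse: a spun pseudo-Anosov $f$ that is not a translation must have a periodic point.

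Suppose $f$ is not a translation. I will first produce a nonempty compact invariant set. Let $\Lambda_+$ be the set of points that are not positively escaping, and $\Lambda_-$ the set of points that are not negatively escaping. Both are closed and $f$-invariant. I will then argue that $\Lambda=\Lambda_+\cap\Lambda_-$ is nonempty and compact, using the escaping neighborhoods $\mc U_\pm$.
\begin{itemize}
\item \emph{Compactness.} The complement of $\mc U_+\cup\mc U_-$ in a neighborhood of the ends is empty, so $\Lambda$ avoids a neighborhood of every end. Hence $\Lambda$ lies in a compact core $K\subset L$.
\item \emph{Nonemptiness.} Take a point $q$ that is not, say, positively escaping. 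Its forward orbit returns infinitely often to a fixed compact core $K$. Any accumulation point of $f^{n}(q)\in K$ lies in $\Lambda_+$. To land in $\Lambda_-$ as well, I take the $\omega$-limit set of $q$ inside $K$, which is nonempty, compact and invariant. An invariant compact set cannot contain negatively escaping points, since their backward orbits leave $K$. So $\omega(q)\subset\Lambda$.
\end{itemize}
In particular $\Lambda$ contains a recurrent point $x$, for instance any point of a minimal subset of $\omega(q)$.

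Next I pass to the flow. By definition, $f$ is the first-return map to a depth one leaf $L$ of a pseudo-Anosov flow $\psi$ on a closed hyperbolic manifold, with $\psi$ almost transverse to the depth one foliation. The $\psi$-orbit of $x$ is recurrent and stays inside the region swept out by $\Lambda$, which is a compact set $C$ in the interior of the depth one region $U$. In particular $C$ is disjoint from the depth zero leaves. By the shadowing / closing lemma for pseudo-Anosov flows (via a Markov partition), a long, nearly closed segment of the orbit of $x$ is shadowed by a genuine closed orbit $\gamma$ of $\psi$, and $\gamma$ lies in an arbitrarily small neighborhood of $C$. Choosing that neighborhood inside $U$ and away from the compact leaves, $\gamma$ meets $L$ and never exits $U$. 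Its intersection points with $L$ are therefore permuted by the first-return map $f$, which gives a periodic point of $f$.

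The main obstacle is the last step: making sure the shadowing closed orbit really is a periodic orbit of $f$. It must not cross the depth zero leaves, and it must not be one of the finitely many singular or boundary-parallel orbits introduced by the almost-transverse (dynamic blow-up) structure. I would handle this by choosing the Markov partition fine relative to the distance from $C$ to $\partial\wh U$, together with the blown-up annuli. If that proved too delicate, I would fall back on a Lefschetz/Nielsen argument on the compactified mapping torus, showing that the essential fixed-point class of some power $f^n$ carried by $\Lambda$ has nonzero index, which forces a periodic point.
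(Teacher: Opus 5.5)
The paper does not prove this statement. It quotes it from Landry--Minsky--Taylor and uses it only as a black box, via \cite[Lemma 5.13]{landry2023endperiodic}, in the proof of Theorem \ref{thm:locallytopdepthclosing}. Your argument is therefore an independent, purely dynamical route, and in outline it works.

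The easy direction is fine. So is the reduction to a recurrent point $x$ of the compact invariant set $\Lambda=\Lambda_+\cap\Lambda_-$, with one small repair. A point $q$ that is not positively escaping does not merely return infinitely often to the core $K$: its forward orbit eventually stays in $K$. It never enters an attracting escaping neighborhood, and once it leaves a repelling neighborhood $V$ it never re-enters, since $f^{-n}(V)\subset V$ gives $f^{n}(L\setminus V)\subset L\setminus V$. This is what makes $\omega(q)$ a compact subset of $K$, which your argument for $\omega(q)\subset\Lambda_-$ needs. Read as $\omega(q)\cap K$, ``$\omega(q)$ inside $K$'' need not be invariant.

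Compared with the cited result, your route needs much less. The cited result sits inside the structure theory of spun pseudo-Anosov maps, which also supplies the minimality of periodic point counts that the paper uses. You need only that $f$ is a first-return map of a pseudo-Anosov flow up to blowup, plus a closing lemma.

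The step you flag as the main obstacle should be organized differently. Since $\psi$ is only almost transverse to the foliation, $f$ is the first-return map of a dynamic blowup $\psi^\sharp$. So the orbit of $x$ is a $\psi^\sharp$-orbit, and the closing lemma does not apply to it directly. You must pass through the collapse map $\pi$, which sends $\psi^\sharp$-orbits to $\psi$-orbits and is injective off the blowup complex $B$. Here is one way to do it.

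\begin{enumerate}
\item Let $C_x$ be the $\psi^\sharp$-orbit closure of $x$, a compact invariant set in the interior of $\overline{M_f}$.
\item If $C_x$ meets $B$, you are done. Every orbit in $B$ is closed or forward asymptotic to a closed orbit of $B$, so $C_x$ already contains a closed orbit.
\item Otherwise $\pi^{-1}(\pi(C_x))=C_x$. By compactness, the $\pi$-preimage of a sufficiently small neighborhood of $\pi(C_x)$ lies in a prescribed neighborhood of $C_x$ inside the interior of $\overline{M_f}$.
\item Apply the closing lemma for $\psi$ to the recurrent orbit of $\pi(x)$. It follows from Markov partitions, and it is classical for the pseudo-Anosov suspension flows in the paper's description of spun maps. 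This gives a closed orbit $\gamma$ in that neighborhood.
\item Every closed orbit of $\psi^\sharp$ in $\pi^{-1}(\gamma)$ then lies in the interior of $\overline{M_f}$, whether or not $\gamma$ was blown up.
\item Such an orbit meets $L$, because the interior fibers over $S^1$ with fiber $L$ and the flow is positively transverse to the fibers.
\end{enumerate}

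So it does not matter whether the shadowing orbit is singular or blown up, and no special refinement of the Markov partition is needed. The Lefschetz fallback, which as stated is too vague to check, can be dropped.
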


We also record the following lemma characterizing translations in terms of their mapping tori.

\begin{lemma}\label{lem:translation mapping torus}
	An endperiodic map $f\colon L\to L$ is homotopic to a translation if and only if 
	\[
	\overline{M_f}\cong \Sigma\times I,
	\]
	for $\Sigma$ a closed surface.
\end{lemma}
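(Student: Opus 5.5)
We prove the two directions separately, working throughout with the compactified mapping torus $\overline{M_f}$ and its natural depth one foliation, whose depth zero leaves are $\partial_\pm\overline{M_f}$.

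\emph{Translation implies product.} First reduce to $f$ itself a translation: homotopic endperiodic maps have homeomorphic compactified mapping tori, because a homotopy between endperiodic maps can be promoted to an isotopy respecting the ends, and this induces a homeomorphism of the compactifications. So suppose every point of $L$ is both positively and negatively escaping under $f$. Then every orbit of the suspension flow on $M_f$ limits in backward time to a point of $\partial_-\overline{M_f}$ and in forward time to a point of $\partial_+\overline{M_f}$. Since escaping neighborhoods can be taken uniformly near the ends and the escape is uniform on compact pieces, these orbits fiber $\overline{M_f}$ by compact intervals. Each interval runs from $\partial_-\overline{M_f}$ to $\partial_+\overline{M_f}$, and $\partial_-\overline{M_f}$ is a transverse section meeting every interval exactly once. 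This gives $\overline{M_f}\cong \partial_-\overline{M_f}\times I$. Finally, $\partial_-\overline{M_f}=\mc U_-/\langle f\rangle$ is compact: a fundamental domain for the action on $\mc U_-$ is $\overline{U_{\mc E}\setminus f^{-1}(U_{\mc E})}$, summed over the finitely many repelling ends, and it is compact. So $\Sigma=\partial_-\overline{M_f}$ is a closed surface.

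\emph{Product implies homotopic to a translation.} Suppose $\overline{M_f}\cong\Sigma\times I$. The product structure gives an interval fibration, but it is not a priori transverse to the depth one foliation. I would first pick any flow $\psi$ transverse to the natural depth one foliation. Its first return map to $L$ is homotopic to $f$ by Lemma \ref{lem:depth one is endperiodic}. The aim is to modify $\psi$ so that every orbit crosses from $\partial_-$ to $\partial_+$.

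For this I would invoke the Landry--Minsky--Taylor spun pseudo-Anosov representative $g\simeq f$, which is atoroidal because $M$ is. If $g$ had a periodic point, then by the first Landry--Minsky--Taylor theorem above every map homotopic to $g$ would have a periodic point. So it suffices to exhibit one map homotopic to $f$ with no periodic points; then $g$ has none, and by the second Landry--Minsky--Taylor theorem $g$ is a translation.

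To build a periodic-point-free map homotopic to $f$, use the product structure. The depth one leaf $L$ lies in $\Sigma\times(0,1)$ and is properly embedded there, since it accumulates only on the boundary leaves. Take the vertical flow on $\Sigma\times I$. Its first return map to $L$ is not defined, since vertical lines may miss $L$ or meet it several times. Instead I would argue by homotopy: the mapping torus $M_f$ deformation retracts onto a neighborhood of a compact core, and the product structure shows $\pi_1(M_f)\cong\pi_1(\Sigma)$ with $L$ inducing a map $\pi_1(L)\to\pi_1(\Sigma)$. This forces the Lefschetz/Nielsen counts of all iterates of $f$ to vanish. Concretely, every homotopy class of closed curve in $M_f$ projects to zero in the $S^1$ direction, because $H_1(\Sigma\times I)$ carries no class dual to the fibration. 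Hence no closed orbit of any transverse flow can exist homotopically. The contrapositive of the Landry--Minsky--Taylor count then shows $g$ has no periodic points, and $g$ is a translation.

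The main obstacle is this last step: justifying that the product topology of $\overline{M_f}$ rules out periodic points of $g$. A periodic point of $g$ gives a closed orbit of the suspension flow that is positively transverse to $L$, so its class pairs positively with the class of the fibration. However, that class restricted to the interior must vanish, since $\overline{M_f}\cong\Sigma\times I$ retracts onto a boundary leaf on which the cohomology class dual to $L$ is zero: $L$ is separating-like relative to the boundary. Making the claim ``dual class vanishes'' precise, via the Poincaré–Lefschetz dual of $[L]$ in $H^1(\overline{M_f})$ restricted to the closed orbit, is what I would check carefully.
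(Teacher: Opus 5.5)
Your forward direction is correct and matches the paper's. Like you, the paper also uses without comment that $\overline{M_f}$ depends only on the homotopy class of $f$.

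The reverse direction has a genuine gap, and the step you single out as the main obstacle is in fact false.

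\begin{itemize}
\item The fibration $M_f\to S^1$ has connected fiber $L$. It is therefore surjective on $\pi_1$, and the pulled-back class $\omega\in H^1(M_f)$ is nonzero.
\item $M_f$ is the interior of $\overline{M_f}\cong\Sigma\times I$, and the inclusion of a boundary leaf $\Sigma\times\{0\}$ is a homotopy equivalence. So $\omega$ restricts to a nonzero class on each boundary leaf. Up to a nonzero multiple, this restriction is the juncture class of $L$ spiraling onto that leaf (compare Lemma \ref{lem:juncturesinproduct}).
\item Consequently it is not true that every closed curve in $M_f$ has trivial image in the $S^1$ direction. Nor does the class dual to $L$ vanish on the boundary. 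A closed orbit pairing positively with $\omega$ therefore produces no contradiction.
\end{itemize}

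In fact nothing in $\pi_1$ or $H_1$ can rule out periodic points. Let $t$ be a translation, and let $h$ be a homeomorphism supported in a disk containing $x$ and $t(x)$ with $h(t(x))=x$. Then $h\circ t$ is endperiodic, isotopic to $t$, and has a fixed point. The Landry--Minsky--Taylor minimality theorem only helps once you actually exhibit a periodic-point-free representative of the class, and your argument never does. Separately, the spun pseudo-Anosov representative requires $f$ to be atoroidal. There is no ambient $M$ in this lemma, so that hypothesis would have to be extracted from $\Sigma\times I$ itself.

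The missing ingredient is the one you identified and then set aside: a flow transverse to the depth one foliation whose orbits are the $I$-fibers of the product. The paper's proof uses exactly this, taking the vertical flow on $\Sigma\times I$ to be transverse. This is consistent with Lemma \ref{lem:juncturesinproduct}, where the depth one foliation on a topological product is identified, up to homeomorphism, with a sheared product foliation transverse to the vertical intervals.

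Every orbit of such a flow runs from $\partial_-$ to $\partial_+$, so its first return map to $L$ is a translation. By Lemma \ref{lem:depth one is endperiodic} that map is homotopic to $f$. Once this transversality is in hand, no Landry--Minsky--Taylor input is needed at all. Without it, your proof of the reverse direction does not go through.
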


\begin{proof}
	First assume $f$ as above is a translation. Then every point in $L$ is both positively and negatively escaping under $f$, so the suspension flow on $\overline{M_f}$ gives it an interval bundle structure. Thus, it must be a product. 
	
	On the other hand, if $\overline{M_f}$ is a product for some map $f$, $f$ must be homotopic to a translation  -- this can be seen by considering the vertical transverse flow and applying Lemma \ref{lem:depth one is endperiodic}.
\end{proof}

\subsubsection{Generalized endperiodic maps}
We will sometimes refer to endperiodic maps on surfaces with infinite endsets, as defined by Cantwell-Conlon-Fenley in \cite{cantwell_endperiodic_2021}. These arise naturally as first-return maps in locally top depth components of depth two or greater. We refer to these as \defn{generalized endperiodics} to avoid confusion. Generalized endperiodic maps have a \defn{soul}, i.e. a surface with boundary and finitely many ends on which the map restricts to an endperiodic map \cite[Proposition 13.30]{cantwell_endperiodic_2021}. 

\subsection{Proof of top-depth component theorem}

We are now ready to prove Theorem \ref{thm:locallytopdepthclosing}, which we restate as Theorem \ref{thm:locallytopdepthclosingagain} below. This will be the basis for the positive direction in the proof of Theorem \ref{thma:classification}, the classification theorem. 

\begin{theorem}\label{thm:locallytopdepthclosingagain}
	Suppose $\mc F$ is finite depth, and let $\phi$ be a transverse flow. If $\mc F$ contains a locally top depth component $U$ that is not a product region, then $\phi$ contains a closed orbit.
\end{theorem}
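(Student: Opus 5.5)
We argue by restricting $\phi$ to the sutured manifold $N$ provided by Lemma \ref{lem:depth1octopus} and reducing to a statement about periodic points of an endperiodic first-return map. Let $U$ be the given locally top-depth component and let $L$ be a leaf of $U$. By Lemma \ref{lem:top depth is fibered}, $\mc F|_U$ is a fibration over $S^1$ with fiber $L$. Choose $N$ as in Lemma \ref{lem:depth1octopus}, carrying $\wh U$ and compatible with $\phi$. The pullback foliation $\mc F_N$ is depth one, with depth zero leaves exactly $\partial_\pm N$. The pullback flow $\phi_N$ is transverse to it and tangent to the sutures. Every closed orbit of $\phi_N$ in the interior of $N$ maps to a closed orbit of $\phi$, since the immersion is injective on the interior and closed orbits cannot touch $\partial_\pm N$. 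So it suffices to find a periodic point of the first-return map of $\phi_N$ to a depth one leaf.

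First, we identify this return map. Take the depth one leaf $L_N = L \cap N$; this is the soul of $L$ in the sense of generalized endperiodic maps. Following the proof of Lemma \ref{lem:depth one is endperiodic}, applied to the depth one foliation $\mc F_N$ on $N$, the first-return map $g$ of $\phi_N$ is defined on the points of $L_N$ that return. The sutures $\partial_\pitchfork N$ are unions of flow segments, so every point of $L_N$ returns except those that escape through $\partial_+N$. Near $\partial_\pm N$ the leaf $L_N$ spirals in the standard way, so $g$ is endperiodic, possibly on a surface with boundary coming from the sutures. This is where I would invoke \cite[Proposition 13.30]{cantwell_endperiodic_2021} to say that $g$ is the restriction of the generalized endperiodic map to its soul. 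If $\partial_\pitchfork N\neq\varnothing$, I would double $N$ along the sutures, or equivalently cap off the boundary. This produces an honest endperiodic map on a boundaryless surface, and it introduces no periodic points outside the original.

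Second, we show $g$ is not homotopic to a translation. The compactified mapping torus of $g$ is $N$, or its double. If $g$ were homotopic to a translation, Lemma \ref{lem:translation mapping torus} would give $N \cong \Sigma\times I$. The interval bundle structure from $\phi_N$ on $N$, together with the product pieces $A_i$ of the octopus decomposition (on which $\phi$ is a dynamical product), would then make $\wh U$ a product. This contradicts the hypothesis that $U$ is not a product region. Since $M$ is atoroidal, $g$ is atoroidal, so by \cite[Theorem 6.1]{landry2023endperiodic} it is homotopic to a spun pseudo-Anosov map $f$. Because $f$ is not a translation, the Landry--Minsky--Taylor characterization gives $f$ a periodic point of some period $n$. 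The comparison theorem then gives $g$ at least one periodic point of period $n$. Its orbit under $\phi_N$ is a closed orbit in the interior of $N$, hence a closed orbit of $\phi$.

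The main obstacle is the case where $U$ has depth at least two, or more generally where $N$ has nonempty sutures. There one must check that the Landry--Minsky--Taylor results apply to the restricted map $g$. The return map is only partially defined near the sutures, and one must verify that the homotopy to the spun pseudo-Anosov representative, together with the Lefschetz-type periodic point count, is compatible with the boundary. I would first try to handle this by showing that flowlines entering through the annular sutures are in a product region and hence wander. This would let us replace $N$ by a compactified mapping torus of an honest endperiodic map without changing the set of periodic points. The step "not a product region $\Rightarrow$ not a translation" also needs the octopus pieces $A_i$ to be glued back carefully.
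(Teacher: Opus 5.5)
Your outline follows the paper's route: the sutured manifold $N$ of Lemma \ref{lem:depth1octopus}, the endperiodic first-return map, doubling, a spun pseudo-Anosov representative, the dichotomy ``periodic point or translation'', and Lemma \ref{lem:translation mapping torus} to exclude translations. However, the step ``Since $M$ is atoroidal, $g$ is atoroidal'' is a genuine gap, and it hides a case your argument cannot handle. If $\mathcal F$ is not taut, then (since $M$ is atoroidal) it has a Reeb component. The interior of a Reeb component is itself a locally top-depth component that is not a product region. Its depth one leaves are planes, so the return map is not endperiodic in the sense Landry--Minsky--Taylor require (finitely many ends, each accumulated by genus), and none of their results apply. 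More generally, without tautness $U$ need not be $\pi_1$-injective in $M$. So a multicurve preserved by $g$ up to isotopy gives no contradiction with atoroidality of $M$. The paper first disposes of the non-taut case with Lemma \ref{lem:reebcomponentforces}: the return map on a planar leaf sends a large disk $D_0\cup A_1\cup\dots\cup A_N$ into itself, so Brouwer gives a fixed point. Only then does it use tautness to get $\pi_1$-injectivity, and hence that $U$ is atoroidal.

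Even in the taut case, atoroidality must be checked for the map you actually feed into Landry--Minsky--Taylor, which is the doubled map. A priori a periodic multicurve could cross the doubling locus, and you also need every end of the doubled surface to be nonplanar. This is exactly the ``main obstacle'' you flag but leave unresolved. The paper's fix is concrete. Along each noncompact boundary line of $L$ (which runs from a repelling end to an attracting end), insert an infinite strip of genus, extend $g$ there by a translation, and then double. A periodic curve lying in one copy of $L$ contradicts atoroidality of $U$. A curve meeting a strip, or meeting a compact boundary circle, is ruled out because iteration pushes it off a fixed test curve in the strip (respectively, off that wandering circle), so the intersection number eventually vanishes. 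Periodic points of the doubled map then correspond two-to-one to those of $g$. Finally, capping off boundary circles is not ``equivalent'' to doubling. It changes the compactified mapping torus, so it would need its own justification for both the non-translation step and the atoroidality step.
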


We first cover the case where $\mc F$ contains a Reeb component as a separate lemma below. Since $M$ is atoroidal, this is equivalent to $\mc F$ not being taut.

\begin{lemma}\label{lem:reebcomponentforces}
    If $\mc F$ has a Reeb component, every flow $\phi$ transverse to $\mc F$ has a closed orbit contained in each Reeb component.
\end{lemma}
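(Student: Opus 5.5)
Let $R \cong D^2 \times S^1$ be a Reeb component of $\mc F$, with boundary torus $T = \partial R$ a compact leaf. The interior of $R$ is foliated by planes, each spiraling onto $T$. Since $\phi$ is transverse to $\mc F$ and $T$ is a leaf, $\phi$ points either into $R$ along all of $T$ or out of $R$ along all of $T$, because the coorientation of $T$ is constant. Replacing $\phi$ by its time-reversal if necessary (this reverses the coorientation but preserves closed orbits), we may assume $\phi$ points into $R$. Then $R$ is forward invariant: every forward orbit starting in $R$ stays in $R$.

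The plan is Novikov's argument in flow form. First I would show that $\phi$ cannot be transverse to $\mc F|_R$ with every orbit leaving every compact subset of the interior leaves. Concretely, take an interior planar leaf $P$, which is a properly embedded disk in the open solid torus with closure containing $T$. Consider a meridian disk $D$ of $R$ whose boundary lies in $T$, put in general position with respect to $\mc F|_R$.

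Second, I would use the flow to build a return map. The standard fact is that any closed transversal to a Reeb component must be homotopically trivial in $R$; equivalently, a transverse flow in $R$ has no closed orbit meeting $P$ twice. Instead of a closed orbit directly, I would use the leaf space of $\mathrm{int}(R)$. The leaf space of the interior planes is $\R$, and $R \setminus T$ fibers over $\R$ by the plane leaves. A transverse flow is monotone in this $\R$-coordinate. Since $\phi$ points inward, flowing forward takes one toward the direction of increasing depth into the component, so it moves away from $T$.

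The key step is a Poincaré--Bendixson type argument. Each plane leaf $P_s$, $s\in\R$, is a disk, and the forward flow gives a map from $P_s$ into $P_{s'}$ for $s'>s$, defined wherever the orbit crosses. Because $R$ is compact and forward invariant, the $\omega$-limit set of any orbit is a nonempty compact invariant subset of $R$. It is disjoint from $T$, since orbits move inward and $\phi$ is transverse to $T$. Using the $\R$-coordinate, which is monotone along orbits, orbits accumulate only near the "core" direction where $s\to\infty$.

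There, the planes $P_s$ converge as $s\to \infty$ to $T$ from the other side: the Reeb foliation's planes are parametrized so that both ends of $\R$ correspond to accumulation on $T$. This contradicts monotonicity. The resolution is that the first-return map to a fixed plane $P$ is defined on a compact subdisk. Explicitly, orbits entering $R$ cross each plane once, and the forward-invariant compact region trapped inside a large disk of $P$ gives a continuous self-map of a closed disk $D_0\subset P$. Brouwer then gives a fixed point, which is a closed orbit in $R$.

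The main obstacle is making this precise: showing that the forward flow induces a continuous map from a closed disk in a plane leaf into itself. This requires controlling how the spiraling planes near $T$ are permuted by the flow. The cleanest approach I would try first is the following. Cut $R$ along a compact leafwise-disk $D_0 \subset P$ whose boundary is a circle very close to $T$, parallel to a longitude's transverse pushoff. Then use the fact that $\phi$ near $T$ is nearly parallel to a fixed transverse direction, so points of $\partial D_0$ return inside $D_0$. After that, Brouwer's fixed point theorem finishes the argument.
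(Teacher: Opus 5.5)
Your endgame matches the paper's: take the first-return map of $\phi$ to a plane leaf $P$, find a closed disk that it maps into itself, and apply Brouwer. But there is a genuine gap: the proposal never produces that invariant disk, and the model of the Reeb component you use along the way is wrong.

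\textbf{The leaf space is $S^1$, not $\R$.} The interior of $R$ is an open solid torus with $\pi_1\cong\Z$. Its plane leaves are the fibers of a fibration over $S^1$; in the standard model this is $(x,t)\mapsto t-f(|x|)\bmod 1$. If the leaf space were $\R$ and $\phi$ were monotone in it, there could be no closed orbit at all. That is exactly the contradiction your third and fourth paragraphs run into. For the same reason, ``orbits entering $R$ cross each plane once'' is backwards. The first-return map $f\colon P\to P$ exists precisely because every forward orbit crosses $P$ infinitely often. Your correct observation that $\omega$-limit sets avoid $T$ is the ingredient that proves this. Forward orbits eventually stay in a compact subset of $\mathrm{int}(R)$, where transversality to the fibration over $S^1$ forces them to keep winding around. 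Hence $f$ is defined on all of $P$, continuous, and injective.

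\textbf{The key step is missing, and your suggested construction fails.} The step you flag as ``the main obstacle'' is the entire content of the lemma. A circle in a plane leaf bounds a disk in that leaf, so it is null-homotopic in $R$. It therefore cannot be parallel to a longitude, which generates $\pi_1(R)$. The right curve is the meridian $\gamma\subset T$, i.e.\ the Poincar\'e dual of the juncture class.

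The paper cuts $P$ along its intersection with a thin flowout annulus $\phi^{[0,\varepsilon]}(\gamma)$. This annulus is automatically transverse to $\mathcal F$, since it contains the flow direction. The cut splits $P$ into a compact disk $D_0$ and annuli $A_1,A_2,\dots$ stacked as sheets in the collar $\phi^{[0,\varepsilon]}(T)$. Because $\phi$ points into $R$, a point of $A_N$ flows along its collar fiber to the next intersection with $P$, which lies on the adjacent sheet $A_{N-1}$. So $f$ takes $A_N$ to $A_{N-1}$ for $N$ large. Consequently $f(\partial D)\subset D$ for $D=D_0\cup A_1\cup\dots\cup A_N$. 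Since $f$ is an embedding of the plane, Jordan--Schoenflies then gives $f(D)\subset D$, and only then does Brouwer apply.

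Your phrase ``$\phi$ near $T$ is nearly parallel to a fixed transverse direction'' does not substitute for this sheet-by-sheet description. Even granting that $\partial D_0$ returns inside $D_0$, you would still need the injectivity and Jordan curve step to conclude $f(D_0)\subset D_0$.
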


\begin{proof}
	Suppose $\mc F$ contains a Reeb component. Let $\phi$ be a flow transverse to $\mc F$, and let $L$ be a planar leaf in the interior of the Reeb component. Observe $L$ is a fiber of a plane bundle over $S^1$. By reversing the direction of $\phi$ if necessary, we assume that the flow points into the Reeb component along its boundary. The flow $\phi$ defines a first return map $f: L \to L$. This is not an endperiodic map as $L$ has only one end; however, $f$ still has controlled end behavior. 
	
	Let $\gamma$ be a simple closed curve in the boundary torus $T$ of the Reeb component which is Poincar\'e dual to the juncture class. Then cutting $L$ along a small flowout annulus of $\gamma$ along $\phi$ separates the complement of a compact disk $D_0$ in $L$ into an infinite union $\{A_i\}$ of annuli glued along their boundaries, which we order by distance from $D_0$. Moreover, for an annulus $A_N$ sufficiently far from $D_0$, $f$ takes $A_N$ to $A_{N-1}$, and so defines a map on the compact disk $D = D_0 \cup A_1 \cup ... \cup A_N$. This map has a fixed point by Brouwer's fixed point theorem \cite{brouwer1911abbildung}. This gives the desired periodic orbit of $\phi$.
\end{proof}

\begin{proof}[Proof of Theorem \ref{thm:locallytopdepthclosing}]
Suppose $\mc F$ is finite depth, and let $\phi$ be a transverse flow. Since $M$ is atoroidal, if $\mc F$ is not taut, then it contains a Reeb component. In this case, Lemma \ref{lem:reebcomponentforces} applies and produces a closed orbit of $\phi$.
	
Suppose instead that $\mc F$ is taut and contains a locally top depth component $U$ that is not a product region. Note that since $\mc F$ is taut, the inclusion of $U$ into $M$ is $\pi_1$-injective. Since $M$ is atoroidal, $U$ must then be as well.	
	
Let $i:N \to M$ be the sutured manifold from Lemma \ref{lem:depth1octopus}. Let $L$ be a leaf of $i^*\mathcal{F}$. Then $L$ is depth 1 in $N$, and by Lemma \ref{lem:depth one is endperiodic}, the pullback flow $\phi_N$ on $N$ defines an endperiodic first return homeomorphism $f_\phi: L \to L$. The flow $\phi$ has a closed orbit in the interior of $N$ if and only if $f_\phi$ has a periodic orbit.

The leaf $L$ will in general have boundary components, which may not be compact. If $L$ has a noncompact boundary component, that boundary component must have one end escape out an attracting end and the other escape out a repelling end. If so, insert an infinite strip of genus along each noncompact boundary component to obtain a surface $L'$. Extend the homeomorphism $f_\phi$ to each strip of genus via a translation map to obtain a map $f'_\phi$. If $L$ has no noncompact boundary components, let $L'=L$ and $f'_\phi=f_\phi$.

Now double $L'$ along its boundary to obtain the infinite-type surface $DL'$. Note that $DL'$ has all ends nonplanar, and the doubled map 
\[
Df'_\phi\colon DL'\to DL'
\]
is endperiodic.

    We claim $Df'_\phi$ is atoroidal. Suppose it is not. Then there is a finite set of essential closed curves $\gamma_i$, $1 \leq i \leq k$ in $DL'$ which are permuted by $Df'_\phi$ up to isotopy. Put each $\gamma_i$ in minimal position with respect to $\partial L$, viewed as a system of essential simple closed curves and properly embedded arcs within $DL'$. Note that $\partial L$ decomposes $DL'$ into two copies of $L$, denoted $L_0$ and $L_1$, and a collection of infinite strips of genus.
    
    First note that no $\gamma_i$ can lie entirely in $L_0$ or $L_1$ inside $DL'$. This is because $Df'_\phi$ preserves $L_0$ and $L_1$, and a periodic curve in $L$ under $f_\phi$ would contradict the fact that $U$ is atoroidal.
    
    On the other hand, no $\gamma_i$ can intersect one of the infinite strips of genus. Suppose some $\gamma_i$ did, and let $\alpha$ be an auxiliary curve entirely inside the infinite strip and with positive geometric intersection number with $\gamma_i$. Since $Df'_\phi$ acts on the strip as a translation, for large $N$ we have 
    \[
    i(\alpha,Df'^N_\phi(\gamma_i))=0.
    \]
    Thus, $Df'^N_\phi(\gamma_i)$ and $\gamma_i$ cannot be homotopic.
    
    Finally, no $\gamma_i$ can intersect a compact boundary component of $L_0$ or $L_1$ inside $DL'$ for the same reason as above, with the boundary component taking the place of $\alpha$. This rules out all possibilities for a periodic curve $\gamma_i$, and we conclude that $Df'_\phi$ is atoroidal.

    Since $Df_\phi'$ is an atoroidal, endperiodic map on a surface with no boundary components and with all ends nonplanar, it is isotopic to some spun pseudo-Anosov map $g$ by \cite{landry2023endperiodic}. By \cite[Lemma 5.13]{landry2023endperiodic}, either $g$ has a periodic point  or $g$ is a translation. The latter case is precisely the case that $N$, and hence $U$, are topological products. By hypothesis we conclude $g$ has a periodic point, and by \cite{landry2023endperiodic} $Df'_\phi$ must also have a periodic point. Since the periodic points of $f'_\phi$ correspond 2-to-1 exactly to those of $f_\phi$, $f_\phi$ has a periodic point and we are done.
\end{proof}

We also record the following lemma, which says that forcing closed orbits is a closed property in the $C^0$ topology. Together with Theorem \ref{thm:locallytopdepthclosingagain}, this proves that if $\mc F$ is approximated by foliations with locally top depth components that aren't products, then $\mc F$ forces closed orbits.

\begin{lemma}\label{lem:approxedimpliesforcing}
	If $\mc F$ is $C^0$-approximated by foliations which force closed orbits, then $\mc F$ forces closed orbits.
\end{lemma}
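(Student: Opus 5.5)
The plan is to argue directly from the openness of transversality in the $C^0$ topology. Let $\phi$ be any flow transverse to $\mc F$, generated by a nonvanishing vector field $V$. We want to show that $\phi$ has a closed orbit. The key observation is that transversality of a fixed vector field to a plane field is an open condition with respect to the plane field. The hypothesis gives plane fields $T\mc F_i$ converging to $T\mc F$ in the $C^0$ topology. It is the same vector field $V$ that will be transverse to $\mc F_i$ for large $i$, so the flow $\phi$ itself is not modified at all.

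The first step is to quantify transversality. Fix a Riemannian metric on $M$. Since $M$ is compact and $V$ is continuous, nonvanishing and transverse to $T\mc F$, the angle between $V(x)$ and the plane $T_x\mc F$ is bounded below by some $\theta>0$, uniformly in $x\in M$. The angle is a continuous positive function on a compact set. In the same way, the coorientation pairing is bounded away from zero: if $\nu$ is the positive unit normal to $T\mc F$, then $\langle V,\nu\rangle \geq c>0$.

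The second step uses the convergence. Choose $i$ large enough that $T\mc F_i$ is within angle $\theta/2$ of $T\mc F$ at every point. Then $V$ makes angle at least $\theta/2$ with $T\mc F_i$ everywhere, so $V$ is transverse to $\mc F_i$. Coorientations are only defined up to sign, but they can be matched by continuity: for $i$ large, the unit normals $\nu_i$ of $\mc F_i$ can be chosen $C^0$-close to $\nu$, and then $\langle V,\nu_i\rangle>0$. Thus $\phi$ is positively transverse to $\mc F_i$ with respect to the coorientation induced from $\mc F$. Since $\mc F_i$ forces closed orbits, $\phi$ has a closed orbit. As $\phi$ was arbitrary, $\mc F$ forces closed orbits.

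The only point requiring care is the coorientation and sign convention. The notion ``forces closed orbits'' refers to flows positively transverse to the foliation, so we must make sure the approximating foliation is cooriented compatibly. This is handled by the continuity choice of $\nu_i$ above. Alternatively, if $V$ happens to be negatively transverse to $\mc F_i$, we can reverse time: reversing the flow preserves closed orbits, and $-V$ is positively transverse. Either way, no regularity of $\phi$ beyond continuity of $V$ is needed.
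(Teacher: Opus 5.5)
Your proof is correct and follows the paper's argument: fix a flow transverse to $\mathcal{F}$, use openness of transversality (which you make quantitative via a uniform angle bound on compact $M$) to see that the same flow is transverse to $\mathcal{F}_i$ for large $i$, and then invoke that $\mathcal{F}_i$ forces closed orbits. You also handle the coorientation issue correctly, by matching normals by continuity or by reversing time; the paper leaves this detail implicit.
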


\begin{proof}
	Let $\mc F$ be a $C^0$ foliation, and let $\{\mc F_i\}_{i\in N}$ be a sequence of foliations which force closed orbits such that $T \mc F_i$ converges to $T \mc F$ in the $C^0$ topology. Let $\phi$ be a flow transverse to $\mc F$. Since transversality is an open condition, past some $N\in \N$ we have that $\phi\pitchfork\mc F_i$. Since $\mc F_i$ forces closed orbits, $\varphi$ has a closed orbit.
\end{proof}

We can now prove Corollary \ref{cor:no holonomy}, restated as Corollary \ref{cor:no holonomy again}.

\begin{corollary}\label{cor:no holonomy again}
	If $\mc F$ is $C^2$ and has no holonomy, then $\mc F$ forces closed orbits.
\end{corollary}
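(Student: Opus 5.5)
The plan is to apply the classical structure theory for $C^2$ foliations without holonomy on closed 3-manifolds. Then, depending on the case, we either reduce to Thurston's fibered argument via Lemma \ref{lem:approxedimpliesforcing} or use Theorem \ref{thm:locallytopdepthclosing}.

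First, a $C^2$ foliation without holonomy has no Reeb components, since the boundary torus of a Reeb component has nontrivial one-sided holonomy. By Sacksteder's theorem, a $C^2$ foliation with an exceptional minimal set has a leaf with contracting holonomy, so there are no exceptional minimal sets. Imanishi's (and Novikov's) theorem then says that a $C^2$ codimension one foliation without holonomy on a closed manifold falls into one of two cases. Either all leaves are compact and $\mc F$ is a fibration over $S^1$, or every leaf is dense and $\mc F$ is topologically conjugate to a foliation defined by a closed nonsingular 1-form $\omega$ of irrational rank. Tischler's theorem, applied to $\omega$, gives closed 1-forms $\omega_i$ with rational periods and $\omega_i \to \omega$ in $C^0$. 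The foliations $\ker \omega_i$ are fibrations over $S^1$ with $T\ker\omega_i \to T\mc F$ in $C^0$.

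It therefore suffices to show that a fibration of the atoroidal manifold $M$ forces closed orbits, after which Lemma \ref{lem:approxedimpliesforcing} finishes the proof. For a fibration with fiber $\Sigma$, a transverse flow induces a first-return homeomorphism $f:\Sigma\to\Sigma$, and closed orbits correspond to periodic points of $f$.

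If $\Sigma$ has genus at least two, atoroidality makes the monodromy pseudo-Anosov. Thurston's theorem, quoted in the introduction, then gives periodic points, or equivalently we can apply the Lefschetz/Nielsen argument. If $\Sigma$ is a torus, $M$ is a torus bundle, which contains an incompressible torus (the fiber) and so is excluded. If $\Sigma$ is a sphere, $M = S^2\times S^1$, and $f$ is an orientation-preserving homeomorphism of $S^2$, which has a fixed point by Lefschetz. So in every case a transverse flow has a closed orbit.

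The compact case can also be seen as a special instance of Theorem \ref{thm:locallytopdepthclosing}. The whole of $M$ is a locally top-depth component, of depth $0$, and it is not a product region. The argument above, however, treats it directly.

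The main obstacle is justifying the regularity and conjugacy bookkeeping. Imanishi's theorem only gives a topological conjugacy to the kernel of $\omega$. Our notion of $C^0$-approximation concerns tangent plane fields, so we must check that forcing closed orbits is invariant under homeomorphism and then work with the model foliation. The invariance is clear, since a homeomorphism carries transverse flows to topologically transverse flows and preserves closed orbits. To make this work, I would fix the convention that transversality is tested in the smooth model, where $\ker\omega$ is smooth and the Tischler approximation is literal. An alternative is to run the argument of Lemma \ref{lem:approxedimpliesforcing} directly: for a flow transverse to $\ker\omega$, we have $\omega(V)>0$, so $\omega_i(V)>0$ for $i$ large, and $V$ is transverse to the fibration $\ker\omega_i$.
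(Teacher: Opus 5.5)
Your main line matches the paper's proof. The paper's proof is the chain ``Tischler's theorem gives $C^0$-approximating fibrations, atoroidality rules out torus fibers so these force closed orbits, and Lemma~\ref{lem:approxedimpliesforcing} finishes.'' You also spell out the Sacksteder--Imanishi reduction to a closed nonsingular $1$-form, which the paper folds into its citation. One minor correction: if ``atoroidal'' means no incompressible tori, it does not force pseudo-Anosov monodromy. A Seifert fibered space over $S^2(p,q,r)$ with Euler number $0$ fibers with periodic monodromy. So rely on your Lefschetz fallback instead: for fiber genus $\geq 2$ the numbers $L(g^k)=2-\mathrm{tr}(g_*^k)$ cannot all vanish, so some iterate of the return map has a fixed point.

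The step that does not work as written is your last paragraph, which is exactly the obstacle you flagged.
\begin{itemize}
\item Imanishi's conjugacy $h$ is only a homeomorphism. If $\phi$ is generated by $V$ transverse to $T\mc F$, the conjugated flow $h\phi h^{-1}$ is not generated by a vector field, so ``$\omega(V)>0\Rightarrow\omega_i(V)>0$'' says nothing about it.
\item Topological transversality to $\ker\omega$ is not stable under $C^0$-small changes of $\omega$. A local primitive of $\omega$ may increase arbitrarily slowly along a non-rectifiable arc.
\item Testing transversality ``in the smooth model'' therefore proves the statement for $\ker\omega$, not for $\mc F$.
\end{itemize}
The paper's proof passes over the same point by asserting directly that $\mc F$ is $C^0$-approximated by fibrations.

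A repair within your framework is to work with $\phi$ itself.
\begin{itemize}
\item Take a smooth closed $1$-form $\eta$ representing $h^*[\omega]$. Then $\int_{\phi^{[0,T]}(x)}\eta=\int_{h(\phi^{[0,T]}(x))}\omega+g(\phi^Tx)-g(x)$ for some continuous $g$ on $M$.
\item Local primitives of $\omega$ strictly increase along $h$-images of positive transversals. By compactness, the first term is at least $c\lfloor T\rfloor$ with $c>0$.
\item For $T$ large, set $\bar\eta=\eta+dK$ with $K(x)=\frac1T\int_0^T\int_{\phi^{[0,t]}(x)}\eta\,dt$. Then $\bar\eta(V)(x)=\frac1T\int_{\phi^{[0,T]}(x)}\eta>0$.
\item Tischler's rational approximation of $\bar\eta$ (after smoothing, if $\phi$ is only $C^1$) gives a fibration transverse to $\phi$, and your fibration argument finishes.
\end{itemize}
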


\begin{proof}
	Tischler's Theorem \cite{candel_foliations_1999} implies that $\mc F$ is $C^0$ approximated by fibrations; since $M$ is atoroidal, these fibrations are not fibrations by tori, and so they force closed orbits. Since $\mc F$ is a $C^0$-limit of foliations forcing closed orbits, Lemma \ref{lem:approxedimpliesforcing} implies $\mc F$ forces closed orbits.
\end{proof}

\subsubsection{Alternate proof}
Theorem \ref{thm:locallytopdepthclosing} works in $C^0$ regularity. However, when $\mc F$ is $C^2$, there is a slightly different approach which we sketch here. When $\mc F$ is $C^2$, the monodromy of a locally top-depth component of any depth is a generalized endperiodic.

The soul of this map is essentially the map constructed in Theorem \ref{thm:locallytopdepthclosing}. Since $\mc F$ has compact junctures, choosing the carrying sutured manifold $N$ large enough causes $\mc F$ to restrict to a foliation without holonomy on $\partial_\pitchfork N$, and so the endperiodic map defined in the interior of $N$ is defined on a surface with an infinite union of compact boundary components. Blowing these down to points gives an endperiodic map on which one can apply the results of Landry--Minsky--Taylor \cite{landry2023endperiodic} or the Handel--Miller Theory described in \cite{cantwell_endperiodic_2021}.

\section{Plugs and breaking closed orbits}\label{sec:plugs}

In this section we develop techniques to insert plugs transverse to foliations, thereby breaking closed orbits of transverse flows. The main technical results are Lemma \ref{lem:pluginsertion} and its slight variant Lemma \ref{lem:annularpluginsertion}, which show that plugs can be inserted along curves (or respectively, pairs of parallel curves) of attracting holonomy. These results are enough to prove Theorem \ref{thma:perturb}, the Instability Theorem, which we do in this section. Additionally, we describe the dynamics of the endperiodic first return maps defined by our plug insertions.

\subsection{Plugs Background}

For us, a \defn{plug} will be a sutured manifold $P \cong S \times [-1,1]$, where $S$ is a compact surface with boundary, together with a vector field $X$ on $P$. We set $\partial_+P = S \times \{1\}$, $\partial_-P = S \times \{-1\}$, and $\partial_vP = \partial S \times [-1, 1]$. We require that the pair $(P, X)$ satisfies the following criteria:
\begin{itemize}
    \item[(P1)] The vector field $X$ is positively tangent to the foliation by intervals in a neighborhood of $\partial_v P$ and positively (resp. negatively) transverse to $\partial_+P$ (resp. $\partial_-P$).
    \item[(P2)] There exists a nonempty set $K \subseteq \partial_-P$, the \defn{trapped set}, such that for any $(x, -1) \in K$, the forward orbit of $x$ under $X$ is entirely contained in the interior of $P$.
    \item[(P3)] For any $(y, -1) \in \partial_-P \setminus K$, the forward orbit of $(y, -1)$ exits $\partial_+P$ at $(y, 1)$.
\end{itemize}

A \defn{semiplug} satisfies the same conditions as a plug with the exception of (P3). Concatenating a semiplug with the semiplug obtained by replacing $X$ with $-X$ yields a plug. Our plug constructions will generally be obtained by inserting a semiplug above and below a leaf.

\medskip

The idea behind plugs is that they may be inserted in a manifold with a flow to modify that flow, hopefully by breaking all the closed orbits of that flow. The first condition is to ensure the plug lines up with the original flow at the boundary. The second condition implies that the plug breaks any closed orbits through $K$, and the third condition guarantees that the plug insertion doesn't create any new closed orbits.

\begin{remark}
    Our definition is slightly different from the one appearing in the literature, e.g. \cite{hurder_dynamics_2015}. The primary difference is that we have dropped the \defn{tameness condition}, that the plug embeds into $\R^3$ preserving the vertical direction at the boundary. Our plug insertions will not occur in small coordinate balls, and so this hypothesis is not necessary. Nonetheless, all known plugs satisfy this condition.
\end{remark}

\subsubsection{The Schweitzer plug}
We review the plug which forms the basis for our construction, the \emph{Schweitzer plug} \cite{Schweitzer}. First recall the construction of a Denjoy homeomorphism on the circle $S^1$, which goes as follows. Let $\alpha\in(0,1)$ be an irrational number, and let $\rho_\alpha$ be rotation by $\alpha$ on the circle, where the circle is parameterized as $\R/\Z$. Let $\wh\rho_\alpha$ be the  homeomorphism of $S^1$ obtained from $\rho_\alpha$ by blowing up the orbit through $0$. Then $\wh\rho_\alpha$ is a \defn{Denjoy homeomorphism}. See \cite[Section 12.2]{katok_introduction_1995} for details. 

We parametrize the mapping torus of $\wh \rho_\alpha$, together with the induced suspension flow, as follows. Start with the square 
\[
R=[-1,1]\times[-1,1]
\]
in $\R^2$, and let $X_0$ be the linear unit vector field of slope $\alpha$ on $R$. Identify the top and bottom of $U$, as well as the left and right sides, via translation to obtain a torus $T^2$ with a vector field, which we continue to call $V$. We continue to use $R$ coordinates on $T^2$. Now blow up the orbit through $(0,0)$ of the flow induced by $X_0$ to obtain a new vector field $\wh X_0$. Let $S^1$ be the vertical circle defined by setting the first coordinate of $T^2$ to $-1$. Then $\wh X_0$ is transverse to $S^1$, and the first return map of the induced flow is conjugate to $\wh\rho_\alpha$.

The vector field $\wh X_0$ has a compact minimal set $K_\alpha$ which is the complement of the flowlines that arose from blowing up the orbit through $(0,0)$. Let $U$ be the complement of $K_\alpha$ in $T^2$, so $U$ contains a neighborhood of $(0,0)$.

Let $S=T^2\ssm D$, where $D$ is a small open disk centered at $(0,0)$ and contained in $U$. Let 
\[
\psi: S \times [0, 1] \to [0, 1] 
\] 
be a $C^\infty$ bump function such that 
\[
\psi^{-1}(1) = K_\alpha \times \{1/2\}
\] 
and $\psi = 0$ in a neighborhood of $\partial(S \times [0, 1])$. Let $V$ be the vertical vector field $\frac{\partial}{\partial t}$, where $t$ is the $[0, 1]$ coordinate. Then the \defn{positive Schweitzer semiplug} $P^+_{\alpha, \psi}$ is $S \times [0, 1]$ with the vector field
\[X(y) = (1-\psi(y)) V(y) + \psi(y) \wh X_0(y).\]
We define the \defn{negative Schweitzer semiplug} $P^-_{\alpha, \psi}$ on $S \times [-1, 0]$ with
\[X(y) = (1 - \psi(-y))V(y) - \psi(-y)\wh X_0(y)\]
and the \defn{Schweitzer plug} $P_{\alpha, \psi} = P^-_{\alpha, \psi} \cup P^+_{\alpha, \psi}$. We will suppress the dependence on $\psi$ in the notation when it is not relevant. See Figure \ref{fig:plug} for an illustration of the semiplug.

\begin{figure}[ht]
    \centering
    \includegraphics[width=0.7\linewidth]{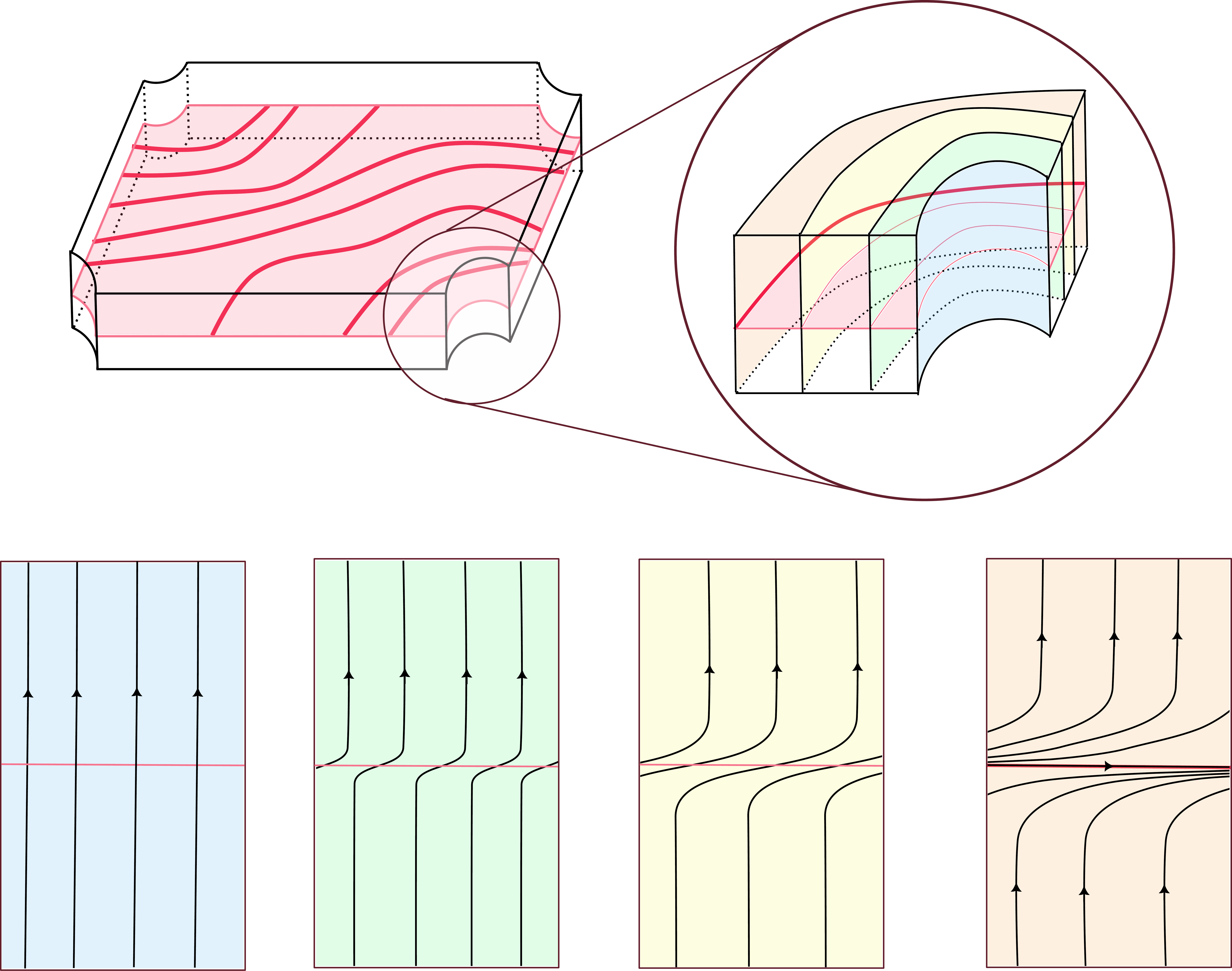}
    \caption{Top row: On the left, a Schweitzer semiplug with the minimal set $K$ in red. On the right, a zoomed in look at the corner with four `windows' highlighted in blue, green, yellow, and orange. The orange window intersects $K$ in an interval. Bottom row: The flow in the four windows, showing how the flow changes as we move toward the trapped set.}
    \label{fig:plug}
\end{figure}

In practice, to insert enough plugs to trap all closed orbits, we require plugs with the property that the trapped set $K$ has nonempty interior, so that we can apply compactness. For this purpose, we define the \defn{modified Schweitzer plug} $P_{\alpha, x, \psi'}$ for $x \in K \subset T^2 \setminus D$, to be the plug obtained by blowing up the orbit through $x$ under the flow induced by $\wh X_0$ and including the blown up orbit in $K$, and then proceeding as in the construction of the Schweitzer plug. This is the suspension of a Denjoy homeomorphism which has been blown up at a second point; a slight modification of the proof that $\hat{\rho}_\alpha$ is $C^1$ (see, e.g. \cite[Proposition 12.2.1]{katok_introduction_1995}) shows that the vector field on $P_{\alpha, x, \psi'}$ is $C^1$. Moreover, this blowup can be chosen very small so that the vector field of $P_{\alpha, x, \psi'}$ is $C^0$-close to that of $P_{\alpha, \psi}$.

\subsection{Plug Insertions}
Let $\phi$ be a flow on $M$ generated by a vector field $V$, and let $P$ be a plug. A \defn{plug insertion} of $(P, X)$ into $\phi$ is an embedding $i: P \to M$ such that $\phi$ is tangent to $X$ in a neighborhood of $\partial P$. By reparametrizing in a small neighborhood of $\partial P$, we obtain the \defn{plugged flow} $\phi_P$ defined by $V$ away from $P$ and $X$ inside $P$ (possibly multiplied by a function supported in a small neighborhood of $\partial P$). We define semiplug insertions similarly. The regularity of the plugged flow $\phi_P$ is the minimum of the regularity of $\phi$ and the regularity of $P$.

The goal of this section is to prove the following lemma, giving sufficient conditions to obtain a modified Schweitzer plug insertion transverse to a foliation $\mathcal{F}$.

\begin{lemma}[Plug Insertion]\label{lem:pluginsertion}
    Let $\phi$ be a flow transverse to $\mathcal{F}$. Let $\gamma_1$ and $\gamma_2$ be two oriented simple closed curves in a leaf $L$ of $\mathcal{F}$ such that:
    \begin{enumerate}
        \item $\gamma_1$ has two-sided attracting holonomy, and
        \item $\gamma_1$ and $\gamma_2$ intersect positively and transversely at a single point.
    \end{enumerate}
    Then for any neighborhood $U$ of $\gamma_1 \cup \gamma_2$ in $M$ for which $\phi$ is smooth, there is a modified Schweitzer plug insertion $i:P = P_{\alpha,x, \psi'} \to U$ such that the plugged flow $\phi_P$ is transverse to $\mc F$. Moreover, if $C$ is any closed disk contained in $U \cap L$, then $i$ can be chosen so that all orbits passing through $C$ are trapped by $P$.
\end{lemma}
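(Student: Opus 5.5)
The plan is to build the plug inside a thin flow-box neighborhood of the once-punctured torus $T_0 \subset L$ given by a regular neighborhood of $\gamma_1 \cup \gamma_2$. Choose $T_0 \subset U \cap L$ so that $T_0 \cong T^2 \setminus D$, with $\gamma_1$ horizontal and $\gamma_2$ vertical, and with $C$ contained in $T_0$. If $C$ is not already near $\gamma_1\cup\gamma_2$, enlarge the neighborhood by a band to an arc from $C$ to $\gamma_1\cup\gamma_2$, so that $T_0$ is still a punctured torus containing $C$. Flowing $T_0$ by $\phi$ for times in $[-\varepsilon,\varepsilon]$ gives an embedded box $B \cong T_0 \times [-\varepsilon,\varepsilon] \subset U$. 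In $B$ the flow is vertical and $\phi$ is smooth. The leaves of $\mc F$ meet $B$ in graphs of functions over (pieces of) $T_0$, because $\phi$ is transverse to $\mc F$.

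\textbf{Reducing transversality to a foliation in normal form.} The Schweitzer vector field on $P^+$ is $(1-\psi)V + \psi \wh X_0$. Its horizontal part $\psi\wh X_0$ points along the direction of slope $\alpha$, i.e.\ it wraps around $\gamma_1$ positively and crosses $\gamma_2$ positively. For this to stay transverse to $\mc F$, the leaves near $L$ above $L$ should tilt so that moving horizontally in the $\wh X_0$ direction makes them rise. The two-sided attracting holonomy of $\gamma_1$ supplies exactly this. After a leafwise isotopy within $B$, the plan is to put $\mc F|_B$, on the positive side, in the form of a foliation spiraling toward $L$ along $\gamma_1$: the holonomy $h_{\gamma_1}(y) < y$ means a leaf at height $y$ comes back lower after going once around $\gamma_1$. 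Equivalently, following the leaf in the reverse direction, it climbs. The orientation of $\gamma_1$, equivalently the sign of $\alpha$ and of the Denjoy flow, is chosen so that moving along $\wh X_0$ is moving in the climbing direction. The negative semiplug uses $-\wh X_0$, and the attracting holonomy on the negative side gives the mirror statement. This is why two-sided attraction is needed.

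Concretely, I would model the foliation on $B$ by a $1$-form $dt - \beta_t$ with $\beta_t$ a closed form on $T_0$ dual to $\gamma_2$, i.e.\ a sheared product. The condition then becomes $\beta_t(\wh X_0) > 0$ wherever $\psi>0$, which holds because $\wh X_0$ has positive $\gamma_1$-component. Since $\psi$ can be supported in an arbitrarily thin slab $T_0\times[\delta_1,\delta_2]$, only leaves very close to $L$ matter. Leaves with small positive holonomy displacement are approximated by such a model after conjugating by a homeomorphism of the transversal, using compactness of $T_0$ and $C^0$ control. I expect this to be the main obstacle. The holonomy of the actual foliation is only a germ along $\gamma_1$, and it is trivial along nothing in particular in the $\gamma_2$ direction. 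So one must isotope $\phi$ (keeping it transverse and smooth, inside $U$) to make $\mc F|_B$ tilt only in the $\gamma_2$ direction. What I would try first is a transverse-flow adjustment: reparametrize the vertical coordinate of $B$ leaf by leaf so that the heights of a leaf along the arc $\gamma_2$ are monotone. This is possible because each leaf crosses $\gamma_2\times[-\varepsilon,\varepsilon]$ in a strip and the holonomy around $\gamma_1$ is strictly contracting. Then verify transversality of the convex combination $(1-\psi)V+\psi\wh X_0$ pointwise, which is immediate once both $V$ and $\wh X_0$ lie in the positive half-space of $T\mc F$.

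\textbf{Trapping $C$ and the plug axioms.} To trap all orbits through $C$, I would use the modified Schweitzer plug $P_{\alpha,x,\psi'}$, whose trapped set has nonempty interior coming from the blown-up orbit. The parameters $\alpha$, $x$ and the blow-up are chosen so that this open trapped region in $\partial_-P$ contains the flow-image of $C$. Since the plug is a local reparametrization near $T_0$, we can first isotope $T_0$ by a compactly supported leafwise isotopy so that $C$ lands inside the image of the blown-up band. The axioms (P1)–(P3) follow from the Schweitzer construction, and the $C^1$ regularity is as stated earlier. The plugged flow $\phi_P$ agrees with $\phi$ near $\partial B$ by (P1), so it is transverse to $\mc F$ outside the support of $\psi'$ and, by the previous step, inside it as well.
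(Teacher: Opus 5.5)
\textbf{There is a genuine gap in your transversality step.}

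Your normal form $\mathcal{F}|_B=\ker(dt-\beta_t)$, with $\beta_t$ closed and dual to $\gamma_2$, has trivial holonomy along $\gamma_2$. That form is not available in general.

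\begin{itemize}
\item The hypotheses impose nothing on $h_{\gamma_2}$. For example, it may be strongly repelling on the positive side.
\item Changing flow-box coordinates, reparametrizing the vertical direction leaf by leaf, or isotoping $\phi$ only conjugates the whole holonomy representation of $\pi_1(T_0)$ on the transversal. None of these can make a nontrivial $h_{\gamma_2}$ trivial. Your suggested ``monotone heights along $\gamma_2$'' adjustment therefore cannot work.
\item An approximation argument does not rescue it either. A nontrivial $h_{\gamma_2}$ is a discrepancy of the same order as the small tilt whose sign decides transversality.
\end{itemize}

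Every leaf of the Denjoy minimal set crosses the $\gamma_2$-direction infinitely often. On $K_\alpha\times\{1/2\}$ we have $\psi=1$, so the plug field there is purely $\widehat{X}_0$. Your verification ``$\beta_t(\widehat{X}_0)>0$'' is made in a model that $\mathcal{F}$ need not be conjugate to. It therefore does not establish that $\widehat{X}_0$ is positively transverse to $\mathcal{F}$ there.

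\textbf{The missing idea is to let the attraction of $\gamma_1$ dominate, rather than seek a normal form.} The paper's route is:
\begin{itemize}
\item For $n$ large, $\gamma_2\gamma_1^n$ has attracting holonomy on a fixed compact range of heights in the box, e.g.\ $[1/4,3/4]$. This holds because $h_{\gamma_1}^n$ pushes that range close to $L$, where $h_{\gamma_2}$ moves points very little.
\item Take the train track $\tau_n=\gamma_1\cup c_n$ with $c_n\simeq\gamma_2\gamma_1^n$. Every branch of $\tau_n$ has attracting holonomy, and the Denjoy lamination $K_\alpha$ is carried by $\tau_n$.
\item Lift $\tau_n$ into the box by parallel transport in a leaf. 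Add a function that increases along each branch to absorb the downward jumps caused by the attracting holonomy. This gives a surface over a tie neighborhood of $\tau_n$ that is transverse to $\mathcal{F}$, whose intersection with $\mathcal{F}$ is the tie foliation.
\item Isotope $K_\alpha$ to be transverse to the ties, and embed the semiplug so that $K_\alpha\times\{1/2\}$ lands on this surface. Then $\widehat{X}_0$ is positively transverse along the minimal set. A bump function with small support keeps the convex combination with the vertical field transverse.
\item Build the negative semiplug the same way from the negative-side attraction, and match the two along $L$.
\end{itemize}

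\textbf{A smaller issue concerns trapping $C$.} You propose re-choosing the identification of $T_0$ so that $C$ sits inside the blown-up band. This distorts $\widehat{X}_0$ relative to $\mathcal{F}$, so transversality would have to be re-proved. Also, the blow-up must stay small to preserve transversality, so you cannot simply enlarge the band.

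The paper instead inserts the modified plug with $C$ in its image and then funnels. In thin layers adjacent to $\partial_\pm P$, $\psi=0$ and $\mathcal{F}$ restricts to a product foliation of a flow box over a disk containing $C$ and a small disk $C'$ of the trapped set. There it re-embeds by a plaque-preserving isotopy that carries the orbits through $C$ into $C'$. Because each plaque is preserved, transversality survives automatically.
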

By \defn{trapped} we mean that there is a negative function $g: C \to \R^{< 0}$ so that $\phi^{g(x)}(x) \subset i(\partial_-P)$ lies in the image of the trapped set of $P$.

\subsection{Proof of the plug insertion lemma}
We continue the notation of the last section. Let $S = T^2 \setminus D$ have coordinates $[-1, 1] \times [-1, 1]$, with $D$ a small disk about the origin, and let $K_\alpha$ be the minimal set of the suspension of the Denjoy homeomorphism $\hat{\rho}_\alpha$. Set $\gamma_h = [-1, 1] \times \{-1\}$ and $\gamma_v = \{-1\} \times [-1, 1]$. Set their intersection, $(-1, -1)$, to be the basepoint for the fundamental group of $S$, which is freely generated by $\gamma_v$ and $\gamma_h$. Observe that $S$ deformation retracts into any neighborhood of $\gamma_v \cup \gamma_h$.

Recall that a \defn{train track} $\tau$ in a surface is a smoothly embedded finite trivalent graph with a \defn{combing} at each vertex giving it a well-defined tangent space at each point. A \defn{tie neighborhood} of $\tau$ is a normal neighborhood $N_\varepsilon(\tau)$ of $\tau$ foliated by intervals, called \defn{ties}, transverse to $\tau$. A lamination $\lambda$ is \defn{carried} by $\tau$ if for any tie neighborhood $N_\varepsilon(\tau)$, $\lambda$ can be isotoped into $N_\varepsilon(\tau)$ transverse to the ties. It is \defn{fully carried} if it intersects every tie of $N_\varepsilon(\tau)$. For more details, see e.g. \cite[Chapter 1]{calegari_foliations_2007}. Train tracks can also carry other train tracks, using the same definition. 

We construct a family of train tracks $\{\tau_n\}_{n\in\N}$ on $S$ as in Figure \ref{fig:traintrack}. In particular, $\tau_n$ is topologically the union of $\gamma_h$ and a curve $c_n$ freely homotopic to $\gamma_v\gamma_h^n$. Observe that for all $n\geq 1$, $\tau_n$ fully carries $K_\alpha$.

\begin{figure}[ht]
    \centering
    \includegraphics[width=0.85\linewidth]{traintrack2.pdf}
    \caption{The train tracks $\tau_1$ (left) and $\tau_3$ (right).}
    \label{fig:traintrack}
\end{figure}

We now prove an insertion lemma for the unmodified Schweitzer plug.

\begin{lemma}\label{lem:insertion1}
    Let $\phi$ be a flow transverse to $\mathcal{F}$. Let $\gamma_1$ and $\gamma_2$ be two oriented simple closed curves in a leaf $L$ of $\mathcal{F}$ such that
    \begin{enumerate}
        \item $\gamma_1$ has attracting holonomy on the positive side, and
        \item $\gamma_1$ and $\gamma_2$ intersect positively and transversely at a single point $x$.
    \end{enumerate}
    Then for any neighborhood $U$ of $\gamma_1 \cup \gamma_2$ in $M$ for which $\phi$ is smooth, there is a positive Schweitzer semiplug insertion $i:P = P^+_{\alpha, \psi} \to M$ into $\phi$ such that 
    \begin{enumerate}
        \item the plugged flow $\phi_P$ is transverse to $\mathcal{F}$,
        \item the image $i(P)$ is contained in $U$, 
        \item the image of negative boundary $i(\partial_-P^+_{\alpha, \psi})$ is contained in $L$, and
        \item the orbit of $\phi$ through $x$ is trapped by $P$.
    \end{enumerate}
    If $\gamma_1$ in addition has attracting holonomy on the negative side, there is a Schweitzer plug insertion $i': P = P_{\alpha, \psi} \to M$ satisfying the first two conditions above.
\end{lemma}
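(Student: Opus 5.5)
We construct the embedding in three steps: first an embedding of the surface $S$ into $L$, then a thickening of it using the flow, and finally a comparison of the plugged vector field with the foliation. The basic idea is that the Schweitzer semiplug tilts the vertical field toward the horizontal field $\wh X_0$ on $S\times\{1/2\}$. So we want the slices $S\times\{t\}$ to be tilted in $M$ so that horizontal motion along $K_\alpha$ still crosses $\mc F$ positively. Attracting holonomy on the positive side of $\gamma_1$ is exactly what allows this.

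\emph{Step 1: embedding $S$.} A regular neighborhood of $\gamma_1\cup\gamma_2$ in $L$ is a punctured torus. Since $S$ deformation retracts into any neighborhood of $\gamma_v\cup\gamma_h$, we can embed $S\hookrightarrow L\cap U$ with $\gamma_h\mapsto\gamma_1$, $\gamma_v\mapsto\gamma_2$ and basepoint $(-1,-1)\mapsto x$. We choose the irrational $\alpha$ and then the train track $\tau_n$, with $n$ large, which fully carries $K_\alpha$. By isotoping the embedding (equivalently, precomposing with a Dehn twist along $\gamma_h$), we arrange that $K_\alpha$ lies in a thin tie neighborhood of $\tau_n$. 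Leaves of $K_\alpha$ then travel mostly parallel to $\gamma_1$, winding about $n$ times around it for each pass near $\gamma_2$. Finally, we arrange that the orbit of $\wh X_0$ through the basepoint lies in $K_\alpha$, by rotating the Denjoy parametrization so that $x$ is not on a blown-up orbit.

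\emph{Step 2: thickening.} Flowing $S\subset L$ forward by $\phi$ for a short time gives an embedding $S\times[0,\delta]\to U$ on which $\phi$ is the vertical field $V$, with $\partial_-P=S\times\{0\}\subset L$. The leaves of $\mc F$ meet each flow interval transversally. In these coordinates, $\mc F$ is therefore the graph of a multivalued family of functions, whose holonomy around $\gamma_1$ is the positive-side holonomy $h_{\gamma_1}$, and this holonomy is strictly decreasing.

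\emph{Step 3: transversality.} The plugged field is $X=(1-\psi)V+\psi\wh X_0$. Where $\psi<1$, the vertical component is positive. The only danger is near $\psi\approx 1$, around $K_\alpha\times\{1/2\}$, where $X$ is nearly horizontal. There we need the horizontal direction $\wh X_0$ to be positively transverse to $\mc F$; in other words, moving along $K_\alpha$ in the slice should climb through leaves. We achieve this by reparametrizing the vertical coordinate, $t\mapsto\theta(p,t)$, so that each slice $S\times\{1/2\}$ is a surface that drops relative to $\mc F$ as one moves along $\gamma_1$. This is possible on the annular part of the tie neighborhood because $h_{\gamma_1}(y)<y$: the foliation "rises" around $\gamma_1$ relative to the flow box.

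The main obstacle is doing this globally. The slice must close up around both generators, but the tilt is only available along $\gamma_1$. The crossing near $\gamma_2$ contributes a bounded but unfavorable amount, while the $n$ windings along $\gamma_1$ contribute roughly $n$ times a definite amount. So for $n$ large, the total change along each carried branch is favorable. I would write the tilted slice explicitly as a section of the flow box, using the tie-neighborhood coordinates of $\tau_n$. Concretely, I would define a height function that decreases linearly along the big branch of $\tau_n$ and jumps back up across a small region near $\gamma_2$. There the branches of $K_\alpha$ are short, and we compensate by making $\psi$ drop below $1$ only off $K_\alpha$. We then choose $\delta$ small so that the holonomy in $[0,\delta]$ is uniformly attracting. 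Since the construction lives in a small neighborhood, $i(P)\subset U$, and conditions (1)--(4) follow. Trapping of $x$ holds because its orbit enters at a point of $K_\alpha\times\{0\}$.

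For the two-sided case, we perform the same construction on the negative side of $L$. There we use the negative semiplug $P^-$ and attracting holonomy on the negative side, with the sign of $\wh X_0$ reversed. Both halves share $\partial_-P^+=\partial_+P^-\subset L$. Their union is $P_{\alpha,\psi}$, which is transverse by the same argument.
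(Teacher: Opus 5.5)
Your overall architecture matches the paper's. You identify $S$ with a neighborhood of $\gamma_1\cup\gamma_2$ in $L$, flow it out by $\phi$, and pick $n$ so that $\gamma_2\gamma_1^n$ has attracting holonomy. You then put $K_\alpha$ in a tie neighborhood of $\tau_n$ and reparametrize vertically so that $K_\alpha\times\{1/2\}$ lies on a tilted section. Your Dehn-twist remark for getting $K_\alpha$ carried is a reasonable touch.

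\textbf{The gap is in Step 3, the heart of the lemma.}
\begin{itemize}
\item \emph{Transversality is needed pointwise, not just in net.} On $K_\alpha\times\{1/2\}$ you have $\psi\equiv 1$, so the plugged field is exactly $\widehat{X}_0$. Hence $\widehat{X}_0$ must be positively transverse to $\mathcal{F}$ at every point of the lifted $K_\alpha$. By openness this then holds on a neighborhood, and the support of $\psi$ must lie inside it. The danger is not ``only near $\psi\approx 1$'': a convex combination with a negatively transverse $\widehat{X}_0$ fails at intermediate $\psi$ too.
\item \emph{Your height function is not tied to the leaves.} You prescribe it in flow-box coordinates (decreasing linearly along the big branch, jumping up near $\gamma_2$). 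The hypothesis controls only the return map $h_{\gamma_1}$, not the local slope of leaves over $\gamma_1$. So such a section can be tangent or negatively transverse to $\mathcal{F}$ at points of $K_\alpha$.
\item \emph{The proposed remedy does not reach the problem.} Letting $\psi$ drop below $1$ off $K_\alpha$ cannot repair failures on $K_\alpha$ itself.
\item \emph{The signs are inverted as written.} $h_{\gamma_1}(y)<y$ means a leaf followed once around $\gamma_1$ returns lower. So the foliation descends, and the section must rise relative to $\mathcal{F}$ in the direction of $\widehat{X}_0$.
\end{itemize}

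The missing idea is the paper's device for converting the net holonomy condition into a pointwise one.
\begin{itemize}
\item First lift $\tau_n$ into a \emph{single leaf} of the pulled-back foliation by parallel transport along the oriented branches. Start at height $1/2$ over the basepoint and cut each branch once: at $(-1,-1)$ for $\gamma_h$, and at $a^-$ for $c_n$.
\item The resulting leaf-height function $f$ jumps upward at each cut. This is exactly because the holonomy of $\gamma_h$, respectively $\gamma_v\gamma_h^n$, is attracting.
\item Then add a correction $E$ that vanishes at the cuts, is strictly increasing along the branches, and makes $f+E$ continuous.
\item The graph of $f+E$ strictly climbs relative to the leaf it follows, so it is positively transverse everywhere. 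This persists on a thin tie neighborhood, which is where $\psi$ is supported.
\end{itemize}
Your heuristic of ``$n$ times a definite amount'' also needs care. The drop $y-h_{\gamma_1}(y)$ degenerates as $y\to 0$, so $\gamma_2\gamma_1^n$ is attracting only on a compact height interval away from $L$ (the paper uses $[1/4,3/4]$). The $c_n$ cut must therefore sit where the leaf height lies in that interval; the paper moves $a^-$ near the basepoint so that $f(a^-)\in(1/4,3/4)$. For the two-sided case, you should also say that both semiplugs use the same $\psi$ and the same position of $K_\alpha$ over $S$. The paper matches them by an isotopy near $L$; without this, the union is not a plug satisfying (P3).
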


\begin{proof} Let $\phi$, $\gamma_1$, $\gamma_2$, and $L$ be as in the lemma statement. Fix a neighborhood $U$ of $\gamma_1 \cup \gamma_2$ in $M$. Pick $\varepsilon$ small enough such that \[\phi^{[0,\varepsilon]}(\gamma_1 \cup \gamma_2) \subset U,\] and
 the holonomy along $\gamma_1$ is defined and attracting along the entire transversal $\phi^{[0, \varepsilon]}(x)$ for any $x$ in $\gamma_1$. 
 We now take a small tubular neighborhood $V \subset U$ of $\gamma_1 \cup \gamma_2$ in $L$ so that $\phi^{[0, \varepsilon]}(\overline{V}) \subset U$. By switching the orientation of $\gamma_2$ if necessary, we can choose a diffeomorphism 
    \[
    h\colon S\to \overline{V}
    \]
    taking $\gamma_h$ to $\gamma_1$ and $\gamma_v$ to $\gamma_2$. This gives an embedding 
    \[
    i_0\colon P_0\to U,
    \] 
    where $P_0=S\times[0,1]$, by the formula 
    \[ 
    i_0(x, t) = \phi^{\varepsilon t}(h(x)). 
    \]
    We pull back $\mathcal{F}$ along $i_0$ to obtain a foliation $\mc G$ on $P_0$. We henceforth work in $P_0$ and consider the holonomy along curves or arcs in $S$ as acting on relatively open intervals $[0, 1]$ containing $0$.
    
    Observe that $\mc G$ has attracting holonomy along $\gamma_h$. Therefore, there is some $n$ such that $\gamma_v\gamma^n_h$ has attracting holonomy in the interval $[1/4, 3/4]$. Fixing $\alpha \in (0, 1)$, and recall that the train track $\tau_n$ carries $K_\alpha$. 

	We now define a lift of $\tau_n$ from $S$ to $P_0$, transverse to $\mc G$, as follows. First recall that $\tau_n = \gamma_h \cup c_n$, and $\gamma_h\cap c_n$ is an arc $a$ not containing $(-1,-1)$. We begin by lifting $(-1, -1)$ to $(-1, -1, 1/2)$ and proceed in the forward direction along $\gamma_h$ using parallel transport for $\mc G$, obtaining a lift of $\gamma_h$ to a leaf of $\mc G$ which is continuous except at $(-1,-1)$. This is possible since $\mc G$ has attracting holonomy along $\gamma_h$ on the entire interval $[0,1]$. Let 
	\[
	f\colon \gamma_h\to[0,1]
	\]
	be the height function for this lift.
	
	Next, isotope $\tau_n$ in $S$ such that $a^-$, the negative endpoint of the arc $a$, is close enough to $(-1,-1)$ to satisfy
	\[
	f(a^-)\in(1/4,3/4).
	\]
	Now start lifting $c_n$ to $P_0$, starting by lifting $a^-$ to $(a^-,f(a^-))$, and continuing along $c_n$ in the forward direction via parallel transport for $\mc G$ as with $\gamma_h$. This extends the lift of $\gamma_h$ to a lift of $\tau_n$, contained in a single leaf of $\mc G$, where we now have discontinuities over both $(-1,-1)$ and $a^-$. Abusing notation, we now let $f$ denote the height function of this lift on all of $\tau_n$.
	
	To remedy these discontinuities, and to make the lift of $\tau_n$ transverse to $\mc G$, we `tilt' the lift up as we move in the forward direction along $\gamma_h$ and $c_n$ as follows. Since $\gamma_h$ has attracting holonomy at $1/2$, the limit of $f$ approaching $(-1, -1)$ in the forward direction along $\gamma_h$ is less than $1/2$. Similarly, since $f(a^-)\in(1/4,3/4)$, the holonomy along $c_n$ is defined and attracting at $f(a^-)$, so the limit of $f$ approaching $a^-$ in the forward direction along $c_n$ is less than $f(a^-)$. Thus, we can define a function
	\[
	E\colon \tau_n\to[0,1]
	\]
	which is $0$ at $(-1,-1)$ and $a^-$, has discontinuities at exactly these points, is otherwise smooth and monotonely increasing along the positive directions of $\gamma_h$ and $c_n$, and such that
	\[
	f+E\colon \tau_n\to [0,1]
	\]
	gives a height function for a smooth lift of $\tau_n$ to $P_0$ transverse to $\mc G$. Let
	\[
	\tilde{f}\colon\tau_n\to P_0
	\]
	denote this lift.

    Extend $\tilde{f}$ to a small tie neighborhood $N_\delta = N_\delta(\tau_n)$ of $\tau_n$ in $S$ so that the graph of $N_\delta$ is transverse to $\mathcal{G}$ and the intersection foliation defines a foliation by ties of $N_\delta$. Let $$q: S \to S$$ be a diffeomorphism of $S$ isotopic to the identity which takes $K_\alpha$ into $N_\delta$ transverse to this foliation by ties. Then the graph of $q(K_\alpha)$ under $\tilde{f}$ is transverse to $\mc G$. Let $$r: P^+_{\alpha, \psi} \to P_0$$ be a diffeomorphism which preserves the interval fibers in the vertical direction and takes $K_\alpha \times \{1/2\}$ to the graph of $q(K_\alpha)$ under $\tilde{f}$, and define $i = i_0 \circ r$. Then for a bump function $\psi$ with sufficiently small support, $$i: P^+_{\alpha, \psi} \to U$$ defines the desired semiplug insertion. Note that the point $x$ lies in $\tau_n$, and so we may choose the isotopy of $K_\alpha$ into a neighborhood of $\tau_n$ so that $x$ lies in $K_\alpha$.

    In the case that $\gamma_1$ has attracting holonomy on both sides, choose $n$ large enough that $\gamma_2\gamma^n_1$ has an interval of attracting holonomy on both sides, and perform the same construction to obtain a negative semiplug insertion $i_-: P^-_{\alpha, \psi} \to U$. Choose the same $\psi$ for both constructions. By modifying $i_-$ via an isotopy near $L$, the images of $K_\alpha$ can be chosen to match up and the union of the positive and negative semiplugs defines a full Schweitzer plug insertion.
\end{proof}

From this we prove Lemma \ref{lem:pluginsertion}, the insertion lemma for the modified Schweitzer plug that traps an entire disk's worth of orbits. We restate this as Lemma \ref{lem:pluginsertion again} below.

\begin{lemma}[Plug Insertion]\label{lem:pluginsertion again}
	Let $\phi$ be a flow transverse to $\mathcal{F}$. Let $\gamma_1$ and $\gamma_2$ be two oriented simple closed curves in a leaf $L$ of $\mathcal{F}$ such that:
	\begin{enumerate}
		\item $\gamma_1$ has two-sided attracting holonomy, and
		\item $\gamma_1$ and $\gamma_2$ intersect positively and transversely at a single point.
	\end{enumerate}
	Then for any neighborhood $U$ of $\gamma_1 \cup \gamma_2$ in $M$ for which $\phi$ is smooth, there is a modified Schweitzer plug insertion $i:P = P_{\alpha,x, \psi'} \to U$ such that the plugged flow $\phi_P$ is transverse to $\mc F$. Moreover, if $C$ is any closed disk contained in $U \cap L$, then $i$ can be chosen so that all orbits passing through $C$ are trapped by $P$.
\end{lemma}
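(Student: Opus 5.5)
The plan is to deduce the lemma from Lemma \ref{lem:insertion1}, replacing the unmodified plug by a modified one whose trapped set contains a small disk, and then shrinking the given disk $C$ into that disk. First, Lemma \ref{lem:insertion1} applied to $\gamma_1,\gamma_2$ and $U$ yields a full Schweitzer plug insertion $i'\colon P_{\alpha,\psi}\to U$ with $\phi_{P}$ transverse to $\mc F$. Here $\partial_- P^+_{\alpha,\psi}$ is glued into $L$ and the minimal set $K_\alpha\times\{0\}$ is embedded in $L$ transverse to $\mc F$ (in the sense of the lifted train track), with the point $x=\gamma_1\cap\gamma_2$ lying in the image of $K_\alpha$. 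Next, pick a point $x'\in K_\alpha$ and blow up its orbit under the Denjoy suspension to get the modified plug $P_{\alpha,x',\psi'}$. Its trapped set now contains an open annular band, the blown-up orbit, and hence a small open disk $D'$ in $S\times\{0\}$. As noted in the construction, the blowup can be chosen so small that $X$ is $C^1$ and $C^0$-close to the Schweitzer vector field, and so that the new minimal set still lies in the tie neighborhood $N_\delta(\tau_n)$. Since transversality to $\mc F$ is an open condition on the compact plug, the same embedding $i=i_0\circ r$ (with $r$ adjusted so the enlarged trapped set maps to the graph of $\tilde f$ over $q(\text{new }K)$) gives a modified plug insertion with $\phi_P$ still transverse to $\mc F$.

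It remains to handle an arbitrary closed disk $C\subset U\cap L$. The idea is to isotope $C$ into the image of the trapped disk without changing $\phi$ outside a small neighborhood, or equivalently to precompose the insertion with a leafwise isotopy. Inside $L$, take a compactly supported isotopy $\sigma_t$ of $L\cap U$ carrying $C$ into a small neighborhood of the point $i(D')\cap L$. Since $C$ is a disk, such an isotopy exists by shrinking $C$ radially and then dragging it along an arc in $L\cap U$ to the plug. This requires $C$ and the plug's base to lie in one component of $L\cap U$; if not, I would first slide the base by choosing the neighborhood $V$ of $\gamma_1\cup\gamma_2$ to be connected to $C$ via a thin strip in $L\cap U$. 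Pushing this isotopy off $L$ along $\phi$ by the flow-box coordinates $i_0$ gives a diffeomorphism of $M$ supported in $U$ that preserves the leaves of $\phi$ restricted to the flow box. Conjugating the plug insertion by it keeps transversality, because it maps leaves of $\mc G$ near $L$ to nearby graphs transverse to $\phi$. An alternative I would try first is to avoid moving $C$ and instead choose the base $\overline V$ and the isotopy $q$ so that $q^{-1}$ of the blown-up region contains $h^{-1}(C)$. The blown-up band of the modified plug can be made to have arbitrary shape after a diffeomorphism of $S$ isotopic to the identity that preserves carrying by $\tau_n$, and the trapped disk can be placed near the arc $a$ where $f+E$ is controlled.

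The obstacle I expect is making the trapped region large enough to contain the whole disk $C$ while the lifted surface stays transverse to $\mc G$. The transversality in Lemma \ref{lem:insertion1} relied on the holonomy attracting on the interval $[1/4,3/4]$ and on the tilt function $E$. A big blown-up band would be a two-dimensional region over which the height function must still be transverse. Over a disk this is harmless because the foliation over a disk is a product, so a graph with small positive slope in the flow direction works. I would therefore route the band so that it sits inside a disk chart of the tie neighborhood. In addition, orbits of $\phi$ through $C$ must first hit $i(\partial_- P)$ inside the trapped set. I would arrange this by making the trapped disk, at height $0$, equal to the image of $C$ under backward flow for a time $g<0$.
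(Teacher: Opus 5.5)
There is a genuine gap, and it is in the trapping step. Your first step matches the paper: build the unmodified plug via Lemma~\ref{lem:insertion1}, then blow up one more orbit by a tiny amount and use that transversality is an open condition.

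In your conjugation route you move the whole plug by a diffeomorphism $\Theta$, obtained by pushing a leafwise isotopy $\sigma$ along flowlines of $\phi$. You justify transversality by saying $\Theta$ sends leaves of $\mathcal{G}$ to graphs transverse to $\phi$. That is the wrong condition. What must hold is that $\Theta_*(i_*X)$ is transverse to $\mathcal{F}$, i.e.\ that $i_*X$ is transverse to $\Theta^{-1}(\mathcal{F})$, and your $\Theta$ preserves orbits of $\phi$, not leaves of $\mathcal{F}$.

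On the core level of the plug, including the blown-up strip containing your trapped disk, the plug field is $\widehat X_0$ pushed into the lifted train-track surface. It is therefore only barely transverse to $\mathcal{G}$. Its transversality comes entirely from the tilt $E$, whose total size is limited by the attracting holonomy along $\gamma_1$ and $\gamma_2\gamma_1^n$. Since $\sigma$ must carry the trapped disk all the way to $C$, it drags part of this core to a place where the transported height function $(f+E)\circ\sigma^{-1}$ has no relation to the slopes of the leaves there. So transversality will generally fail.

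Your other route, making the trapped region itself cover $C$, is only a heuristic as stated. The blown-up orbit is an infinite strip accumulating on $K$, not an annulus, so only a compact piece of it fits in a disk chart. You would have to redo the carrying and lifting argument of Lemma~\ref{lem:insertion1} for this deformed minimal set.

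The missing idea is to funnel only where the plug does nothing, using a deformation that preserves leaves of $\mathcal{F}$ rather than orbits of $\phi$. The paper does this as follows.
\begin{enumerate}
\item Choose the base punctured torus $R\subset U\cap L$ from the start to contain $\gamma_1\cup\gamma_2$ and a neighborhood of $C$, so $C$ lies inside the plug.
\item Take a disk $D\subset L\cap i(P)$ containing $C$ and a small disk $C'$ in the trapped set.
\item Work in thin collars adjacent to $\partial_\pm P$, outside $i(\mathrm{supp}\,\psi)$. There the plug field is just the vertical flow. After reparametrizing the flowbox over $D$ (which has trivial holonomy), $\mathcal{F}$ is the horizontal product foliation.
\item Post-compose $i$ with a level-preserving map $H(z,t)=(h_{\beta(t)}(z),t)$, where $h_t$ is an isotopy of $D$ carrying the projection of $C$ into that of $C'$. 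Apply the same twist in the top collar so that (P3) still holds.
\end{enumerate}
Because $H$ preserves the leaves of $\mathcal{F}$, and $H_*$ of the vertical field still has positive vertical component, the new field stays positively transverse to $\mathcal{F}$. Meanwhile the core of the plug, where transversality is delicate, is left untouched.
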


\begin{proof}
	Let $\phi$, $\gamma_1$, $\gamma_2$, $L$, $U$, and $C$ be as in the lemma statement. Let $R$ be a compact region in $U\cap L$ containing $\gamma_1$, $\gamma_2$, and a small open neighborhood around $C$, and homeomorphic to $T^2\setminus D$. Similar to the proof of Lemma \ref{lem:insertion1}, let $\varepsilon>0$ be small enough so that 
	\[
	\phi^{[0,\varepsilon]}(R)\subset U,
	\]
	and the holonomy along $\gamma_1$ is defined and attracting along the entire transversal $\phi^{[0,\varepsilon]}(z)$ for any $z$ in $\gamma_1$.  Proceeding with the rest of the proof of Lemma \ref{lem:insertion1}, taking $V\subset U$ to contain $R$, produces an unmodified Schweitzer plug insertion
	\[
	i_0\colon P_{\alpha, \psi} \to M
	\]
	contained in $U$, and whose image contains $C$.
	
	In $P_{\alpha,\psi}$, we may blow up $K$ around $x$ to obtain a modified Schweitzer plug insertion 
	\[ 
	i\colon P = P_{\alpha, x, \psi} \to M.
	\]
	Choose this blowup to be small enough that the plugged flow is still transverse to $\mathcal{F}$. This is possible since transversality is an open condition. 
	
	What remains to be shown is that we can change $i$ so that all orbits through $C$ are trapped by the new plug. To achieve this, we `funnel' the orbits through $C$ into the trapped set of $P$ as follows. Let $C'$ be a small closed disk contained in the trapped set of $P$. 
	Let $D$ be an open disk in $L\cap i(P)$ containing $C$ and $C'$, and fix a neighborhood $B\cong D\times[-1,1]$ such that the vertical $[-1,1]$ fibers are flowlines of $\phi$. By reparametrizing the flowline coordinates of $B$ and extending $P$ along flowlines, we may assume that the disks $D \times \{\pm 1/2\}$ form the positive and negative boundary of $i(P) \cap B$ and that $\mc F$ restricts to a product foliation in $D \times [1/2 - \delta, 1/2]$ and $D \times [-1/2, -1/2 + \delta]$, where $\delta>0$ is chosen small enough that these regions are outside of $i(\text{supp}(\psi))$.
	
	Let $\pi\colon B\to D$ be the projection map along flow lines, and let 
	\(
	h_t\colon D\to D
	\)
	for $t\in[0,\delta]$ be an isotopy such that
	\begin{enumerate}
		\item $h_0$ is the identity,
		\item $h_\delta(\pi(C))\subset \pi(C')$,
		\item $h_t=h_0$ for $t$ close to $0$, and 
		\item $h_t=h_\delta$ for $t$ close to $\delta$.
	\end{enumerate}
	
    Then define a map $H:B\to B$ by 
    \[H(z, t) = \begin{cases}
    (h_{ t - (\frac 1 2 - \delta)}(z),t) & t \in [1/2 - \delta, 1/2]\\
    (h_{t - (\delta - \frac 1 2)}(z),t) & t \in [-1/2, -1/2 + \delta]\\
    (z, t) & \text{ otherwise.}
    \end{cases}\]
    Then $H \circ i: P \to M$ is the desired plug insertion.
\end{proof}

\subsection{Annuli of attracting holonomy}\label{sec:annulusholonomy}
It will sometimes be convenient to work with a slight generalization of Lemma \ref{lem:pluginsertion} in which we use an annulus of attracting holonomy in place of a curve. Suppose $\mc F$ has two leaves $L_-, L_+$ which cobound a product region 
$V$ on the positive side of $L_-$ and the negative side of $L_+$. Let $A$ be an annulus of attracting holonomy with boundary components $\partial_- A$, $\partial_+ A$ lying in $L_-$ and $L_+$, respectively. 

If the boundary components of $A$ are nonseparating, then there is an embedded curve $\gamma$ in $L_-$ intersecting $\partial_- A$ transversely at a single point. Any normal neighborhood of $\partial_- A \cup \gamma$ yields a product sutured manifold homeomorphic to $S^1 \times( T^2 \setminus D)$ with negative and positive boundary components on $L_-$ and $L_+$, respectively. If $\phi$ is a flow transverse to $\mc F$ which defines a dynamical product between $L_-$ and $L_+$ and which is tangent to $A$, this sutured manifold can be defined by flowing along $\phi$. 

Inserting a positive semiplug along the positive boundary of this sutured manifold and a negative semiplug along the negative boundary of this sutured manifold gives a full plug insertion into $\phi$. Another way of seeing this is to blow down the product region and identify $L_+$ and $L_-$, insert a plug, and then pull back the new flow by inserting a dynamical product. This discussion, along with the proof of the Plug Insertion Lemma, proves the following.

\begin{lemma}[Annular Plug Insertion Lemma]\label{lem:annularpluginsertion}
	Suppose $L_-$ and $L_+$ cobound a topological product $L_- \times [0, 1]$ and $\varphi$ is a flow transverse to $\mc F$ which defines a dynamical product between $L_-$ and $L_+$. Suppose $A$ is an annulus of attracting holonomy between them which is tangent to $\phi$. Let $\gamma$ be an oriented simple closed curve in $L_-$ such that $\partial_- A$ and $\gamma$ intersect positively and transversely at a single point.
	Then for any neighborhood $U$ of $\gamma \times [0, 1] \cup A$ in $M$ for which $\phi$ is smooth, there is a modified Schweitzer plug insertion 
	\[
	i:P = P_{\alpha,x, \psi'} \to U
	\] 
	such that the plugged flow $\phi_P$ is transverse to $\mc F$. Moreover, if $C$ is any closed disk contained in 
	\[
	N\left(\gamma \times [0, 1] \cup A\right) \cap L_-,
	\]
	where $N\left(\gamma \times [0, 1]\cup A\right)$ is a normal neighborhood of $\gamma \times [0, 1]\cup A$, then the insertion can be chosen so that all orbits passing through $C$ are trapped by $P$.
\end{lemma}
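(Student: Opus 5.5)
The plan is to reduce Lemma \ref{lem:annularpluginsertion} to the single-curve construction of Lemma \ref{lem:insertion1} and Lemma \ref{lem:pluginsertion again}. We build the negative semiplug near $L_-$ and the positive semiplug near $L_+$, and connect them through the dynamical product. Concretely, use the flow $\phi$ to parametrize the product region by $L_- \times [0,1]$, with flowlines vertical; this is possible because $\phi$ defines a dynamical product. Since $A$ is tangent to $\phi$, we may take $A = \partial_- A \times [0,1]$ in these coordinates, so that $\partial_+ A = \partial_- A \times \{1\}$. Set $\gamma_1 = \partial_- A$ and $\gamma_+ = \gamma \times \{1\} \subset L_+$. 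Then $\partial_+ A$ and $\gamma_+$ intersect positively and transversely at one point.

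First, apply the positive-side half of Lemma \ref{lem:insertion1} at $L_+$, using the pair $(\partial_+ A, \gamma_+)$. Since $\partial_+A$ has attracting holonomy on the positive side, this gives a positive semiplug $P^+_{\alpha,\psi}$ sitting just above $L_+$, contained in $U$, with negative boundary in $L_+$ and the plugged flow transverse to $\mc F$. Next, apply the mirror construction at $L_-$ with the pair $(\partial_-A, \gamma)$, using attracting holonomy on the negative side, to get a negative semiplug $P^-_{\alpha,\psi}$ just below $L_-$, with the same $\alpha$ and $\psi$. The trapped sets are the graphs of $q_\pm(K_\alpha)$ over the train track neighbourhoods.

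To glue the two semiplugs, we match the images of $K_\alpha$. We choose the isotopies $q_\pm$ of $S$ into $N_\delta(\tau_n)$ compatibly under the vertical identification $L_- \cong L_+$. Any leftover mismatch is absorbed by an isotopy supported in a thin trivially foliated collar, as in the final step of Lemma \ref{lem:insertion1}; this is the "blow down, insert, pull back" picture.

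The glued object consists of three pieces:
\begin{itemize}
\item the negative semiplug below $L_-$;
\item the vertical dynamical-product piece $(T^2\setminus D)\times[0,1]$;
\item the positive semiplug above $L_+$.
\end{itemize}
This is a sutured manifold of the form $S\times[-1,1]$ after reparametrizing the vertical direction. Its vector field is vertical in the middle piece, so condition (P3) holds: untrapped orbits exit directly above their entry point. Conditions (P1) and (P2) are inherited from the semiplugs, so this is a Schweitzer plug inside $U$, and the plugged flow is transverse to $\mc F$. We then blow up a second orbit to obtain the modified plug $P_{\alpha,x,\psi'}$, which remains transverse by openness. Finally, we run the funnelling isotopy $H$ from the proof of Lemma \ref{lem:pluginsertion again} in trivially foliated collars just outside $L_-$ and $L_+$, which forces every orbit through $C$ into the trapped set.

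The main obstacle is the transversality of the lifted train track across the product region. In Lemma \ref{lem:insertion1}, the lift of $\tau_n$ lives in a thin slab above or below one leaf, and the "tilt" function $E$ uses attracting holonomy of $\gamma_v\gamma_h^n$ in that slab. Here the middle region need not be trivially foliated: leaves inside the product may spiral, so the region is only a topological product. For this reason I would not lift through the middle at all. Instead, I would place both semiplugs in slabs outside the product region, which stay inside $U$ after shrinking $\varepsilon$, and use only the flow $\phi$, which is vertical, through the middle. The remaining point to check is that $\phi$ is still transverse to $\mc F$ there, which holds because $\phi$ is unchanged in the middle piece.
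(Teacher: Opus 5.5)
Your proposal is correct and follows the paper's argument. The paper likewise flows a normal neighborhood of $\partial_- A\cup\gamma$ across the dynamical product to obtain a product sutured manifold $(T^2\setminus D)\times[0,1]$, inserts a negative semiplug along its negative boundary in $L_-$ and a positive semiplug along its positive boundary in $L_+$ (equivalently: blow down the product, insert a plug, pull back), and then defers to the proof of Lemma~\ref{lem:pluginsertion again} for the modified plug and the funnelling of $C$. One small caution: there may be no ``trivially foliated collar'' over all of $S$ on either side of $L_\pm$ to absorb a mismatch, so it is cleaner to parametrize the upper semiplug by flow-transporting the lower one's parametrization and to use a single isotopy $q$, with $q(K_\alpha)$ close enough to $\tau_n$ to be transverse to the ties on both sides, so that the faces match exactly.
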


\subsection{Proof of the Instability Theorem}

We now prove the Instability Theorem (Theorem \ref{thma:perturb}), restated as Theorem \ref{thm:perturb again} below.

\begin{theorem}\label{thm:perturb again}
	 If every leaf of $\mc F$ has genus, then $\mc F$ can be $C^0$-approximated by foliations which do not force closed orbits.
\end{theorem}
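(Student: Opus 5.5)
The plan is to perturb $\mc F$ by inserting finitely many very thin sheared product regions, each containing an annulus of attracting holonomy, and then build a transverse flow with plugs inside these regions that break every closed orbit. Fix an auxiliary smooth flow $\phi$ transverse to $\mc F$. By compactness, choose finitely many small closed disks $C_1,\dots,C_m$, each lying in a leaf $L_j$ of $\mc F$, such that every orbit of $\phi$ passes through the interior of some $C_j$. To arrange this, cover $M$ by flow boxes of $\phi$ that are also foliation charts; every orbit meets some box, and the box contains a plaque disk crossing it.

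Since every leaf has genus, for each $j$ we choose nonseparating simple closed curves $\gamma_1^j,\gamma_2^j$ in $L_j$ meeting once positively, together with an embedded torus-minus-disk $R_j\subset L_j$ containing them and $C_j$. These are chosen inside a small neighborhood of $C_j$ pushed along the leaf; if necessary we enlarge $C_j$ slightly within $L_j$ so that the genus is reachable. We also take the $R_j$ pairwise disjoint by pushing them to slightly different nearby leaves; the covering property is open, so it is preserved.

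Next we modify $\mc F$. Along each $L_j$, thicken $L_j$ into a thin trivially foliated product $L_j\times[0,\epsilon]$ ("Denjoy blowup" of one leaf). Then shear along the annulus $\gamma_1^j\times[0,\epsilon]$, as in Section \ref{sec:background finite depth}, by a monotone $g$ chosen so that the boundary curves $\gamma_1^j\times\{0\}$ and $\gamma_1^j\times\{\epsilon\}$ have attracting holonomy on the inner sides. This produces an annulus of attracting holonomy in the sense of Section \ref{sec:annulusholonomy}. Since the shearing happens in an arbitrarily thin slab and the plane field can be taken to tilt only slightly, the result $\mc F'$ is $C^0$-close to $\mc F$.

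The flow $\phi$ is modified to $\phi'$, transverse to $\mc F'$, by inserting a dynamical product across each slab: flow vertically through $L_j\times[0,\epsilon]$, and take $A$ tangent to $\phi'$. With $\epsilon$ small, every orbit of $\phi'$ still passes through some $C_j$, now viewed in $L_j\times\{0\}$. Apply Lemma \ref{lem:annularpluginsertion} in each slab, with pairwise disjoint neighborhoods $U_j$ and with $C=C_j$. This yields plugged flows transverse to $\mc F'$ in which every orbit through $C_j$ is trapped.

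Finally, we check aperiodicity. A closed orbit $\mc O$ of the plugged flow $\phi''$ either avoids every plug or enters one. If $\mc O$ avoids all plugs, it is a closed orbit of $\phi'$, so it meets some $C_j$ and is therefore trapped, a contradiction. If $\mc O$ enters a plug, it must also exit, since a trapped orbit never returns. By (P3) it exits at the point directly above its entry, so collapsing its plug segments gives a closed orbit of $\phi'$ that passes through some $C_j$, again a contradiction. One subtlety is that a new closed orbit could in principle lie inside a plug, but the modified Schweitzer plug has no closed orbits inside it, since the Denjoy minimal set is aperiodic.

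The main obstacle is the bookkeeping that ensures every orbit of $\phi'$ hits some $C_j$ after the thickening, while the plug neighborhoods remain disjoint and the total perturbation remains $C^0$-small. I would handle this by fixing the $C_j$ and $U_j$ first, and choosing all slab thicknesses afterwards, so small that they lie inside the chosen flow boxes. The perturbation of $\mc F$ and the trapping are then both controlled by these fixed choices.
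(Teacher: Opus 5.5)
\textbf{There is a genuine gap in the step that creates attracting holonomy, so Lemma \ref{lem:annularpluginsertion} cannot be applied as you cite it.}

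\emph{Wrong shear curve.} Shearing along $\gamma_1^j\times[0,\epsilon]$ produces spiraling whose juncture class is Poincar\'e dual to $\gamma_1^j$. That class vanishes on $\gamma_1^j$ itself. Concretely, a parallel push-off of $\gamma_1^j$ in $L_j$ misses the cut annulus, so it stays in a single leaf at every height. Hence $\gamma_1^j\times\{0\}$ and $\gamma_1^j\times\{\epsilon\}$ have trivial holonomy. Only curves crossing the shear annulus, such as $\gamma_2^j$, acquire holonomy; this is why the paper shears along a curve $\alpha_j$ dual to $\gamma_j$.

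\emph{Wrong side.} Even after swapping the roles of the two curves, you ask for attracting holonomy on the \emph{inner} sides of the slab's boundary leaves. If $g$ has no interior fixed point this is impossible: the holonomy of a loop is then $g^{\pm k}$, attracting at one end and repelling at the other, which is the relation $j^+(\Sigma_-)=-j^-(\Sigma_+)$ of Lemma \ref{lem:juncturesinproduct}. If $g$ does have an interior fixed point, you only get a two-sided \emph{repelling} interior leaf. More to the point, an annulus of attracting holonomy in the sense of Section \ref{sec:annulusholonomy} needs the opposite configuration: $\partial_-A$ attracting on the negative side of $L_-$, and $\partial_+A$ attracting on the positive side of $L_+$. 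Those are the sides \emph{outside} the product region. That is what lets the lemma place a negative semiplug below $L_-$ and a positive semiplug above $L_+$ via Lemma \ref{lem:insertion1}. In your construction the outer sides of the slab carry the original germ of $\mathcal F$ along $L_j$. That germ need not have any attracting holonomy, for instance if $\mathcal F$ has no holonomy. So no plug is produced.

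\textbf{How to fix it.} Use the paper's construction. Blow $L_j$ up to $L_j\times[-1,1]$, pick $\alpha_j$ meeting $\gamma_j$ once, and shear along $\alpha_j\times[-1,0]$ and $\alpha_j\times[0,1]$ in opposite senses. Then the middle curve $\gamma_j\times\{0\}$ has two-sided attracting holonomy. Apply the curve version, Lemma \ref{lem:pluginsertion}, with $(\gamma_1,\gamma_2)=(\gamma_j,\alpha_j)$ and $C=C_j\subset L_j\times\{0\}$. Alternatively, put these two opposite shears on either side of a trivially foliated middle slab; that gives a genuine annulus of attracting holonomy, and the annular lemma then applies.

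Separately, making a $C^0$-small blow-up of a noncompact, possibly non-proper leaf takes more than choosing a thin slab. The paper appeals to the generalized Denjoy blowup of Kazez--Roberts for this.

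The rest of your argument matches the paper's proof: finitely many disks meeting every (closed) orbit, disjoint plugs placed inside the blown-up regions, and the (P3) argument for aperiodicity.
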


\begin{proof}
    Suppose every leaf of $\mc F$ has genus, and let $\phi$ be a smooth flow transverse to $\mathcal{F}$. By compactness of $M$, we may find a finite collection $\{C_j\}_{j=1,\ldots,n}$ of closed disks such that
    \begin{enumerate}
        \item each $C_j$ is contained in a distinct leaf $L_j$ of $\mc F$, and
        \item every closed orbit of $\phi$ intersects at least one $C_j$.
    \end{enumerate}
    Let $\{D_j\}_{j=1,\ldots,n}$ be a finite collection of open disks such that 
    \[
    C_j\subset D_j\subset L_j
    \]
    for all $j$, and flow out each $D_j$ slightly along $\phi$ to a disjoint collection $\{V_j\}$ of foliated flowbox neighborhoods. For each $j$, let $\gamma_j$ be a nonseparating simple closed curve in $L_j$. Such a curve is guaranteed to exist since each leaf of $\mc F$ has genus.

    We now construct a new foliation $\mc F'$ by blowing up each $L_j$ and inserting the following foliation. On $L_j \times [-1, 1]$, consider the product foliation by copies of $L_j$, and take a simple closed curve $\alpha_j$ in $L_j \times \{0\}$ which intersects $\gamma_j \times \{0\}$ transversely in a single point. Such an $\alpha_j$ exists since $\gamma_j$ is nonseparating. Now shear along the annuli $\alpha_j \times [-1, 0]$ and $\alpha_j \times [0, 1]$ so that $\gamma_j \times \{0\}$ has attracting holonomy. Call this foliation $\mc F_j$. Then $\mc F'$ is obtained as a generalized Denjoy blowup of $\mc F$ along the leaves $L_j$, inserting the foliations $\mc F_j$ as saturated sets $U_j$ for all $j$. By \cite[Theorem 5.2]{kazez_c0_2017}, this foliation can be chosen to be $C^0$-close to $\mc F$. In particular, under this construction $\phi$ will still be transverse to $\mc F'$. 
    
    Abusing notation, we refer to the leaf in $\mc F'$ corresponding to $L_j \times \{0\}$ in $F_j$ as $L_j$. Furthermore, we consider $D_j$ and $\gamma_j$ as lying in $L_j \times \{0\}$. Note the collection $\{C_j\}$ still intersects each closed orbit of $\phi$. 
    
    By Lemma \ref{lem:pluginsertion} we may insert modified Schweitzer plugs $\{P_j\}_{j=1,\ldots,n}$ which trap all the closed orbits passing through $C_j$. These insertions can be chosen to lie in the blowup regions $U_j$ and therefore are disjoint. The resulting plugged flow, $\phi'$, will be the desired $C^1$ aperiodic flow.
\end{proof}

\subsection{First return maps}\label{subsec:firstreturn}
We now discuss the dynamics of first return maps induced by the semiflow on a Schweitzer semiplug. This material is independent from  the remainder of the paper. We focus on the case of an unmodified Schweitzer; the first return map for a modified Schweitzer plug is obtained from this by blowing up an orbit in the invariant Cantor set. 

Let $U$ be a locally top-depth region which is a product region, which has a transverse dynamical product flow $\phi$, and suppose we have a semiplug insertion $i: P^+ \to \widehat{U}$ with $i(\partial_-P^+) \subset \partial_-\widehat{U}$. Let $\phi_P$ be the plugged flow; this flow defines a first return map $f: L \to L$ on any leaf $L$ of $U$. Since $\phi_P$ is a dynamical product outside of the image of $(T^2 \setminus D) \times [0, 1]$, we need only describe the first return map on the foliation restricted to the image of the semiplug. This foliation has relative depth one with the relative depth zero leaf being a punctured torus, so each leaf in the interior is homeomorphic to 
\[
L_P = (S^1 \times \R) \setminus( \{1\} \times \Z),
\] 
a cylinder with a $\Z$'s worth of points removed. We continue to denote the plugged first return map on $L_P$ by $f$.

Let $C$ be the intersection of $L_P$ with the minimal set $K$ of the plug. This set is homeomorphic to a Cantor set, and $f$ acts on it as a restriction of the Denjoy homeomorphism used to define the plug. The best way to understand the dynamics of $f$ on $L_P$ is to consider a separating simple closed curve $\alpha$ in $L_P$ that is `parallel' to $C$, as in Figure \ref{fig:first return map}, and look at its iterates under $f$. 

\begin{figure}[ht]
    \centering
\includegraphics[width=0.75\linewidth]{firstreturn.pdf}
 \caption{The first return map $f\colon L_P\to L_P$ defined by the Schwietzer semiflow. The black lines represent the Cantor set $C$, and the hole punches are where the rest of $L$ meets $L_P$.}
    \label{fig:first return map}
\end{figure}

What we see is that $\alpha$ gets hung up on the Cantor set $C$ but pushes through the gaps, eating one more puncture of $L_P$ after each iteration. There is also some twisting, coming from the Denjoy homeomorphism. The result is a map with an invariant Cantor set but no periodic points. Figure \ref{fig:first return map} is our best illustration of what this looks like.

Since $C$ is part of $L_P$ (and not part of its space of ends), the map $f$ is homotopic to a translation, and the interesting dynamics of this map are not `homotopically interesting.' In particular, Handel--Miller theory is not applicable, and the mapping torus of $f$ is just the mapping torus of a translation on $L_P$. But, removing the Cantor set from $L_P$ fixes this problem. So, let $L_P'=L_P\setminus C$, and let $f'$ be the restricted homeomorphism. This map is now reminiscent of a suspension of a Denjoy homeomorphism on the Cantor tree surface, as studied in \cite{hernandez2026fibrations}, except there is now endperiodic behavior involved as well. Motivated by \cite[Theorem 1.1]{hernandez2026fibrations} and the fact that endperiodic mapping tori are tame, we end this section with the following questions.

\begin{question}
    What is the mapping torus of $f'\colon L_P'\to L_P'$? In particular, is it tame (i.e. homeomorphic to the interior of a compact 3-manifold)?
\end{question}

\subsection{Other plugs}\label{sec:other plugs}
We end this section with a brief digression on the use of other plugs. We first observe that Harrison's construction of ``Schweitzer-suspension'' vector fields \cite{HARRISON1988249} can always be chosen to be transverse to a foliation: in each semiplug, such a vector field $V$ is transverse to a properly embedded annulus $A$ in $T^2 \setminus (D \times [0, 1])$, and so ``spinning'' $A$ around $T^2$ as in Lemma \ref{lem:connectingapproxfinite} or \cite{landry2023endperiodic} yields a depth 1 foliation in $T^2 \setminus (D \times [0, 1])$ which is transverse to $V$. Doubling this foliation yields a foliation transverse to the entire plug, which has a curve of two-sided attracting holonomy.

In particular, this shows that the original Wilson plug, Harrison's $C^2$ plug, and G. Kuperberg's $C^1$ volume-preserving plug are all transverse to a relative depth 1 foliation. More is true for the latter two plugs: since both are $C^0$ perturbations of the Schweitzer plug, the proof of Lemma \ref{lem:pluginsertion} shows that they can be inserted along nonseparating curves with two-sided attracting holonomy.

The difficulty with extending Theorem \ref{thma:classification} by constructing $C^2$ or $C^1$ volume-preserving counterexamples is that the plug used in our argument must trap an open set. It is not clear whether Harrison's construction can be done with a doubly blown up Denjoy diffeomorphism as in the modified Schweitzer plug above. Additionally, no volume-preserving plug can trap an open set, and G. Kuperberg's volume-preserving ``twisted Wilson plugs'' do not work in this setting. Therefore, constructing aperiodic volume-preserving flows transverse to foliations would require genuinely new ideas.

However, since Wilson's plug traps an open set and can be inserted into a model foliation, one can increase the regularity of Theorem \ref{thma:perturb} to $C^2$ by first perturbing the foliation to add Wilson's smooth plug to trap an open set, and then shearing inside the blowup regions to insert Harrison's plug.

On the other hand, extending past $C^{3-\varepsilon}$ regularity seems substantially more difficult. It is not clear how to construct a foliation transverse to K. Kuperberg's $C^\infty$ plug, and it seems plausible none exists. We mention also the following conjecture of Harrison, which would imply there are no $C^3$ plugs obtained by Schweitzer suspension:
\begin{conjecture}[Loxodromic Mapping Conjecture \cite{HARRISON1988249, Harrison_Pugh_1992}]
    Suppose $f: S^2 \to S^2$ is a $C^3$ diffeomorphism which fixes the north and south poles, attracting the first and repelling the second, and has no other fixed points. If one orbit is asymptotic to both the north and south poles, then $f$ has no other minimal invariant sets.
\end{conjecture}

\section{Connecting junctures and approximating finite depth foliations}\label{sec:junctures}

\begin{convention}
For this section, we additionally assume that $\mc F$ is finite depth and $C^2$-smoothable.
\end{convention}

\subsection{Overview}

In Section \ref{sec:forcingclosedorbits}, we showed that if $\mc F$ has a nonproduct locally top depth region, or is approximated by such foliations, then $\mc F$ forces closed orbits. In Section \ref{sec:plugs}, we showed that if $\mc F$ has a curve or annulus of attracting holonomy, then we can insert a plug transverse to $\mc F$, thereby breaking some closed orbits of a transverse flow. Our goal in this section is to show that exactly one of these situations holds. Together with the right choice of initial transverse flow to plug, which we handle in Section \ref{sec:constructingflows}, this will prove the Characterization Theorem.

We prove this dichotomy by studying regions we call \emph{base regions}, which generalize locally top depth components by allowing for iterated shearing. Within a base region, we introduce the idea of having \emph{connecting (limit) junctures}. Whether or not a base region has connecting (limit) junctures determines whether it is approximated by fibrations or instead has a curve or annulus of attracting holonomy. The capstone result of this section is the following.

\begin{proposition}\label{prop:junctures capstone}
	Let $U$ be a base region of $\mc F$. Then the following dichotomy holds.
	\begin{enumerate}
		\item If $U$ has connecting (limit) junctures, then $\mc F$ can be approximated by fibrations in $U$.
		\item If $U$ does not have connecting (limit) junctures, then it contains a nonseparating curve of two-sided attracting holonomy or an annulus of attracting holonomy. 
	\end{enumerate}
\end{proposition}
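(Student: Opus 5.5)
The plan is to argue by induction on the relative depth of the base region $U$. Throughout, the structure of $U$ is described by the juncture classes $j^\pm$ on its relative depth-zero boundary leaves and on the intermediate leaves produced by iterated shearing. The two directions are handled by different mechanisms. Direction (1) is a constructive ``spinning'' argument in the spirit of Thurston and Landry--Minsky--Taylor. Direction (2) extracts a curve from the failure of the junctures to match, using the observation after the Cantwell--Conlon theorem: if a positive juncture class evaluates positively on a loop, that loop has attracting holonomy on the positive side. The $C^2$ hypothesis enters only through Cantwell--Conlon. It ensures every juncture class is compactly supported, so it is Poincar\'e dual to a finite union of closed curves $\gamma$ in the relevant leaf $S$. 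All curves below can therefore be taken compact.

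For (1), I first handle relative depth one. Here $U$ is locally top-depth over leaves $S$ with $j^+(S)=-j^-(S)$ (the connecting condition; compare Lemma \ref{lem:juncturesinproduct}). Choose a compactly supported multicurve $\gamma\subset S$ dual to this common class. Cut $U$ along the depth-zero leaves and reglue by ``unspiraling'': replace a thin collar of $S$ by a foliation in which the spiraling leaf on the positive side is cut and pasted with the spiraling leaf on the negative side along $\gamma$. The matching of junctures guarantees the leaves close up. The result is a foliation whose restriction to $U$ is a fibration, and which agrees with $\mc F$ outside a small neighborhood of $\partial\widehat U$. Shrinking the collar makes the plane fields $C^0$-close.

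For higher relative depth, the definition of connecting \emph{limit} junctures should say that the junctures match after passing to a limit of the inductively produced approximations. So I will apply the depth-one step to the top-depth subregions. This lowers the relative depth by one while preserving the connecting property. Then I iterate, taking a diagonal sequence of $C^0$-approximations.

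For (2), suppose the junctures do not connect at some leaf $S$ of relative depth $k-1$ in $U$, so that $j^+(S)\neq -j^-(S)$. The key step is to produce a nonseparating simple closed curve $c\subset S$ with $j^+(c)>0$ and $j^-(c)<0$. With the sign conventions for negative spirals, such a $c$ has attracting holonomy on both sides.

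To find $c$, view $j^+$ and $-j^-$ as distinct primitive classes in $H^1_c(S)$. If they are not negative multiples of one another, the set of classes $[c]\in H_1(S)$ on which both are positive is a nonempty open cone. Primitive integral classes in this cone are represented by nonseparating simple closed curves. If instead $j^+=-\lambda j^-$ with $\lambda>0$, primitivity forces $\lambda=1$, which is the connecting case and is excluded. One degenerate case needs separate treatment: $S$ may be a product leaf lying between two thin product regions, so that the holonomy data lives on two different leaves $L_-,L_+$ cobounding a product. In that case the same cone argument, applied after identifying $L_-$ with $L_+$ by flowlines, gives an annulus of attracting holonomy instead of a curve. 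To ensure $c$ is nonseparating in the actual leaf containing it, I will perturb within the cone so that $c$ meets some other curve once. This is possible since $H^1_c$ pairs nontrivially with $H_1$.

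The main obstacle is making the inductive bookkeeping in (1) precise. The issue is that approximating at depth $k$ changes the juncture classes seen at depth $k-1$, so the ``limit junctures'' must be computed correctly and shown to be stable under the construction. I would first try to verify this in the model produced by Lemma \ref{lem:juncturesinproduct}, where shearing along $\gamma$ shifts the lower junctures by the class of $\gamma$.
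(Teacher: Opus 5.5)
Your part (2) is essentially the paper's argument: the open cone of classes on which both junctures are positive, a primitive integral class in it, and an annulus when the offending leaf bounds a trivially foliated product. Part (1), however, has a genuine gap, and neither direction handles the infinite case.

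\textbf{The depth bound and limit junctures.} For relative depth $\ge 2$ you propose an induction whose key step is left open: despinning the top-depth pieces while keeping the junctures connecting. You also motivate it by reading ``limit junctures'' as junctures of the inductively produced approximations. The paper does not induct. The real use of compact support in (1) is Proposition~\ref{prop:C2connectinghasdepth1}: connecting, compactly supported junctures force relative depth at most one. Suppose $k-\ell\ge 2$. Then a component of $V_{k-1}$ is a topological product $\Sigma\times(0,1)$ fibering over $S^1$, so its generalized endperiodic monodromy is homotopic to a translation. On the other hand, connecting junctures let one glue compact juncture representatives across the depth-$k$ regions and the blown-down depth-$(k-1)$ leaves. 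This gives a closed, homologically nontrivial surface, hence an eigenvector in $H^1_c$ that a translation cannot have. So the case you want to induct on never occurs. The step you describe, despinning inside the product regions $V^i_{k-1}$, is exactly the construction that yields this contradiction.

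Limit junctures are a separate matter. They concern base regions with infinitely many locally top-depth components. There the relative-depth-zero leaves of a mapping torus component form a Cantor-type set $A\subset S^1$, and at a non-isolated leaf $J^\pm(\Sigma_a)$ is the set of limits of junctures of nearby isolated leaves. Your plan never treats this case, and a ``thin collar of each $S$'' makes no sense there.
\begin{itemize}
\item In (1), the paper first replaces all sufficiently thin top-depth product regions by trivially foliated products. It then uses local constancy of $J^\pm$ (Lemma~\ref{lem:connectingimpliesconstant}) to see that the connecting property survives, applies the finite despinning, and diagonalizes.
\item In (2), suppose the offending leaf $S$ is not isolated, or $J^\pm(S)$ is not a singleton. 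Then one must pass to nearby one-sidedly isolated leaves $S_-\le S\le S_+$ with $j^-(S_-)\neq -j^+(S_+)$, and produce an annulus of attracting holonomy across the product they cobound.
\end{itemize}

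\textbf{Smaller points.} In (2) you have a sign slip. With the paper's conventions, $j^\pm(c)>0$ means attracting on the $\pm$ side; this is why $j^+(\Sigma_-)=-j^-(\Sigma_+)$ in Lemma~\ref{lem:juncturesinproduct}. So you want $j^+(c)>0$ and $j^-(c)>0$. Your cone $\{j^+>0\}\cap\{-j^->0\}$ is nonempty exactly in the connecting case. It would then produce two-sided attracting curves in regions that (1) approximates by fibrations, contradicting Lemma~\ref{lem:holonomy implies not fibr}. Your final case split ($j^+=-\lambda j^-$, with $\lambda=1$ by primitivity) belongs to the correct cone, so only the stated criterion needs fixing. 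No perturbation is needed for nonseparation: a separating curve has zero algebraic intersection with a compact representative of $j^\pm$.

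In relative depth one, removing the fixed point of the holonomy in a thin collar of $S$ only lets leaves pass through $S$. It does not by itself make them close up into fibers; a holonomy-free local modification can have dense leaves. Closing up is the content of the despinning step the paper cites from Gabai and Landry--Minsky--Taylor. One works in a carrying sutured manifold enlarged so that $\partial_\pitchfork N$ meets $\mathcal F$ in circles, and glues large truncated top-depth leaves along compact juncture representatives. $C^0$-convergence comes from exhausting the leaves, not from shrinking a collar.
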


This is used to prove the Characterization Theorem as follows. If any base region $U$ of $\mc F$ falls into case (1), we apply Theorem \ref{thm:locallytopdepthclosing} to show that $\mc F$ is approximated by foliations that force closed orbits in $U$, which in turn shows $\mc F$ forces closed orbits in $U$. If instead, every base region satisfies case (2), we apply the Plug Insertion Lemma to insert modified Schweitzer plugs along the curves produced in case (2). By starting with the right choice of transverse flow, the plugged flow will be aperiodic.

\subsection{Base regions and connecting junctures}\label{subsec:base regions}
Starting with our foliation $\mc F$, we produce a new foliation $\check{\mc F}$ as follows. For every locally top-depth region of $\mc F$ that is a product region, replace the foliation in this region to make it a trivially foliated product. 
This may produce new such regions, so continue the process until all product regions are trivially foliated products. This new foliation is $\check{\mc F}$. 

Define a \defn{base region} of $\mathcal{F}$ to be a locally top-depth region of $\check{\mc F}$. If $U$ is a base region, it is also an open saturated set for $\mc F$. Define the \defn{base depth} of a base region $U$ to be the depth of $U$ in $\check{\mc F}$. If $U$ has depth $k$ and base depth $\ell$, note that the relative depth of $U$ is $k - \ell$. 

Define a \defn{mapping torus region} of $\mc F$ to be an open, saturated set (topologically) fibering over $S^1$ with fiber any relative depth zero leaf. Define the \defn{fiber depth} of a mapping torus region to be the depth of a relative depth zero leaf.
 
We first record the following preliminary result describing the structure of base regions.

\begin{lemma}\label{lem:base region structure}
	Let $U$ be a base region of base depth $\ell$ for the foliation $\mc F$. For $d\geq \ell$, let 
	\[
	V_d=\{L\;\mid\;L\text{  a leaf in }U\text{ of depth }\geq d\}.
	\]
	Then the following hold.
	\begin{enumerate}
		\item $U$ is not a product region.
		\item $\mc F\mid_U$ is obtained from $\check{\mc F}\mid_U$ by iterated shearing in product regions.
		\item If $d>\ell$, $V_d$ is a collection of open product regions $V_d^i$, each of the form \[\Sigma^i\times (0,1)\] for $\Sigma^i$ a leaf of $\mc F\mid_U$ of depth $d-1$, and the foliation on $V_d^i$ is obtained from the product foliation by iterated shearing. Furthermore, $V_d^i$ is a mapping torus region.
	\end{enumerate}
	
\end{lemma}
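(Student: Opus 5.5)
The plan is to analyze the iterative construction of $\check{\mc F}$ and induct on the number of collapsing stages. Write $\mc F = \mc F_0, \mc F_1, \dots, \mc F_m = \check{\mc F}$. Here $\mc F_{s+1}$ is obtained from $\mc F_s$ by replacing every locally top-depth product region of $\mc F_s$ with a trivially foliated product. The process terminates because each stage strictly lowers the maximal depth of leaves in the modified regions, and $\mc F$ has finite depth. By Lemma \ref{lem:juncturesinproduct}, each replaced region of $\mc F_s$ is conjugate to a shear of the trivial product $\Sigma_-\times[0,1]$ along a curve or line dual to $j^+(\Sigma_-)$. So, read backwards, each stage $\mc F_{s+1}\to\mc F_s$ is a shearing in a trivially foliated product region, and its support is a product region of $\mc F_{s+1}$.

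Part (1) is immediate from the definition. $U$ is a locally top-depth region of $\check{\mc F}$. If it were a product region, the process would not yet have stopped, since the construction continues until every locally top-depth product region is trivially foliated, and a trivially foliated product contains only leaves of the same depth as its boundary. That contradicts the definition of a locally top-depth component.

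Part (2) follows by concatenating the reversed stages. The only thing to check is that the shearing regions which meet $U$ are contained in $U$. For this, argue that each replaced region $W$ of $\mc F_s$ is an open saturated set whose closure leaves have strictly smaller depth. Its image in $\check{\mc F}$ is then a union of leaves of a single depth, together with limit leaves, so it lies either inside a locally top-depth region of $\check{\mc F}$ or in its complement. Connectedness of $W$ gives that it lies entirely in one base region or misses all of them.

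For part (3), I would induct downward along the stages. The leaves of $U$ of depth $\ge d>\ell$ are exactly those created by shearings; leaves of $\check{\mc F}\mid_U$ have depth $\ell$. A leaf of depth $\ge d$ therefore lies in the support of some shearing at a stage where depths $\ge d$ first appear. Each maximal connected component $V^i_d$ is the interior of one product region $\Sigma^i\times(0,1)$ of an intermediate foliation, foliated by iterated shearing. Its boundary leaf $\Sigma^i$ has depth exactly $d-1$: it is a leaf that the depth-$d$ leaves spiral onto, and it is not itself in the interior of a sheared product at that level. The hardest step is the last claim, that $V^i_d$ is a mapping torus region, meaning it fibers over $S^1$ with fiber a relative depth zero leaf, i.e.\ a leaf of depth $d$. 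I would try to prove this by using the first shear along $\gamma\times[0,1]$, which already turns $\Sigma^i\times(0,1)$ into a fibration over $S^1$. The fiber is the depth-$d$ leaf obtained by cut-and-paste, and the monodromy translates along the $\gamma$-direction. Subsequent shearings happen inside product subregions between depth-$d$ leaves. I would then check, with an auxiliary transverse flow and blow-down, that these insertions preserve the topological fibration with the same fiber, up to homeomorphism. Each later shear only thickens a fiber into a product $L\times[0,1]$ refoliated in its interior, which can be absorbed into a reparametrized base circle.
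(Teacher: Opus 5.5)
Your proposal is correct. Parts (1) and (2) follow the paper's argument; your check in (2) that every shearing support meeting $U$ lies inside $U$ is something the paper leaves implicit. For part (3) you take a different route.

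The paper does not track the stages of the construction of $\check{\mc F}$. It proceeds as follows:
\begin{enumerate}
\item It cuts $U$ along a leaf and uses the first return map of a transverse flow to identify the completion with a foliated interval bundle $L\times[0,1]$. From this it concludes that every connected proper saturated subset of $U$ is a product region.
\item It gets openness of $V_d$ from the fact that depth can only drop under limits.
\item It obtains the mapping torus structure by replacing $V_{d+1}$ with trivially foliated products. Then $V_d^i$ becomes a locally top-depth component, and Lemma \ref{lem:top depth is fibered} applies.
\end{enumerate}

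You instead identify each component $V_d^i$ directly as the support of a single reversed collapse of a depth-$(d-1)$ trivially foliated product. This gives the boundary leaf $\Sigma^i$, its depth $d-1$, and the iterated-shear description all at once, which the paper's interval-bundle argument only yields implicitly. The price is the bookkeeping you sketch, and it does go through. Shears do not change the depths of leaves outside their open saturated supports. The boundary leaves of a sheared region can never lie in the interior of a later trivially foliated product, because the leaves adjacent to them on the inner side spiral onto them.

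Your mapping torus argument is the paper's idea read in the opposite direction. One simplification: no thickening or reparametrization of the base circle is needed. In the construction of $\check{\mc F}$, product regions are refoliated rather than collapsed. So later shears are supported in open intervals of fibers of the fibration produced by the first shear, and the remaining depth-$d$ leaves are literally fibers of that same fibration.
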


\begin{proof}
	Throughout, let $U$ be a base region of base depth $\ell$ of the foliation $\mc F$, and let $V_d$ be defined as in the lemma statement. 
	\begin{enumerate}
		\item Suppose $U$ were a product region of the form $L\times (0,1)$. By construction of $\check{\mc F}$, we must have that the foliation $\check{\mc F}\mid_U$ is by parallel copies of $L$. However, since $U$ is a topological product, we also have that the boundary leaves of $\wh U$ in $\check{\mc F}$ are homeomorphic to $L$. Thus, $U$ is not a locally top depth component of $\check{\mc F}$, contradicting the definition of a base region.
		
		\item The foliation $\check{\mc F}$ is obtained inductively by replacing locally top-depth product regions with trivially foliated products. We work inductively in reverse: suppose $V$ is a locally top-depth product region, of $\mc F$, so $\wh V \cong L \times [0, 1]$, with $\mc F$ restricting to a relative depth one foliation with no holonomy in the interior. Then the next step of the induction is obtained by replacing $\mc F$ in $V$ with the product foliation by copies of $L$. By Lemma \ref{lem:juncturesinproduct}, the reverse direction is given by shearing. 
		
		\item Let $\phi$ be a flow transverse to $\mc F$ which defines a dynamical product on each topological product. Let $L_d$ be a depth $d$ leaf of $U$. Then since $U$ fibers over $S^1$, it is not hard to see that $\phi$ defines a first return map on $L_d$, thereby giving a homeomorphism $\wh{U \setminus L_d} \cong L_d \times [0, 1]$ in which $\mc F$ is transverse to the interval fibers. This shows that any connected component of a saturated set $V\subsetneq U$ is a product region. In particular, each connected component of $V_d$ is a product region. To see that it is open, observe that a limit of points lying in leaves of depth $k$ must be in a leaf of depth $\leq k$, and so the complement of $V_d$ is closed. Now the same argument as prior shows that each $V^i_d$ is obtained by iterated shearing. To see that $V^i_d$ is a mapping torus region, observe that in $V_d$, $\mc F$ is obtained by iterated shearing, and hence by replacing the topological product $V_{d+1}$ with a trivially foliated product, we obtained a foliation in which $V_d$ is a locally top-depth component. Therefore $V_d$ topologically fibers over $S^1$ with fibers the depth $d$ leaves.
	\end{enumerate}
\end{proof}

By the above lemma, a base region is a mapping torus region, and in this case the fiber depth agrees with the base depth.

We now work in a single base region $U$. Suppose $U$ has depth $k$ (i.e. the maximal depth of all leaves of $\mc F$ in $U$ is $k$) and suppose it has base depth $\ell$. For each $i \in \ell, ..., k-1$, we associate to a depth $i$ leaf $L$ the positive and negative junctures $j^\pm(L)$ from the spiralling of depth $i + 1$ leaves onto $L$. If $L$ lies in a trivially foliated product $L \times [0, 1]$, we identify the cohomology of $L \times \{0\}$ and $L\times \{1\}$ so that we may consider both positive and negative junctures as lying in $L$.

\begin{definition} Using the notation above, say $L$ has \defn{connecting junctures} if 
\[
j^+(L) = -j^-(L).
\]
Say a mapping torus component $V$ in $U$ has \defn{connecting junctures} if every leaf realising the fiber depth does. Say $U$ has \defn{connecting junctures} if all mapping torus components of $U$ do.
\end{definition}

See Figure \ref{fig:connectingjunctures} for an illustration of a leaf with connecting junctures.

\begin{figure}[ht]
	\centering
	\includegraphics[width=0.5\linewidth]{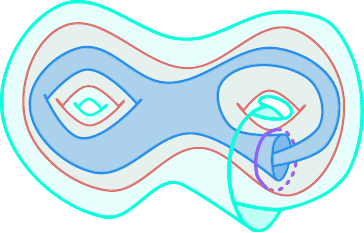}
	\caption{The central orange surface has connecting junctures.}
	\label{fig:connectingjunctures}
\end{figure}

\medskip

The next two sections tackle Proposition \ref{prop:junctures capstone} in two cases: the case where $\mc F$ has only finitely many locally top-depth regions, and the case where it has infinitely many. In the latter case we introduce a generalization of the notion of a juncture, which we call a \emph{limit juncture}.

\subsection{Connecting junctures, finite case}
Let $\mathcal{F}$ have depth $n$, and suppose throughout this section that $\mathcal{F}$ has finitely many locally top-depth regions. Our goal in this subsection is to prove Proposition \ref{prop:junctures capstone} with this assumption. We prove it in full generality in the next subsection, but it is easier to first demonstrate the ideas in the case of finitely many locally top-depth regions. 

\medskip

We first show that having connecting, compactly supported junctures bounds the relative depth of a base region. The $C^2$ hypothesis in Theorem \ref{thma:classification} is used to guarantee $\mc F$ has compactly supported junctures so that we may use the following result.

\begin{proposition}\label{prop:C2connectinghasdepth1}
	Suppose $U$ is a base region of depth $k$ and base depth $\ell$. If $U$ has connecting, compactly supported junctures, then $U$ has relative depth at most one.
\end{proposition}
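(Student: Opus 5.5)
We argue by contradiction: assume $k \geq \ell + 2$. We will find a depth $\ell+1$ leaf inside a mapping torus region whose fiber depth is $\ell+1$, and show that connecting, compactly supported junctures force the spiraling of depth $\ell+2$ leaves onto it to be trivial.

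By Lemma \ref{lem:base region structure}, $V_{\ell+2}$ is a nonempty union of open product regions $V^i_{\ell+2} \cong \Sigma^i \times (0,1)$. Each is a mapping torus region whose fiber depth is $\ell+1$, and its foliation is obtained from a product foliation by iterated shearing. Fix one such component $V$ with fiber leaf $\Sigma = \Sigma^i$ of depth $\ell+1$. Then $\wh V$ is a topological product $\Sigma \times [0,1]$, and the depth $\ell+2$ leaves of $V$ spiral onto $\Sigma \times\{0\}$ on the positive side and onto $\Sigma\times\{1\}$ on the negative side. Replace $V_{\ell+3}\cap V$ by trivially foliated products, as in the construction of $\check{\mc F}$. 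This does not change $j^\pm(\Sigma)$. After it, $V$ becomes a locally top-depth product region with boundary leaves $\Sigma_\pm$. Lemma \ref{lem:juncturesinproduct} then gives $j^+(\Sigma_-) = -j^-(\Sigma_+)$.

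We use the identification of $H^1(\Sigma\times\{0\})$ with $H^1(\Sigma \times\{1\})$ through the product structure. Connecting junctures for $V$ means $j^+(\Sigma) = -j^-(\Sigma)$, where $j^-(\Sigma)$ is now read on the same leaf $\Sigma$. The key step is to compare the two relations. The leaves $\Sigma_-$ and $\Sigma_+$ are both copies of the depth $\ell+1$ leaf $\Sigma$. Inside the ambient mapping torus region of fiber depth $\ell+1$, the leaf $\Sigma$ is a fiber, and $\Sigma_+$ is its image under the first-return (monodromy) map of that region. So the two relations force the juncture class to be invariant under the monodromy $f$, acting on $H^1_c(\Sigma)$ by the compact support hypothesis. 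The class is also Poincaré dual to the curve system $\gamma$ along which the shearing took place.

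The aim is to derive a contradiction from this invariance. Here $\Sigma$ is a depth $\ell+1$ leaf in a base region of base depth $\ell$, so it is itself an infinite-type leaf spiraling onto depth $\ell$ leaves, with generalized endperiodic monodromy. The dual of a compactly supported class is a finite multicurve or a compact system of arcs. We would show that a compactly supported class invariant under an endperiodic-type monodromy either yields an $f$-periodic finite multicurve, contradicting atoroidality of $M$ (as in the proof of Theorem \ref{thm:locallytopdepthclosingagain}), or is zero, contradicting primitivity of the juncture class. Compact support is exactly what excludes junctures dual to lines that run out the ends and could be translated by $f$.

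The main obstacle is making precise the claim that connecting junctures plus Lemma \ref{lem:juncturesinproduct} yields $f$-invariance. This requires care with the identifications of cohomology across $\wh V$ and across the ambient fibration, including orientation conventions. If the invariance argument fails, we would instead try to show directly that a depth $\ell+2$ leaf spiraling with connecting junctures is forced to be a leaf of a fibration over the interval. That would make $V$ trivially foliated, contradicting the existence of depth $\ell+2$ leaves.
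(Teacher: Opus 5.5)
There is a genuine gap in the step that is supposed to turn invariance into a contradiction. Reducing to an invariant juncture class is the right idea, and working with the depth $\ell+1$ fibers after collapsing $V_{\ell+3}$, instead of the depth $k-1$ fibers of $V_{k-1}$ as the paper does, is harmless. But you then claim that a compactly supported class invariant under an endperiodic-type monodromy either comes from an $f$-periodic finite multicurve, contradicting atoroidality, or is zero. Homological invariance does not give an invariant multicurve. For example, let $M$ be hyperbolic and fibered with $b_1(M)\ge 2$. The pseudo-Anosov monodromy fixes a primitive class $[\gamma]$ in the homology of the fiber but fixes no multicurve. Shearing a product slab of that fibration along $\gamma\times[0,1]$ gives a base region of relative depth one whose junctures connect and are compactly supported. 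That situation is allowed by the proposition. Passing to endperiodic maps does not help: the property that rules out an invariant compactly supported juncture class is not atoroidality, primitivity or endperiodicity. An argument built only from those cannot separate the forbidden case from the allowed one.

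The missing idea is the one the paper uses. The mapping torus region in which your $\Sigma$ is a fiber is a component of $V_{\ell+1}$. By Lemma \ref{lem:base region structure}(3) it is a topological product, so its monodromy $f$ is homotopic to a translation. Now let $j$ be compactly supported and choose a loop $\alpha$ with $j(\alpha)\neq 0$, which exists because a juncture class is a nonzero homomorphism. For large $n$, $f^n(\alpha)$ misses the support of $j$, so $j(f^n_*\alpha)=0\neq j(\alpha)$, and $j$ cannot be $f$-invariant. This contradiction must be read in $H^1(\Sigma)$, not in $H^1_c(\Sigma)\cong H_1(\Sigma)$. Indeed, a lift of the shearing curve to a $\Z$-cover is a nonzero translation-invariant element of $H_1$ that vanishes as a functional.

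There are also two smaller problems.
\begin{itemize}
\item $V^i_{\ell+2}$ has fiber depth $\ell+2$, not $\ell+1$; $\Sigma^i$ is its boundary leaf, and it is a fiber of a component of $V_{\ell+1}$.
\item $\Sigma_+$ is the image of $\Sigma_-$ under the flow across the single gap $V$, not under the first-return map. To get invariance you must chain Lemma \ref{lem:juncturesinproduct} with the connecting hypothesis across every depth-$(\ell+2)$ gap around the circle. The paper does this by closing up the annuli $\gamma_i\times[0,1]$ with subsurfaces of the fiber leaves.
\end{itemize}
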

\begin{proof}
	Let $U$ be a base region of depth $k$ and base depth $\ell$, and suppose $U$ has compactly supported junctures. Suppose that $U$ has relative depth higher than one, i.e. $k-\ell>1$. 
	
	Let 
	\[
	V_{k-1}=\{L\;\mid\;L\text{ a leaf in }U\text{ of depth }\geq k-1\},
	\]
	as in Lemma \ref{lem:base region structure}. Suppose for simplicity that $V_{k-1}$ is connected; the same argument applies to each connected component of $V_{k-1}$. Since $k-1>\ell$, we have that $V_{k-1}$ is not all of $U$, and is instead a product region of the form 
	\[
	\Sigma\times(0,1)
	\]
	for $\Sigma$ a leaf of $\mc F\mid_U$ of depth $k-2$ by Lemma \ref{lem:base region structure}. Furthermore, $V_{k-1}$ fibers over $S^1$ with fiber $L_{k-1}$, a leaf of depth $k-1$ in $V_{k-1}$.
	
	Since $U$ has compactly supported junctures, the monodromy 
	\[
	f\colon L_{k-1}\to L_{k-1}
	\]
	for the fibration structure above is a generalized endperiodic map. Furthermore, $f$ is homotopic to a translation, since the mapping torus is a topological product. Thus, $f$ has no eigenvectors on $H^1_c(L_{k-1})$, the compactly supported cohomology of $L_{k-1}$.
	
	On the other hand, we now claim that if $U$ has connecting junctures, then the map $f$ \emph{does} have an eigenvector on $H^1_c(L_{k-1})$. Suppose toward contradiction that $U$ has connecting junctures. Within $V_{k-1}$, the depth $k$ leaves are contained in saturated mapping tori of the form 
	\[
	L_{k-1}\times (0,1),
	\]
	and these are divided by depth $k-1$ trivially foliated products of the form
	\[
	L_{k-1}\times [0,1].
	\]
	Label these regions as 
	\[
	W_1',W_1,W_2',W_2,\ldots,W_r', W_r,
	\]
	where the $W_i'$ are the depth $k-1$ regions, the $W_i$ are the depth $k$ regions, and consecutive indices correspond to adjacent regions, ordered cyclically to agree with coorientation. 
	
	Let $\phi$ be a transverse flow on $V_{k-1}$ turning each $W_i'$ into a dynamical product, and use this flow to blow down each $W_i'$ into a single leaf
	\[
	L^i\cong L_{k-1}.
	\]
	Note now that the positive and negative boundary components of $\wh W_i$ are $L^i$ and $L^{i+1}$ respectively, with indices taken mod $r$.
    
	For each $i$, let $\gamma_i$ be a representative in $H_1(L^i)$ of the Poincar\'e dual of $j^+(L^i)$. This $\gamma_i$ can be chosen to be a compact, immersed 1-manifold since the junctures are compactly supported. Let 
	\[
	A_i = \gamma_i \times [0,1],
	\]
	where we identify $\wh W_i$ with $L^i \times [0, 1]$. Then $A_i$ is an immersed annulus with boundary 
	\[
	-\gamma \times \{0\} \sqcup \gamma \times \{1\}.
	\]
	Its positive boundary is a representative of $j^-(L^{i+1})$. Denote $\partial_- A_i = -\gamma \times \{0\}$ and $\partial_+ A_i = \gamma \times\{1\}$.
	
	Since the junctures connect, modulo $n$ we have that 
	\[
	[\partial_- A_i] = [\partial_+ A_{i-1}] \in H_1(L^i).
	\] 
	Therefore we can connect them with compact immersed subsurfaces $S^i$ contained in $L^i$. These close up to a closed, homologically nontrivial surface in $V$, so the Poincar\'e dual of the juncture class provides an eigenvector of the monodromy on $H_1(L_{k-1})$. Since the junctures are compactly supported, this gives an eigenvector in $H^1_c(L_{k-1})$. This produces the desired contradiction.
\end{proof}

\begin{remark}
Keeping the notation of the proof above, a similar argument shows that if $V_{k-1}$ has connecting junctures which are not compactly supported, then the monodromy has an eigenvector on $H^1(L_{k-1})$. However, translations have many such eigenvectors, and so this phenomenon does indeed occur. The surface representatives now lie in $H_2(\wh V_{k-1}, \partial \wh V_{k-1})$.
\end{remark}

\begin{figure}[ht]
	\centering
	\includegraphics[width=0.55\linewidth]{despinning.pdf}
	\caption{A fiber leaf arising from connecting junctures.}
	\label{fig:despinning}
\end{figure}

Building on Proposition \ref{prop:C2connectinghasdepth1}, we now show that if a base region $U$ has relative depth at most one, then the foliation can be approximated by fibrations within that base region. The idea behind this is the `despinning construction' as in \cite{Gabai1987, landry2023endperiodic}. Figure \ref{fig:despinning} illustrates the simplest example, where the base region $U$ is all of $M$, the depth zero leaves are fiber surfaces for a fibration structure on $M$, and the depth one leaves form product regions between the depth zero leaves. Choose a leaf in each depth one region, take a sufficiently large subsurface, and push this subsurface so that the boundary is contained in the depth zero leaves cobounding the depth one region. The connecting junctures hypothesis implies that these subsurfaces can be made to connect up, forming a single fiber surface shown in purple in the figure. By taking larger and larger subsurfaces, one obtains foliations by fibers that approximate the original depth one foliation.

\begin{lemma}\label{lem:connectingapproxfinite}
	Suppose $U$ is a base region of $\mc F$ of relative depth at most one. Then in $U$, $\mc F$ can be approximated by fibrations.
\end{lemma}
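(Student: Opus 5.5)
The plan is to carry out the despinning construction sketched above, following Thurston and Landry--Minsky--Taylor. The approximating fibrations will come from cut-and-paste of the fiber leaves with compact pieces of the depth $k$ leaves. First suppose $U$ has relative depth zero. Then $U$ is a locally top-depth component of $\check{\mc F}$, and $\mc F|_U$ coincides with $\check{\mc F}|_U$ up to trivially foliated products. By Lemma \ref{lem:top depth is fibered}, $\mc F|_U$ already fibers over $S^1$, so there is nothing to prove beyond blowing down the trivial products.

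So assume $U$ has base depth $\ell$ and depth $\ell+1$. By Lemma \ref{lem:base region structure}, the set $V_{\ell+1}$ is a union of product regions $W_1,\dots,W_r$ (finitely many, by our standing assumption). Each is of the form $L^i\times(0,1)$, where $L^i$ is a depth $\ell$ fiber leaf. The $W_i$ are separated by trivially foliated products of depth $\ell$ leaves, which I blow down using a transverse flow $\phi$ that is a dynamical product there. By Lemma \ref{lem:juncturesinproduct}, each $\wh W_i$ is obtained from $L^i\times[0,1]$ by shearing along $\gamma_i\times[0,1]$. Here $\gamma_i$ is Poincar\'e dual to $j^+(L^i)=-j^-(L^{i+1})$, and it is compact because the junctures are compactly supported. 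The connecting junctures hypothesis gives $j^+(L^{i+1})=-j^-(L^{i+1})$. Hence the curve $\gamma_i\times\{1\}$ arriving at $L^{i+1}$ is homologous in $L^{i+1}$ to $\gamma_{i+1}\times\{0\}$, and I choose compact subsurfaces $S^{i+1}\subset L^{i+1}$ cobounding them.

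Next I build the fibration. In each $\wh W_i$, take a depth $\ell+1$ leaf and truncate it to a large compact subsurface $\Lambda_i^N$ spiraling $N$ times around each end. I isotope $\Lambda_i^N$ along $\phi$ so that its boundary lies in $L^i\cup L^{i+1}$ and agrees with copies of $\gamma_i$ there. I then glue the $\Lambda_i^N$ to the $S^i$ along these boundaries and smooth the corners, transverse to $\phi$. This yields a closed surface $F_N\subset U$ transverse to $\phi$ and meeting every flowline. Its class is dual to a closed 1-form $\omega_N$ that is positive on $\phi$. Because $\phi$ meets $F_N$ and the region between consecutive copies is a product, $F_N$ is a fiber of a fibration of $U$, and the fibration can be taken tangent to $\ker\omega_N$. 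As $N\to\infty$, $F_N$ agrees with the depth $\ell+1$ leaves on larger and larger compact pieces. The parts of $F_N$ near the $L^i$ are obtained by adding small multiples of the juncture annuli to the fiber leaves, so $\ker\omega_N\to T\mc F$ in $C^0$ on $U$.

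The hard part will be the $C^0$ convergence near the depth $\ell$ leaves. Far out along the spirals, $F_N$ is close to $L^i$ only if the tilt introduced by the cut-and-paste along $\gamma_i$ is small. My first attempt is to write $\omega_N=\omega_0+\tfrac1N\eta$, where $\omega_0$ is a defining form for the fiber leaves $L^i$ (degenerate on the $W_i$) and $\eta$ is supported near the juncture annuli. I would then check that the kernels converge uniformly, using the compact support of the junctures to keep $\eta$ compactly supported.
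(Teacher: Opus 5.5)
The despinning mechanism is the right one, but as written your construction only proves the case $U=M$. You truncate a depth $\ell+1$ leaf of $W_i$ to a \emph{compact} $\Lambda_i^N$ with all of its boundary in $L^i\cup L^{i+1}$. You then glue up a \emph{closed} surface $F_N\subset U$ and argue it is a fiber via a closed 1-form.

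A base region of base depth $0$ has no frontier leaves, so it is clopen and equals $M$. Whenever $U\neq M$ you therefore have $\ell\ge 1$: the $L^i$ are noncompact leaves spiraling onto $\partial\widehat U$, and since $U$ is noncompact, no fibration of $U$ over $S^1$ can have closed fibers. Concretely, a leaf of $\widehat W_i\cong L^i\times[0,1]$ is a cut-and-paste of infinitely many copies of $L^i\setminus\gamma_i$. Besides its ends spiraling onto $L^i$ and $L^{i+1}$, it has ends modeled on the ends of $L^i$. Any compact truncation therefore has boundary far out toward $\partial\widehat U$ that cannot be capped off compactly.

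This has three consequences your argument does not address:
\begin{itemize}
\item The approximating fibers must retain these ends and spiral onto $\partial\widehat U$.
\item The new foliation must agree with $\mathcal{F}$ near $\partial\widehat U$ in order to extend to a foliation of $M$.
\item Your fiber criterion (a closed transverse surface meeting every flowline, dual to a closed 1-form) is unavailable on the noncompact $U$, where orbits through the arms of the octopus decomposition cross $U$ and leave it.
\end{itemize}

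The missing step is the paper's reduction to the carrying sutured manifold $N$ from Lemma \ref{lem:octopus}. You enlarge $N$ so that $\mathcal{F}|_{\partial_\pitchfork N}$ is a trivial foliation by circles; this uses that there are finitely many locally top-depth components and that their junctures are compact. The pullback $\mathcal{G}$ on $N$ is depth two, and you despin there. Each truncated piece keeps finitely many ends parallel to the ends of $L^i$, and its boundary is made exactly the juncture representatives, as in \cite[Lemma 3.7]{landry2023endperiodic}. This produces fibrations of the interior of $N$ tangent to $\mathcal{G}$ along $\partial_\pitchfork N$. That boundary condition is what lets you change $\mathcal{F}$ only inside $N$, leaving it untouched in the arms and outside $U$.

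Separately, your proposed $C^0$ check fails as stated. If $\omega_0$ vanishes on the $W_i$ and $\eta$ is supported near the juncture annuli, then $\omega_0+\frac{1}{N}\eta$ vanishes on most of each $W_i$, so its kernel is not a plane field there. Nor can you repair this by making $\omega_0$ nondegenerate: no closed form defines $\mathcal{F}$ near the $L^i$, since those leaves have nontrivial holonomy. The convergence has to be argued from the construction of the fibers themselves, which agree with leaves of $\mathcal{F}$ on larger and larger compact pieces, rather than from this ansatz.
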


\begin{proof}
	If $U$ is a base region of relative depth zero, it is a locally top-depth component and the result follows from Lemma \ref{lem:top depth is fibered}. So, let $U$ be a base region of $\mc F$ of relative depth one. Fix a smooth flow $\phi$ transverse to $\mc F$ and defining a dynamical product on every product region. Let $N$ be an immersed carrying sutured manifold for $U$, as given by the Octopus Decomposition Lemma \ref{lem:octopus}. Enlarging $N$ if necessary, we assume the restriction of $\mc F$ to $\partial_\pitchfork N$ is a trivial foliation by circles. This is possible since there are finitely many locally top-depth components in $U$, and each one has compact junctures. We work in $\wh N$, where $\mc F$ pulls back to a foliation $\mc G$. Since $U$ is relative depth one, the foliation $\mc G$ is depth two.

	As above, let $L_0, ..., L_n$ be depth one fiber leaves separating the top-depth regions $W_1, ..., W_n$ in $\mc G$. For each positive juncture $j^+(L_i)$, choose a compact, embedded 1-manifold realization $m_i$. Since the junctures connect, we can take $-m_i$ as a representative of $j^-(L_i)$. 
	
	We now truncate the depth 2 leaves in each $W_i$ and push them into $L_i$ as follows. Let $S_i$ be a fiber leaf of $W_i$, take a large subsurface $S_i'\subset S_i$, and push the negative boundary slightly along $\phi$ into $L_i$ and push the positive boundary into $L_{i+1}$ (modulo $n$). Each $S_i$ has finitely many ends parallel to each end of $L_i$, some number of boundary components meeting $\partial_\pitchfork N$ tangent to $\mc G$, and compact negative and positive boundary components  lying in $L_{i}$ and $L_{i+1}$, respectively. As in \cite[Lemma 3.7]{landry2023endperiodic}, following ideas of Gabai \cite[Lemma 0.6]{Gabai1987} , we may choose these $S_i'$ so that the boundary is precisely $-m_i \cup m_{i+1}$, and so each $S_i'$ glues up (along with product annuli $m_i \times [0,1]$ for each $L_i$ which lies in a product region) to obtain a compact surface $\Sigma$. 
	
	Taking a sequence of such $\{S_{i_j}'\}_{j\in\N}$ exhausting each $S_i$, we obtain a sequence $\{\Sigma_j\}_{j\in\N}$ of surfaces as above with boundary tangent to $\mc G$ in $\partial_\pitchfork N$. For sufficiently large $j$, each $\Sigma_j$ intersects every orbit of $\phi$ infinitely many times and hence is a fiber of a fibration $\mc G_k$ of the interior of $N$, tangent to $\mc G$ along $\partial_\pitchfork N$. The boundary condition allows us to modify $\mc F$ in the interior of $N$ to a foliation $\mc F_j$ which is a fibration in $U$. The foliations $\mc F_j$ $C^0$-converge to $\mc F$, as desired.
\end{proof}

We now handle what happens when $\mc F$ has a base region without connecting junctures. We show below that this results in a curve of two-sided attracting holonomy or an annulus of attracting holonomy. When a base region of $\mc F$ does not have connecting junctures, the curve or annulus of attracting holonomy produced by Lemma \ref{lem:connectingholonomyfinite} will allow us to insert a plug transverse to $\mc F$ in a neighborhood of that curve or annulus, as discussed in Section \ref{sec:plugs}.

\begin{lemma}\label{lem:connectingholonomyfinite}
	Suppose $\mc F$ is $C^2$ and has no toral leaves, and let $U$ be a base region of $\mc F$. If $U$ does not have connecting junctures, then $U$ has a leaf $L$ with a nonseparating curve of two-sided attracting holonomy or has an annulus of attracting holonomy.
\end{lemma}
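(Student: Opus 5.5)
Since $U$ does not have connecting junctures, there is a mapping torus component $V \subseteq U$ and a leaf $L$ of $V$ realising the fiber depth with $j^+(L) \neq -j^-(L)$. Here $j^\pm(L)$ are the juncture classes of the depth-$(i+1)$ leaves spiralling onto $L$ from either side. If $L$ sits in a trivially foliated product $L\times[0,1]$, we use the identification of cohomologies fixed above. In that case the failure is along an annulus $\gamma\times[0,1]$, and we aim for an annulus of attracting holonomy rather than a single curve. Otherwise we aim for a curve in $L$ itself. By the Cantwell--Conlon theorem, the $C^2$ hypothesis makes both $j^+(L)$ and $j^-(L)$ compactly supported. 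Recall the observation from the background section: if a positive juncture class evaluates positively on a loop $\gamma$, then $\gamma$ has attracting holonomy on the positive side. The analogous statement holds for negative spirals, with sign conventions arranged so that $j^-(\gamma)>0$ gives attraction on the negative side (this is consistent with Lemma \ref{lem:juncturesinproduct}, where connecting means $j^+=-j^-$). Thus the goal reduces to a purely cohomological one. We need a nonseparating simple closed curve $\gamma$ in $L$ with
\[
j^+(L)(\gamma)>0 \quad\text{and}\quad j^-(L)(\gamma)>0 .
\]

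We produce $\gamma$ as follows. Both classes are primitive, and since $j^+\neq -j^-$, the class $j^+ + j^-$ is nonzero. We then split into two cases.

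\textbf{Case 1: $j^+$ and $j^-$ are linearly independent.} Both restrict to nonzero classes on some compact subsurface $R\subset L$ of genus at least one that carries their support; here we use compact support and that $L$ is not a torus or planar, which follows from the no-toral-leaves hypothesis together with atoroidality. Viewed as linear functionals on $H_1(R)$, the half-spaces $\{j^+>0\}$ and $\{j^->0\}$ then intersect in an open cone. Every primitive class in $H_1(R)$ is represented by a nonseparating simple closed curve when $R$ has genus, and the cone contains primitive integral classes. This gives $\gamma$.

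\textbf{Case 2: $j^- = c\,j^+$ with $c\neq -1$.} Since both classes are primitive, $c=1$, so $j^-=j^+$. Any nonseparating curve $\gamma$ with $j^+(\gamma)=1$ then works. Such a curve exists because $j^+$ is primitive and compactly supported: it is Poincar\'e dual to a nonseparating curve $m$, and we take $\gamma$ to be a dual curve meeting $m$ once.

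Finally, to feed Lemma \ref{lem:pluginsertion} later, we note that $\gamma$ nonseparating yields a curve $\gamma_2$ meeting $\gamma$ once; this is automatic.

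The main obstacle is the trivially foliated product case and the sign bookkeeping. When $L$ is a blown-down thick product $L\times[0,1]$, the positive spiral lives on $L\times\{1\}$ and the negative one on $L\times\{0\}$. We then take $A=\gamma\times[0,1]$ tangent to a flow giving a dynamical product, and check that $\partial_-A$ is attracting on the negative side and $\partial_+A$ on the positive side. This check uses the same evaluation argument, provided the orientation conventions identifying $H^1(L\times\{0\})$ with $H^1(L\times\{1\})$ are tracked carefully. I expect verifying that the sign convention genuinely makes ``non-connecting'' equivalent to the existence of a common positive cone to be the delicate point. Along the way we must also rule out degenerate situations, such as a leaf $L$ without genus where the support lives, using the no-toral-leaves hypothesis and atoroidality.
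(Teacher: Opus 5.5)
Your proposal is correct and follows essentially the same route as the paper: reduce to finding a class in the open cone $\{j^+>0\}\cap\{j^->0\}$ (nonempty exactly because $j^+\neq -j^-$ and both classes are primitive), realize a primitive integral class there by a simple closed curve, and in the thick case blow the trivially foliated product back up to obtain an annulus of attracting holonomy. Two small points. Your Case 2 split is unnecessary, since it is the same cone argument. Your claim that every primitive class in $H_1(R)$ of a bordered surface is represented by a nonseparating simple closed curve is false as stated (boundary-parallel classes are counterexamples), so argue nonseparation as the paper does instead: a simple closed curve pairing positively with a compactly supported juncture meets its compact dual algebraically nonzero times, and hence cannot separate.
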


\begin{proof}
	Let $\mc F$ and $U$ be as in the lemma statement, and let $L$ be a depth $i$ leaf realizing the fact that $U$ does not have connecting junctures, i.e. $L$ satisfies
	\[
	j^+(L) \neq -j^-(L)\;\;\text{in}\; \; H^1(L,\R),
	\]
	possibly after identifying $L$ with a leaf cobounding a trivially foliated product with $L$. If this is the case, blow down that product so that $L$ is isolated in the set of depth $i$ leaves.
	
	Since $\mc F$ is $C^2$ and has no toral leaves, all leaves of positive depth have all ends accumulated by genus by \cite[Corollary 8.4.7]{candel_foliations_1999}. In particular, no matter the depth, the dimension of $H_1(L, \R)$ is at least 2. The classes $j^+(L)$ and $j^-(L)$ both define linear functionals on $H_1(L, \R)$. The open half-spaces 
	\[
	P^\pm = \{a \in H_1(L, \R) \mid j^\pm(a) > 0\}
	\]
	have nonempty intersection if and only if $j^+(L) \neq -j^-(L)$. Therefore there is a nonempty open cone $C = P^+ \cap P^-$ such that any simple closed curve with homology class lying in $C$ has attracting holonomy.
	
	We now claim we can find such a curve which is nonseparating. Since the junctures of $\mc F$ are compactly supported, any simple closed curve whose homology class lies in $C$ must have positive signed count of intersections with a compact representative of the Poincar\'e duals of the junctures $j^\pm(L)$, and so is nonseparating. Since $C$ is open, we can find a primitive integral homology class to obtain the desired curve.
	
	If $L$ did not lie on the boundary of a trivially foliated product, we have produced a curve of two-sided attracting holonomy and are done. Otherwise, blow the product region back up and observe that the preimage under the blowup of the curve of attracting holonomy is an annulus of attracting holonomy.
\end{proof}

We end this subsection with the following observation. If $U$ is a base region with junctures that don't connect, then Theorem \ref{thma:classification} implies that it is not approximated by fibrations (by Lemma \ref{lem:approxedimpliesforcing}, combined with the construction of aperiodic flows). However, there is an easier way to see this, following ideas of \cite{andrade_foliations_2003}.

\begin{lemma}\label{lem:holonomy implies not fibr}
	Suppose a base region $U$ has a curve or annulus of two-sided attracting holonomy. Then it is not approximated by fibrations.
\end{lemma}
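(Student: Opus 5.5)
We argue by contradiction, in the spirit of \cite{andrade_foliations_2003}. Suppose $\mc F_j$ is a sequence of foliations, each a fibration on $U$, with $T\mc F_j \to T\mc F$ in the $C^0$ topology. Fix a smooth flow $\phi$ transverse to $\mc F$. Transversality is an open condition, so for large $j$ the flow $\phi$ is also transverse to $\mc F_j$.

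The plan is to show that the attracting holonomy forces $\phi$ to have an invariant set. This set can be trapped by a plug, which contradicts the fact that every flow transverse to a fibration is a suspension flow. We treat the curve case first; the annulus case reduces to it by blowing down the product region, as in Section \ref{sec:annulusholonomy}.

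Let $\gamma \subset L$ be the curve of two-sided attracting holonomy. Our first route avoids plugs altogether. Take a thin transverse annulus $A = \phi^{[-\varepsilon,\varepsilon]}(\gamma)$. Since $\gamma$ attracts on both sides, $\mc F$ restricted to $A$ is a foliation by arcs and a closed leaf $\gamma$, and the arcs spiral into $\gamma$ from both sides. The $C^0$-closeness of $T\mc F_j$ to $T\mc F$ means that for large $j$ the line field $T\mc F_j \cap TA$ is $C^0$-close to $T\mc F\cap TA$. In particular it still points inward along both boundary circles of $A$. By Poincaré--Bendixson, the induced foliation on $A$ then has a closed leaf $\gamma_j$ isotopic to $\gamma$, lying in a leaf $F_j$ of $\mc F_j|_U$.

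Next, $\gamma_j$ has one-sided contracting holonomy in $A$ on at least one side. Indeed, the Poincaré--Bendixson limit cycle is semi-attracting for the flow along the line field restricted to $A$, and this holonomy restricted to $A$ agrees with the leaf holonomy of $\mc F_j$ along $\gamma_j$. But $\mc F_j$ is a fibration on $U$, so every leaf of $\mc F_j$ in $U$ has trivial holonomy. A contracting return map along $\gamma_j$ contradicts this. That gives the contradiction, assuming $\gamma_j \subset U$, which holds once $\varepsilon$ is small because $\gamma \subset U$ is compact.

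A backup route, if the holonomy-comparison step proves delicate, is to use plugs instead. Apply Lemma \ref{lem:pluginsertion} (or Lemma \ref{lem:annularpluginsertion} in the annulus case) to get a transverse plugged flow $\phi_P$ trapping an orbit. Since transversality is open, $\phi_P$ is also transverse to $\mc F_j$ for large $j$. The trapped orbit is then a forward orbit that never returns to a fiber of $\mc F_j|_U$, which is impossible for a flow transverse to a fibration.

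The main obstacle is making the limit-cycle argument rigorous for merely $C^0$ line fields, where uniqueness of integral curves can fail. We avoid this issue because $\mc F_j$ is an honest foliation: its leaves restricted to $A$ form a genuine $1$-dimensional foliation. The claim that any leaf in $A$ limiting on $\gamma_j$ yields nontrivial germinal holonomy then follows directly from the definition of holonomy, via a transversal arc in $A$. We also need to ensure $\gamma_j$ is essential in $A$ rather than a null-homotopic circle; this holds because a foliation of an annulus transverse to the boundary flow direction has no Reeb-type disks.
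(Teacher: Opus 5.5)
Your closing contradiction is sound. If the return map of $\mathcal{F}_j\cap A$ on a vertical arc sends the arc strictly into its interior, then its topmost fixed point (the $\omega$-limit of the leaf entering through the upper boundary) gives a closed leaf whose holonomy is strictly contracting on one side. A fibration of $U$ cannot have such a leaf.

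The gap is in the step that feeds it: ``it still points inward along both boundary circles of $A$.'' For $A=\phi^{[-\varepsilon,\varepsilon]}(\gamma)$ this already fails for $\mathcal{F}$. Attracting holonomy constrains only the full return map, not the slope of $\mathcal{F}\cap A$ at each point of $\phi^{\pm\varepsilon}(\gamma)$. Suppose, for instance, that $\mathcal{F}$ is a product over most of $\gamma$ and all the contraction happens over a short arc. Then $\phi^{\pm\varepsilon}(\gamma)$ is tangent to $\mathcal{F}$ along a long arc. There, $C^0$-closeness says nothing about which way $T\mathcal{F}_j\cap TA$ tilts, so leaves of $\mathcal{F}_j\cap A$ can exit $A$ and Poincar\'e--Bendixson in $A$ is unavailable.

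The repair is to change the boundary. For small $\delta>0$, the leaf of $\mathcal{F}\cap A$ through $\phi^{\delta}(x)$ returns strictly lower after one turn. Tilting it upward closes it into a curve $c_+\subset A$ isotopic to $\gamma$ and transverse to $\mathcal{F}$; likewise one gets $c_-$ below $\gamma$. Inward-pointing along $c_\pm$ is an open condition, so it persists for $\mathcal{F}_j$, and your argument then runs in the annulus between $c_-$ and $c_+$.

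Once you have $c_\pm$, however, the limit cycle is superfluous, and this is exactly the paper's proof. The curve $c_+$ is positively transverse and $c_-$ negatively transverse to $\mathcal{F}$, hence to $\mathcal{F}_j$ for large $j$. So for the fibration $p_j\colon U\to S^1$, the loop $p_j\circ c_+$ has positive degree and $p_j\circ c_-$ has negative degree. This is impossible because $c_+$ and $c_-$ are isotopic in $U$.

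The annulus case works the same way: take $c_+$ a push-off of $\partial_+A$ to the positive side of $L_+$ and $c_-$ a push-off of $\partial_-A$ to the negative side of $L_-$; these are isotopic through $A$. Your reduction by blowing down the product region would also require blowing down the approximating fibrations $\mathcal{F}_j$, which is not automatic.

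Finally, the backup route fails as stated. If $\phi_P$ is transverse to the fibration $\mathcal{F}_j$, then along any orbit that stays in the compact plug, the lifted circle coordinate of $p_j$ increases without bound. So the trapped orbit crosses every fiber infinitely often. Trapped orbits and compact invariant sets are no obstruction to transversality to a fibration.
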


\begin{proof}
	Let $U$ be a base region for a foliation $\mc F$, and let $\gamma$ be a curve of attracting holonomy (or be a core curve of an annulus of attracting holonomy). Suppose toward contradiction that $\mc F\mid_U$ is approximated by a sequence $\{\mc F_i\}_{i\in\N}$ of fibrations in $U$. 
	
	By the assumption on $\gamma$, it is isotopic in $U$ to a curve positively transverse to $\mc F$ and hence positively transverse to $\mc F_i$ for $i$ sufficiently large. Likewise, it is isotopic in $U$ to a curve negatively transverse to $\mc F$ and hence negatively transverse to $\mc F_i$ for $i$ sufficiently large. This is a contradiction since the $\mc F_i$ are fibrations in $U$.
\end{proof}

\subsection{Connecting junctures, infinite case}
We now modify the techniques and definitions of the previous subsection to handle the general case, where $\mc F$ has infinitely many locally top-depth regions. Observe that if $\mc F$ has infinitely many locally top-depth components, then the compactness of $M$ implies all but finitely many of them are topological products. In particular, $\mc F$ has only finitely many base regions. For convenience, we set the following convention.

\begin{convention}
	For the remainder of this section, fix a smooth flow $\phi$ transverse to $\mc F$ defining a dynamical product on every product region.
\end{convention}

Let $U$ is a base region with infinitely many locally top-depth components. We develop a juncture theory for $U$ and show that the same dichotomy holds as in the finite case, namely that in $U$ either $\mc F$ is approximable by fibrations, or contains a curve or annulus of attracting holonomy. As was the case for the finite case, we work in mapping torus components. Let $V$ be a mapping torus component of $U$ with fiber depth $\ell$.  

Let $\mc A = \{\Sigma_a\}_{a \in A}$ be the set of depth $\ell$ leaves in $V$. Since these leaves are fibers of a fibration over $S^1$, we topologize the index set $A$ as a subset of $S^1$. The index set $A$ is compact. For convenience we assume $A$ is totally disconnected. If $A$ contains a compact interval, we will do as in the finite depth case, identifying endpoint leaves and working only with the `outside pointing' junctures on the endpoint leaves. The set $A$ inherits a local order from the coorientation of $\mc F$, that is, every $a \in A$ has a neighborhood which is totally ordered, and this order is inherited by subsets and compatible on intersections.

Let $\Sigma_a$ for $a \in A$ be a depth $\ell$ leaf of $\mc F$. Suppose $a \in A$ is isolated on the negative side (given by the local order around $a$). Then we define the \defn{negative juncture} $j_-(\Sigma_a)$ as usual, as the juncture of the depth one leaves spiraling onto $\Sigma_a$ on the negative side. As we are assuming $\mc F$ is $C^2$-smoothable, junctures are compact, so  $j_-(\Sigma_a) \in H^1_c(\Sigma_a)$.

Now suppose $a$ is not isolated on the negative side. For sufficiently small $\varepsilon$, say smaller than some $\varepsilon_0$, our transverse flow $\phi^t$ defines a half-neighborhood 
\[
N_\varepsilon = \phi^{(-\varepsilon, 0]}(\Sigma) \cong \Sigma \times (-\varepsilon, 0]
\] 
where $\mc F$ restricts to a foliation transverse to the interval fibers. Then sliding along $\phi$ induces a homeomorphism $p_b: \Sigma_a \to \Sigma_b$ for any depth $\ell$ leaf $\Sigma_b\subset N_\varepsilon$.

Define
\[J^-_\varepsilon(\Sigma_a) = \{p_b^*(j_-(\Sigma_b)) \mid \Sigma_b \subset N_\varepsilon \text{ is isolated on the negative side}\},\]
and define the \defn{negative limit juncture}
\[J^-(\Sigma_a) = \bigcap_{ 0>\varepsilon>\varepsilon_0}J^-_\varepsilon (\Sigma_a) \subset H^1(\Sigma_a).\]
Define the \defn{positive limit juncture} $J^+(\Sigma_a)$ the same way, except take $\varepsilon > 0$. Note that $J^\pm(\Sigma_a)$ does not depend on the choice of $\varepsilon_0$ appearing in the definition.

Since $A$ is compact and totally disconnected, it is homeomorphic to a subset of the Cantor set, and therefore the set of points which are negatively (resp. positively) isolated is dense.

Note that if $\Sigma_a$ is isolated on the negative (resp. positive) side, $J^-(\Sigma_a) = \{j^-(\Sigma_a)\}$ (resp. $J^+(\Sigma_a) = \{j^+(\Sigma_a)\}$), and so the definition of limit juncture recovers the standard definition of juncture..

\begin{definition}
	Say that the mapping torus component $V$ has \defn{connecting (limit) junctures} if for all $a \in A$, 
	\[
	J^+(\Sigma_a) = -J^-(\Sigma_a)\hspace{1cm}\text{and}\hspace{1cm}|J^+(\Sigma_a)| = |J^-(\Sigma_a)| = 1.
	\]
	Say the base region $U$ has \defn{connecting (limit) junctures} if all mapping torus components of $U$ do.
\end{definition}

We begin with a preliminary lemma.
\begin{lemma}\label{lem:connectingimpliesconstant}
	Suppose $U$ has relative depth one and has connecting limit junctures, and let $N$ be a closed topological product with distinct relative depth zero negative and positive boundary leaves $\Sigma_-$ and $\Sigma_+$. Then under the homeomorphism $\Sigma_- \cong \Sigma_+$ induced by sliding along flow lines of $\phi$ in $N$, $J^-(\Sigma_-) = -J^+(\Sigma_+)$.
\end{lemma}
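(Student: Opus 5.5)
Set $\Sigma = \Sigma_-$ and identify every relative depth zero leaf of $N$ with $\Sigma$ by sliding along $\phi$. Since $N$ is a closed topological product in which $\phi$ is a dynamical product, each such slide is a well-defined homeomorphism. Let $A_N \subset A$ be the compact, locally ordered index set of relative depth zero leaves in $N$. It is an interval-ordered set with minimum $-$ and maximum $+$. For $a \in A_N$, the hypothesis says that $J^\pm(\Sigma_a)$ are single classes $j^\pm_a$ with $j^+_a = -j^-_a$, pulled back to $H^1(\Sigma)$. The statement then reduces to showing that $a \mapsto j^-_a$ (equivalently $a\mapsto -j^+_a$) is constant on $A_N$. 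The plan is to prove local constancy and conclude by connectedness of the ordered interval $[-,+]$ together with compactness.

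\emph{Gaps.} Let $(a,b)$ be a gap of $A_N$, meaning there are no depth zero leaves strictly between $\Sigma_a$ and $\Sigma_b$. The region between them consists of relative depth one leaves and is a topological product. By Lemma~\ref{lem:juncturesinproduct} applied to this locally top-depth product region, we get $j^+(\Sigma_a) = -j^-(\Sigma_b)$ after identifying via $\phi$. Combined with connecting junctures at $a$, this gives $j^-_a = -j^+_a = j^-_b$. So the assignment is constant across gaps.

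\emph{Limit points.} If $a$ is not isolated on the negative side, the definition of $J^-(\Sigma_a)$ as the intersection of the sets $J^-_\varepsilon$, together with the fact that this intersection is a single class, shows the following. For all sufficiently small $\varepsilon$, the negatively isolated $b$ within $\varepsilon$ below $a$ have $j^-_b = j^-_a$. Here I use that the classes are integral, primitive and compactly supported, and that the sets $J^-_\varepsilon$ are nested. The same holds on the positive side.

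\emph{Conclusion.} Let $a^* = \sup\{a\in A_N : j^-_c = j^-_{-}\ \text{for all } c\le a\}$. If $a^*$ is approached from below, the limit-point step gives $j^-_{a^*}=j^-_-$. If $a^*$ is isolated from below, the gap step gives the same. To go past $a^*$, use either the gap step or the positive-side limit statement at $a^*$, which transfers through $j^+_{a^*}=-j^-_{a^*}$. In either case $a^*=+$.

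The main obstacle is the limit-point step. One has to check that the nested sets $J^-_\varepsilon$ with singleton intersection are eventually singletons. This amounts to a finiteness statement: only finitely many distinct juncture classes occur among the leaves near $\Sigma_a$. I would try to get it from compactness of $N$ and the $C^2$ hypothesis, which give a uniform compact support and a uniform bound on the junctures in a neighborhood. Failing that, I would rephrase the sup argument so that it only uses membership $j^-_a\in J^-_\varepsilon$.
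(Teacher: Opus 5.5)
Your route is the paper's route: local constancy of $a\mapsto j^-_a$ from the singleton limit junctures, equality across gaps from Lemma~\ref{lem:juncturesinproduct} plus the connecting hypothesis, and then connectedness of $[-,+]$. The step you flag is exactly the one the paper asserts without justification (``since $J^-(x)$ is a singleton, by the definition of limit junctures $J^-(z_i)$ is eventually constant''). You are right to distrust it: nested sets $J^-_\varepsilon=\{j\}\cup\{k_n : n\ge N(\varepsilon)\}$, with distinct $k_n\neq j$ and $N(\varepsilon)\to\infty$, have intersection $\{j\}$ but are never singletons.

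So, as written, your argument has a gap at the step ``go past $a^*$'' when $a^*$ is accumulated from the right. There you need $j^-$ to be constant on a right neighbourhood of $a^*$, and that is precisely the unproved eventual constancy. Reaching $a^*$ from below is fine: every negatively isolated leaf below $a^*$ already carries $j^-_-$, so $J^-_\varepsilon(\Sigma_{a^*})=\{j^-_-\}$.

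Neither of your remedies is the right one. Nothing available gives a uniform bound on the junctures near a leaf. Membership $j^-_a\in J^-_\varepsilon$ alone is too weak, because without the exclusion half of the singleton hypothesis nothing prevents other classes from accumulating.

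What closes the gap is to use both halves of the singleton hypothesis and to argue backwards from a bad point. Let $v=j^-_-$ and $S=\{a\in A_N : j^-_a=v\}$.

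\emph{First, $S$ is closed.} Suppose $x\notin S$. Then $v\notin J^-(\Sigma_x)$ and $-v\notin J^+(\Sigma_x)$, so by nestedness there is $\varepsilon>0$ with $v\notin J^-_\varepsilon(\Sigma_x)$ and $-v\notin J^+_\varepsilon(\Sigma_x)$. Any $c\in A_N$ with $x-\varepsilon<c<x$ has $j^-_c$ equal to $j^-$ of a negatively isolated leaf in that window, hence $j^-_c\neq v$. Any $c$ with $x<c<x+\varepsilon$ has $j^+_c$ equal to $j^+$ of a positively isolated leaf in that window, hence $j^-_c=-j^+_c\neq v$.

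\emph{Second, points of $S$ are followed by points of $S$.} Let $p\in S$ with $p\neq +$. If $p$ is positively isolated, the next point of $A_N$ lies in $S$ by your gap step. Otherwise $-v=j^+_p\in J^+_\varepsilon(\Sigma_p)$ for every $\varepsilon$. This yields a positively isolated $b\in(p,p+\varepsilon)$ with $j^+_b=-v$, i.e.\ $j^-_b=v$.

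\emph{Conclusion.} Suppose some $y\in A_N$ is not in $S$. The compact set $S\cap[-,y]$ has a maximum $p<y$, and the second property produces a point of $S$ in $(p,y)$, a contradiction. Hence $S=A_N$, and $J^-(\Sigma_-)=\{v\}=\{j^-_+\}=-J^+(\Sigma_+)$. No finiteness or uniformity of junctures is needed.
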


\begin{proof}
	We can index the relative depth zero leaves of $\mc F$ in $N$ by a set $A_N$, which we consider as a subset of a flowline of $\phi$ with the subspace topology, which we in turn identify with the interval $[0, 1]$.
	
	We consider the limit junctures $J^\pm$ as functions $A_N \to H^1(\Sigma_-)$. We claim that by the connecting limit junctures hypothesis, for every $x \in A_N$ there is an open neighborhood $B_x$ such that $J^-$ (and hence also $J^+$) is constant. Let $\{y_i\}_{i\in\N} \subseteq A_N$ be a sequence converging to $x$. By passing to a subsequence, either all $y_i < x$ or all $y_i > x$. First suppose $y_i < x$. Then there is a sequence $z_i \to x$ such that each $z_i$ is isolated on the negative side and $J^-(z_i) = J^-(y_i)$. Now since $J^-(x)$ is a singleton, by the definition of limit junctures $J^-(z_i)$ is eventually constant.
	
	On the other hand, suppose $x < y_i$ for all $i$. Then there is a sequence $w_i$ converging to $x$ such that each $w_i$ is isolated on the negative side and $J^-(y_i) = J^-(w_i)$ for all $i$. Each $w_i$ cobounds a gap with some point $t_i$, and so by Lemma \ref{lem:juncturesinproduct}, $J^-(w_i) = j^-(w_i) = -j^+(t_i) = -J^+(t_i)$. However, for $t_i$ close enough to $x$, since all junctures are singletons, $-J^+(t_i) = -J^+(x) = J^-(x)$, applying the connecting junctures hypothesis. Therefore, for sufficiently large $i$, $J^-(y_i) = J^-(x)$. This shows that $J^-$ is locally constant.
	
	To conclude that a function on a subset of a Cantor set in $[0, 1]$ is constant, it suffices to show that it is both locally constant and that its values on endpoints of gaps are equal. But the latter claim follows again by Lemma \ref{lem:juncturesinproduct} and the connecting junctures hypothesis. Therefore $J^-$ (and hence also $J^+$) is constant on $A_N$. This concludes the proof.
\end{proof}

We now proceed as in the finite case, first showing that the $C^2$ hypothesis allows us to assume we are working in relative depth at most one. We then show this implies the foliation is approximable by fibrations, and finally that when this is not the case, we can find a curve or annulus of attracting holonomy.

\begin{proposition}\label{prop:connecting implies rel depth 1}
	Suppose $U$ has compactly supported junctures and connecting limit junctures. Then $U$ has relative depth at most one.
\end{proposition}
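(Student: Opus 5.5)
We will follow the proof of Proposition \ref{prop:C2connectinghasdepth1} closely, replacing junctures by limit junctures and using Lemma \ref{lem:connectingimpliesconstant} to handle the infinitely many locally top-depth regions. Suppose $U$ has base depth $\ell$ and depth $k$ with $k-\ell>1$. By Lemma \ref{lem:base region structure}, a component $V$ of $V_{k-1}$ is a product region $\Sigma\times(0,1)$ with $\Sigma$ of depth $k-2$. It is also a mapping torus region with fiber depth $k-1$, fibering over $S^1$ with fiber $L_{k-1}$. Since junctures are compactly supported, the monodromy $f\colon L_{k-1}\to L_{k-1}$ is a generalized endperiodic map. Because $\wh V$ is a topological product, $f$ is homotopic to a translation and so has no eigenvector in $H^1_c(L_{k-1})$. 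We aim to contradict this by building, from connecting limit junctures, a closed homologically nontrivial surface in $V$ transverse to the flow direction.

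First, decompose $V$ using the closed index set $A\subset S^1$ of depth $k-1$ leaves.
\begin{itemize}
\item Compact intervals in $A$ are trivially foliated products, which we blow down along $\phi$ as in the finite case.
\item Gaps of $A$ are depth $k$ product regions $W$. By Lemma \ref{lem:juncturesinproduct}, the juncture on the negative side of $\wh W$ is minus that on the positive side, after identifying them by sliding along $\phi$.
\end{itemize}
Next, apply Lemma \ref{lem:connectingimpliesconstant} to $V$ (or to $V$ cut open along one depth $k-1$ leaf $\Sigma_{a_0}$, which is a closed topological product with distinct boundary leaves). It shows that, under the identification $p_b$ induced by $\phi$, all limit junctures are a single class $\pm j\in H^1_c(L_{k-1})$. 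Its hypothesis is stated for relative depth one, so we apply it to $V$ itself, where the relative depth is one by choice of $k-1$. Going around $S^1$ once, the identification $\Sigma_{a_0}\to\Sigma_{a_0}$ is exactly the monodromy $f$, so constancy gives $f^*j=j$, and $j$ is nonzero because junctures are primitive.

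By compact support, $j$ is Poincaré dual to a compact 1-manifold $\gamma$. We want a closed surface realizing $j$, hence an eigenvector of $f$ in $H^1_c(L_{k-1})$, contradicting the translation property. Build it as in the finite case:
\begin{itemize}
\item Take product annuli $\gamma\times[0,1]$ in the thin gaps.
\item Connect consecutive boundaries by compact subsurfaces in the depth $k-1$ leaves, whose homology classes agree by the constancy above.
\end{itemize}
The main obstacle is that there are infinitely many gaps, so this surface cannot be assembled gap by gap. To avoid this, we would instead argue cohomologically: constancy of $J^\pm$ on $A$, together with the fact that all but finitely many gaps are thin and $\phi$-close to the identity, shows directly that $f^*j=j$. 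We then derive the contradiction from the fixed class alone, since a translation acts on $H^1_c$ with no nonzero fixed vectors. Verifying that the translation property rules out fixed compactly supported classes is routine.
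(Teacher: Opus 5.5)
Your route differs from the paper's only at the end, and the obstacle that pushes you there is not real. The paper never assembles the surface gap by gap. It fixes one depth-$k$ gap $W$ of $V$, so that the closure $N$ of $V\setminus W$ is a \emph{single} closed topological product with distinct boundary leaves. Lemma \ref{lem:connectingimpliesconstant} on $N$ and Lemma \ref{lem:juncturesinproduct} on $W$ then let one flow one annulus across all of $N$ along $\phi$ and one across $W$, with matching boundary classes, and capping these off gives the closed surface. This choice of $N$ also keeps you within the hypotheses of Lemma \ref{lem:connectingimpliesconstant}; your $V$ cut along one leaf $\Sigma_{a_0}$ has both boundary leaves equal to $\Sigma_{a_0}$. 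Composing those two relations around the circle is exactly your invariance $f^*j=j$. So skipping the surface and contradicting the invariance directly is a legitimate alternative. It is also cleaner in one respect: it never caps off boundary curves by compact subsurfaces, which needs the classes to agree in $H_1$ of the leaf, whereas the connecting hypothesis only gives agreement in $H^1$.

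The problem is your final step as written. The limit junctures, and hence the relation $f^*j=j$, live in $H^1(L_{k-1})$, not $H^1_c(L_{k-1})$. A compactly supported lift of $j$ is only invariant modulo $\ker(H^1_c\to H^1)$. Moreover, ``a translation has no nonzero fixed vector on $H^1_c$'' is false in general. Since $H_2(L_{k-1})=0$, the Wang sequence of $V\cong\Sigma\times(0,1)$ identifies the $f_*$-fixed part of $H_1(L_{k-1})\cong H^1_c(L_{k-1})$ with $H_2(V)\cong H_2(\Sigma)$. When $\Sigma$ is closed (base depth $0$, $k=2$) this is $\Z$, spanned by a separating, peripheral class that dies in $H^1$. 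The paper's finite-case argument makes the same assertion.

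What you actually need, and what really is routine, is that no nonzero $f^*$-invariant class of $H^1(L_{k-1})$ lies in the image of $H^1_c(L_{k-1})$. Represent $j$ by a cocycle supported in a compact set $K$. For any loop $c$, $j(c)=j(f^n_*c)$ for all $n$. Since $\phi$ is a dynamical product on $V$, the first-return map $f$ moves points monotonically up the interval fibers toward $\Sigma\times\{1\}$, so by compactness of $c$ we get $f^n(c)\cap K=\emptyset$ for $n$ large. Hence $j$ vanishes on every loop, contradicting $j\neq 0$. With this replacement your proof goes through.
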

\begin{proof}
	The proof is exactly the same as in the finite case; the connecting limit junctures hypothesis allows us to build a homology class in a connected relative depth one region $V$ topologically fibering over $S^1$ with generalized endperiodic monodromy. Choose a single locally top-depth region and use Lemma \ref{lem:connectingimpliesconstant} to insert annuli in the other product regions with boundaries representing the junctures of the finitely many chosen locally top-depth regions. This gives the desired homologically nontrivial closed surface in $V$.
\end{proof}

\begin{lemma}\label{lem:connectingapproxinfinite}
	Suppose $U$ is a base region of relative depth at most one, has compactly supported junctures, and has connecting limit junctures. Then in $U$, $\mc F$ can be approximated by fibrations.
\end{lemma}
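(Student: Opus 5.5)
The plan is to run the despinning argument of Lemma \ref{lem:connectingapproxfinite}. The one new ingredient is the infinite family of locally top-depth components in $U$. Relative depth zero is again handled by Lemma \ref{lem:top depth is fibered}, so we assume $U$ has relative depth one. Let $V$ be a mapping torus component with fiber depth $\ell$, and let $\{\Sigma_a\}_{a\in A}$ be its depth $\ell$ leaves. Since $M$ is compact, all but finitely many of the gaps of $A$ (the depth $\ell+1$ regions) are topological products. Hence we can choose finitely many leaves $\Sigma_{a_0},\dots,\Sigma_{a_{n}}$, cyclically ordered, with two properties: every non-product gap $W_i$ lies between consecutive chosen leaves, and each remaining complementary piece is a closed topological product $N_i$ bounded by relative depth zero leaves. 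We choose these so that each piece between consecutive chosen leaves is either a single non-product gap or a closed product $N_i$ as in Lemma \ref{lem:connectingimpliesconstant}.

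Next we fix juncture representatives. The connecting limit junctures hypothesis makes each $J^\pm(\Sigma_{a_i})$ a singleton with $J^+=-J^-$. Since junctures are compactly supported, we pick a compact embedded $1$-manifold $m_i\subset\Sigma_{a_i}$ Poincar\'e dual to $j^+$. On each product piece $N_i$, Lemma \ref{lem:connectingimpliesconstant} shows that sliding along $\phi$ carries $m_i$ to a representative of the correct juncture on the other boundary leaf. We therefore join the two boundary leaves of $N_i$ by the flow-out annulus $m_i\times[0,1]$, which is transverse to $\mc F$ and tangent to $\phi$. On each non-product gap $W_i$ we do exactly what the finite case does. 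We take a depth $\ell+1$ fiber leaf $S_i$ and a large compact subsurface $S_i'$, push its boundary along $\phi$ into the adjacent chosen leaves, and arrange, via Landry--Minsky--Taylor \cite[Lemma 3.7]{landry2023endperiodic} and Gabai \cite[Lemma 0.6]{Gabai1987}, that $\partial S_i'=-m_i\cup m_{i+1}$. As before, we first pass to an octopus-decomposition sutured manifold $N$ on which $\mc F$ restricts to circles on $\partial_\pitchfork N$. This is where compact support is used: it must be arranged uniformly, including across the infinitely many product gaps, and I would obtain this by taking $N$ large with respect to the finitely many chosen leaves and their junctures.

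Gluing the subsurfaces $S_i'$ and the annuli $m_i\times[0,1]$ yields a compact surface $\Sigma_j$ transverse to $\phi$ with boundary tangent to $\mc F$ along $\partial_\pitchfork N$. As $S_i'$ exhausts $S_i$, $\Sigma_j$ meets every orbit of $\phi$ in $N$ infinitely often. Hence $\Sigma_j$ is a fiber of a fibration of $\operatorname{int}N$ that agrees with $\mc F$ near $\partial_\pitchfork N$, and extending by $\mc F$ outside $N$ produces $\mc F_j$.

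The main obstacle is $C^0$-convergence $\mc F_j\to\mc F$ inside the product pieces $N_i$. There the fibration is built from the annuli $m_i\times[0,1]$ rather than from leaves close to those of $\mc F$, and $N_i$ contains infinitely many spiraling gaps. To control this, I would make the product pieces $N_i$ thin in the flow direction by adding more chosen leaves as $j\to\infty$. Then the annuli $m_i\times[0,1]$ are short and nearly tangent after tilting, and the tangent planes of $\mc F_j$ stay within any prescribed angle of $T\mc F$.
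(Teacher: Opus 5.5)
\textbf{There is a genuine gap: your rule for which gaps to despin selects none of them.} The overall architecture is the right one, and it matches the paper in spirit: despin finitely many gaps, cross the remaining pieces with flow-out annuli using Lemma \ref{lem:connectingimpliesconstant}, and make those pieces thin for $C^0$-convergence. The problem is the selection criterion. Inside a base region $U$, every depth $\ell+1$ region is a topological product; this is Lemma \ref{lem:base region structure}(3). The fact that all but finitely many locally top-depth components are products concerns components of $\mc F$ in $M$, not the gaps inside $U$. So your set of ``non-product gaps'' $W_i$ is empty, and every piece between chosen leaves is a closed product $N_i$. Your $\Sigma_j$ is then a union of flow-tangent annuli $m_i\times[0,1]$. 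It is not a fiber, and the claim that it meets every orbit fails. Adding more chosen leaves in your last paragraph does not fix this, because under your rule a single gap is still a product piece and is still bridged rather than despun.

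\textbf{The missing idea: choose by thickness, not topology.} For each $\varepsilon>0$, only finitely many gaps contain $\phi$-orbit segments of length at least $\varepsilon$. These gaps form a nonempty set once $\varepsilon$ is small, and they are the ones to despin; the thin gaps are lumped into the $N_i$.

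\textbf{How the paper does it.} The paper implements exactly this, in two steps.
\begin{enumerate}
\item It replaces $\mc F$ on every gap of $\phi$-length less than $\varepsilon$ by a product foliation. This gives $\mc F_\varepsilon\to\mc F$ with only finitely many locally top-depth regions.
\item It uses Lemma \ref{lem:connectingimpliesconstant} to show $\mc F_\varepsilon$ has connecting junctures, applies the finite-case Lemma \ref{lem:connectingapproxfinite}, and finishes with a diagonal argument.
\end{enumerate}

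\textbf{What your one-step version would still owe.} With the corrected selection, your route also works, but you must prove by hand two points the paper's reduction gets for free.
\begin{itemize}
\item The $C^0$-estimate inside the thin pieces. Your annuli are tangent to $\phi$, so the climb across $m_i$ must be spread over a collar of fixed width. Then the slope is of order the thickness, and uniform continuity of $T\mc F$ along short flow segments finishes the estimate.
\item The trivial-holonomy (circle) condition on $\partial_\pitchfork N$, uniformly over infinitely many gaps. This follows because the proof of Lemma \ref{lem:connectingimpliesconstant} shows the limit juncture is constant across each closed product piece, so one compact support serves every gap in it. You should state this rather than appeal to ``finitely many chosen leaves.''
\end{itemize}
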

\begin{proof}
	Let $U$ be as in the lemma statement. We show that in $U$, $\mc F$ can be $C^0$-approximated by a foliation with finitely many locally top-depth components and connecting junctures. In some Riemannian metric, since $U$ has infinitely many locally top-depth regions, for any $\varepsilon > 0$ it has locally top-depth regions $V$ such that every orbit segment of $\phi$ contained in $V$ has length less than $\varepsilon$. Then in all such $V$ we replace $\mc F$ with a product foliation. As $\varepsilon \to 0$, we get a sequence $\mc F_\varepsilon$ of foliations with finitely many locally top-depth regions in $U$ which approaches $\mc F$.
	
	For a fixed $\varepsilon$, let $V_\varepsilon$ be the union of all of these new product regions and let $N$ be a connected component of its closure. For $\varepsilon$ sufficiently small, $N$ is not all of $U$. Then $N$ is a trivially foliated product region, and its positive and negative boundary components $\Sigma_+$ and $\Sigma_-$ bound gaps of $A$, the index set for the lowest depth leaves in $U$. By Lemma \ref{lem:connectingimpliesconstant}, $J_-(\Sigma_-) = -J_+(\Sigma_+)$ in $\mc F$, and since we have only replace leaves in the interior of $N$ this also holds for $\mc F_\varepsilon$. Therefore, we see that $\mc F_\varepsilon$ has connecting junctures. Since $\mc F_\varepsilon$ has finitely many locally top-depth regions, we may apply Lemma \ref{lem:connectingapproxfinite} to conclude that in $U$, $\mc F_\varepsilon$ is approximated by fibrations. Finally, a diagonal argument shows $\mc F$ is approximated by fibrations in $U$.
\end{proof}

\begin{lemma}\label{lem:connectingholonomyinfinite}
	Suppose $U$ does not have connecting limit junctures. Then $U$ has a nonseparating curve or annulus of attracting holonomy.
\end{lemma}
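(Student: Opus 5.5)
The approach is to reduce to the mechanism of Lemma \ref{lem:connectingholonomyfinite}. The key step is to locate a single depth $\ell$ leaf $\Sigma_a$ in some mapping torus component $V$ of $U$ whose junctures fail to connect. Near $\Sigma_a$ there should be two ordinary, compactly supported junctures, one on each side, whose associated half-spaces in $H_1$ intersect. The open cone argument then produces a nonseparating curve of two-sided attracting holonomy, or an annulus of attracting holonomy.

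Since $U$ does not have connecting limit junctures, some mapping torus component $V$ and some $a\in A$ violate one of the conditions $|J^\pm(\Sigma_a)|=1$ or $J^+(\Sigma_a)=-J^-(\Sigma_a)$. I split into two cases.

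\emph{Case 1: both limit junctures are singletons, say $J^\pm=\{j_\pm\}$, but $j_+\neq -j_-$.} By the definition of limit juncture, for small $\varepsilon$ every negatively isolated $\Sigma_b$ in the negative half-neighborhood $N_\varepsilon$ has $p_b^*j_-(\Sigma_b)=j_-$, and similarly on the positive side. If $a$ is isolated on both sides, this is exactly the setting of Lemma \ref{lem:connectingholonomyfinite}. Otherwise, pick $b<a<c$ close to $a$, with $\Sigma_b$ negatively isolated and $\Sigma_c$ positively isolated. Consider the topological product region between $\Sigma_b$ and $\Sigma_c$; it is a dynamical product for $\phi$. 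Choose a class in the open cone $\{j_->0\}\cap\{j_+>0\}\subset H_1(\Sigma_a,\R)$; this cone is nonempty because $j_+\neq -j_-$ and $\dim H_1\geq 2$, using \cite[Corollary 8.4.7]{candel_foliations_1999} and the absence of toral leaves. Take a primitive integral simple closed curve $\gamma$ in this cone; it is nonseparating by the compact support argument. Flowing $\gamma$ along $\phi$ to $\Sigma_b$ and $\Sigma_c$ gives an annulus $A$ tangent to $\phi$ whose ends have attracting holonomy on the outer sides. This is an annulus of attracting holonomy in the sense of Section \ref{sec:annulusholonomy}.

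\emph{Case 2: some limit juncture, say $J^-(\Sigma_a)$, is not a singleton.} Then there are two distinct classes $j_1,j_2\in J^-(\Sigma_a)$, each realized by negatively isolated leaves $\Sigma_b$ arbitrarily close to $a$ on the negative side. I pick $b_1<b_2<a$ close to $a$ with $p_{b_1}^*j_-(\Sigma_{b_1})=j_1$ and $p_{b_2}^*j_-(\Sigma_{b_2})=j_2$. The goal is a gap between consecutive leaves where the two sides disagree. Between $\Sigma_{b_1}$ and $\Sigma_{b_2}$ the junctures vary. By Lemma \ref{lem:juncturesinproduct}, across each gap the positive juncture on the lower leaf is minus the negative juncture on the upper leaf. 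If every leaf in $[b_1,b_2]$ had connecting singleton junctures, the locally-constant-plus-gap argument of Lemma \ref{lem:connectingimpliesconstant} would force the negative juncture to be constant on $[b_1,b_2]$, contradicting $j_1\neq j_2$. Hence some leaf in $[b_1,b_2]$ falls into Case 1, or again into Case 2 at smaller scale. To avoid infinite regress, I would instead directly exhibit an isolated leaf $\Sigma_c$ in $[b_1,b_2]$, or a gap, at which $j^+\neq -j^-$, which then falls under Case 1 or Lemma \ref{lem:connectingholonomyfinite}. Concretely, let $c$ be the supremum of those points in $[b_1,b_2]$ up to which $J^-$ equals $j_1$; at $c$ either a singleton fails or a gap-endpoint disagreement occurs.

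The main obstacle is Case 2: making the supremum argument rigorous on a Cantor-like index set. Local constancy in Lemma \ref{lem:connectingimpliesconstant} used both conditions at every point. Here I need to extract one point where the failure is of Case 1 type, with singletons on both sides, rather than a cascade of non-singleton limits. The fallback is to use the two distinct nearby junctures $j_1$, and $-j^+$ from the adjacent gap, directly. These live on leaves $\Sigma_{b}$, $\Sigma_{b'}$ that cobound a dynamical product, so an annulus of attracting holonomy between them can be built exactly as in Case 1.
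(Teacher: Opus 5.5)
\paragraph{Assessment.} Your Case 1 is the paper's argument: take a negatively isolated leaf below $\Sigma_a$ and a positively isolated leaf above it whose junctures fail to connect, run the open-cone argument, and flow the curve across the product to get the annulus. Two small fixes are needed there. First, allow $b=a$ or $c=a$ when $\Sigma_a$ is isolated on only one side. Second, $J^-(\Sigma_a)=\{j_-\}$ does not force every nearby negatively isolated leaf to have juncture $j_-$, because a nested intersection of possibly infinite sets can be a singleton. You only need one such leaf, and that one does exist.

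The genuine gap is in Case 2. You do not carry out the supremum argument, and it is aimed at the wrong target: you never need a point where the failure is of singleton type. Your fallback is the right kind of move, but as stated it does not say which gap to use, and the choice matters. If it is the gap on the negative side of $\Sigma_{b_1}$, then its lower boundary leaf has $-j^+=j_1$ by Lemma \ref{lem:juncturesinproduct}, since junctures across a gap always connect, so nothing is produced. What works is to let $\Sigma_{b'}$ be the lower boundary of the gap on the negative side of $\Sigma_{b_2}$. Then $b_1\le b'<b_2$, and under the flow identifications $-j^+(\Sigma_{b'})=j_2\neq j_1=j^-(\Sigma_{b_1})$. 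If $b'>b_1$, the leaves $L_-=\Sigma_{b_1}$ and $L_+=\Sigma_{b'}$ cobound a dynamical product with the spiralling on their outer sides, and the cone argument gives an annulus of attracting holonomy. If $b'=b_1$, then $\Sigma_{b_1}$ is isolated on both sides with $j^+\neq -j^-$, and Lemma \ref{lem:connectingholonomyfinite} applies.

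Case 2 also assumes $J^-(\Sigma_a)$ has two distinct elements, but that intersection can be empty. What your argument actually uses is $|J^-_\varepsilon(\Sigma_a)|\ge 2$ for small $\varepsilon$, and this holds in the empty case too.

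\paragraph{The paper's route.} It avoids the case analysis entirely. Suppose that for some small $\varepsilon$, every negatively isolated $S_-\le\Sigma_a$ and every positively isolated $S_+\ge\Sigma_a$ in $N_\varepsilon$ satisfied $j^-(S_-)=-j^+(S_+)$ after sliding along $\phi$. Fixing one $S_+$ then shows $J^-_{\varepsilon'}(\Sigma_a)=\{c\}$ and $J^+_{\varepsilon'}(\Sigma_a)=\{-c\}$ for a single class $c$ and all $\varepsilon'\le\varepsilon$, so the junctures would connect at $\Sigma_a$. Hence any failure of the connecting condition (a mismatch, a non-singleton, or an empty limit juncture) directly gives a non-connecting pair $S_-\le\Sigma_a\le S_+$ inside the flow-product neighborhood, and your Case 1 construction finishes the proof.
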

\begin{proof}
	If $U$ does not have connecting limit junctures, there is some leaf $S$ with $J^+(S) \neq -J^-(S)$. If $S$ is isolated, this is the standard juncture and the argument is the same as in Lemma \ref{lem:connectingholonomyfinite}. If $S$ is not isolated, then by the definition of limit junctures there are leaves $S_-, S_+$, with $S_- \leq S \leq S_+$ in the local coorientation order, such that $S_-$ (resp. $S_+$) is isolated on the negative (resp. positive) side, and under the identifications of their homologies in the product neighborhood they cobound, $j^-(S_-) \neq -j^+(S_+)$. The same argument as in the finite case gives a nonseparating annulus of attracting holonomy.
\end{proof}

\subsection{Wrapping up}
Finally, we put the pieces together to prove Proposition \ref{prop:junctures capstone}, restated as Proposition \ref{prop:junctures capstone again} below.

\begin{proposition}\label{prop:junctures capstone again}
	Suppose $\mc F$ is finite depth and $C^2$, and let $U$ be a base region of $\mc F$. Then the following dichotomy holds.
	\begin{enumerate}
		\item If $U$ has connecting (limit) junctures, then $\mc F$ can be approximated by fibrations in $U$.
		\item If $U$ does not have connecting (limit) junctures, then it contains a nonseparating curve of two-sided attracting holonomy or an annulus of attracting holonomy. 
	\end{enumerate}
\end{proposition}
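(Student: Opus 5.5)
The proof assembles the results of the two preceding subsections; the main bookkeeping is checking their hypotheses. First, since $\mc F$ is $C^2$ and finite depth, the Cantwell--Conlon theorem gives that all junctures of all spirals are compactly supported. This is exactly the standing hypothesis needed in Propositions \ref{prop:C2connectinghasdepth1} and \ref{prop:connecting implies rel depth 1} and Lemma \ref{lem:connectingapproxinfinite}. I would then split according to whether $U$ contains finitely or infinitely many locally top-depth components of $\mc F$. By compactness of $M$, all but finitely many of these are product regions. In the finite case the limit junctures reduce to ordinary junctures, so the two notions of connecting junctures agree.

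For (1), suppose $U$ has connecting (limit) junctures. In the finite case, Proposition \ref{prop:C2connectinghasdepth1} shows that $U$ has relative depth at most one. Lemma \ref{lem:connectingapproxfinite} then gives approximation by fibrations in $U$. In the infinite case, Proposition \ref{prop:connecting implies rel depth 1} again bounds the relative depth by one. Lemma \ref{lem:connectingapproxinfinite} then applies, since we have compact junctures, relative depth at most one, and connecting limit junctures, and it yields the approximation. If the relative depth is zero, $U$ is itself a locally top-depth region fibering over $S^1$ by Lemma \ref{lem:top depth is fibered}, and this case is already handled inside those lemmas.

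For (2), suppose $U$ does not have connecting (limit) junctures. In the finite case I apply Lemma \ref{lem:connectingholonomyfinite}, and in the infinite case Lemma \ref{lem:connectingholonomyinfinite}. Lemma \ref{lem:connectingholonomyfinite} needs the extra hypothesis that $\mc F$ has no toral leaves, which I expect to be the main point to check. Because $M$ is atoroidal, a toral leaf would be compressible. Under the standing convention a non-taut foliation on atoroidal $M$ contains a Reeb component, and for $\mc F$ taut a toral leaf would be $\pi_1$-injective, contradicting atoroidality. So I would either restrict to the taut case, noting that the Reeb case is handled separately via Lemma \ref{lem:reebcomponentforces} in the Characterization Theorem, or argue directly with the leaves of $U$. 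The needed conclusion, that leaves of positive depth have ends accumulated by genus, only uses leaves of $U$. I would apply \cite[Corollary 8.4.7]{candel_foliations_1999} there, noting that within $U$ the relevant lowest-depth leaf $L$ is not a torus. The reason is that $L$ is a fiber of the mapping torus region; if $L$ were a torus, $U$ would contain an incompressible torus or be a Reeb-type region with junctures that automatically connect.

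Finally, I would check that the curve produced is nonseparating, as the argument of Lemma \ref{lem:connectingholonomyfinite} gives via compactly supported junctures. I would also check that it lies in $U$: all leaves used are leaves of $U$, and any blown-up product region lies in $U$. This completes the dichotomy.
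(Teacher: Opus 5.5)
Your proposal follows the paper's proof, which chains Proposition~\ref{prop:connecting implies rel depth 1} with Lemma~\ref{lem:connectingapproxinfinite} for (1) and cites Lemma~\ref{lem:connectingholonomyinfinite} for (2); the paper does not split into finite and infinite cases, because the limit-juncture statements already cover isolated leaves, where $J^\pm$ are the ordinary junctures. Your split is what brings in the no-toral-leaves hypothesis of Lemma~\ref{lem:connectingholonomyfinite}, and neither of your fixes quite works as written: restricting to taut $\mc F$ proves less than stated, and checking that $L$ is not a torus misses the point, since what could spoil the genus of the ends of $L$ is a torus leaf in $\overline{U}\setminus U$ (e.g.\ the boundary of an adjacent Reeb component) that the leaves of $U$ accumulate on. This is easily repaired by citing Lemma~\ref{lem:connectingholonomyinfinite} in both cases as the paper does; moreover, that hypothesis only serves to give $\dim H_1(L;\mathbb{R})\ge 2$, which the cone argument does not need for primitive, compactly supported junctures.
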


\begin{proof}
	Suppose $\mc F$ is finite depth and $C^2$, and let $U$ be a base region. Suppose first that $U$ has connecting (limit) junctures. Then Proposition \ref{prop:connecting implies rel depth 1} implies $U$ has relative depth one, and Lemma \ref{lem:connectingapproxinfinite} applies to show $\mc F$ is approximated by fibrations in $U$.
	
	On the other hand, if $U$ does not have connecting (limit) junctures, then Lemma \ref{lem:connectingholonomyinfinite} produces the desired curve or annulus of attracting holonomy.
\end{proof}

\section{Constructing aperiodic flows}\label{sec:constructingflows}

In this section, we prove the Characterization Theorem (Theorem \ref{thma:classification}), as well as Corollary \ref{cor:blowdown}. The main difficulty is that we must be very efficient in our use of plugs, and so we must choose a starting flow with `sparse' closed orbits, which can be trapped easily by a small number of plugs. In this section we review \emph{Smale flows}, which are the starting flows we will use. 

\subsection{Smale flows}
A flow $\phi$ is a \defn{Smale flow}, after \cite{franks_nonsingular_1985}, if the chain-recurrent set of $\phi$ is hyperbolic, has dimension at most one, and all stable and unstable manifolds intersect transversely.

The relevant criterion for us is that dimension of the chain-recurrent set is 1 (it will never be 0 for a nonsingular flow). This implies that the intersection of the chain-recurrent set with any transverse surface is closed and totally disconnected. The chain-recurrent set, in particular, contains all periodic orbits.

For the proof of Theorem \ref{thma:classification}, we will need the following result.

\begin{theorem}\label{lem:smaleisdense}
    Any smooth vector field is $C^0$-close to a smooth vector field generating a Smale flow.
\end{theorem}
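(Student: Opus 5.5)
The plan is to route the statement through the classical genericity theory for nonsingular flows on closed 3-manifolds rather than to build anything new. Given a smooth nonvanishing vector field $V$ on $M$, first apply the Kupka--Smale theorem together with the Pugh closing/Peixoto-type density results to $C^1$-perturb $V$ to a vector field whose closed orbits are hyperbolic, with transverse stable and unstable manifolds. This is far from enough, since a Kupka--Smale field can have a chain-recurrent set of dimension two or three. So the second step is to invoke the structure theory for nonsingular Morse--Smale and Smale flows (Franks \cite{franks_nonsingular_1985}, building on Asimov's round handle decomposition). The approach is to show directly that in the $C^0$ topology we may replace $V$ by a flow with one-dimensional chain-recurrent set.

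The key construction is local. Fix $\varepsilon>0$ and cover $M$ by finitely many small flow boxes $B_i \cong D^2 \times [0,1]$ in which $V$ is $C^0$-close to the vertical field $\partial_t$. The plan is to modify $V$ box by box so that it is a $C^0$-small perturbation of $\partial_t$. Inside each box, the modified field should carry every orbit entering $\partial_- B_i$ either straight through, or into a small hyperbolic set of dimension at most one: a hyperbolic closed orbit, or a suspended horseshoe. Because only the $C^0$ distance must be small, we may freely insert hyperbolic periodic orbits and collapse recurrent behaviour onto them.

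A cleaner way to organize this, and the route I would try first, is the following.
\begin{itemize}
\item Choose a smooth Lyapunov-type function and a nonsingular Morse--Smale model. Asimov's theorem gives that every closed 3-manifold with $\chi=0$ admits a nonsingular Morse--Smale flow, but that flow is not $C^0$-close to $V$.
\item Instead, take a fine triangulation adapted to $V$. The goal is to construct, in the spirit of Franks, a ``Smale flow template'' approximating $V$: collapse the dynamics of $V$ onto a one-dimensional branched set (a neighborhood of the 1-skeleton transverse to $V$) along which we place a hyperbolic basic set, namely a subshift of finite type suspension.
\item Then check the Smale conditions: hyperbolicity, dimension one, and transversality. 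The last is achieved by a final $C^1$-small Kupka--Smale perturbation that does not destroy the others, since hyperbolic basic sets are structurally stable.
\end{itemize}

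The main obstacle is making the global chain-recurrent set one-dimensional while remaining $C^0$-close to $V$. Local modifications in flow boxes can create chain-recurrence across box boundaries, because chains can hop between boxes. Handling this requires a filtration argument: choose the boxes along a Conley decomposition, with a Lyapunov function for the chain-recurrent set of the perturbed field, so that recurrence is confined to the finitely many inserted basic sets. If this becomes too technical, I would fall back on citing the known result that Smale flows are $C^0$-dense among nonsingular flows. This is a standard consequence of Franks' work and of the density of Axiom A with one-dimensional basic sets in the $C^0$ topology (cf. Pugh--Shub). The theorem would then be a recorded citation rather than a new proof.
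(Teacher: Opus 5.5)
Your proposal does not establish the statement. The constructive route fails at its central step, and the fallback citation points to results that do not contain the theorem.

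\textbf{The local construction cannot work.} In a flow box $B_i\cong D^2\times[0,1]$ on which the modified field is $C^0$-close to $\partial_t$, the $t$-coordinate strictly increases along every orbit. So every orbit crosses $B_i$ in bounded time, and no closed orbit or suspended horseshoe can lie in $B_i$. There is no ``small'' hyperbolic set inside a box to funnel orbits into. Any recurrent set of a $C^0$-perturbation of $V$ is global and threads through many boxes, so all of the content sits in the step you postpone: controlling which chains close up across boxes.

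\textbf{The proposed fixes do not supply that control.} Choosing boxes along a Conley decomposition, using a Lyapunov function for the chain-recurrent set of the perturbed field, is circular, since the perturbed field is what you are trying to construct. ``Collapsing the dynamics onto a neighborhood of the $1$-skeleton transverse to $V$'' is not a defined operation.

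\textbf{The last step falls short.} A Kupka--Smale perturbation gives transversality of stable and unstable manifolds of closed orbits only. It says nothing about the uncountably many non-periodic orbits in a suspended subshift. Transversality is open but not closed, so it does not pass to limits of periodic orbits. The Smale-flow condition therefore is not obtained this way.

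\textbf{The fallback citation is wrong.} Franks \cite{franks_nonsingular_1985} studies the structure of Smale flows (Lyapunov graphs, homological constraints) but does not prove density. The $\Omega$-stability results of Pugh--Shub concern persistence under $C^1$-perturbation, not $C^0$-density.

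\textbf{The missing idea is the one the paper uses.} Start from a global isotopy theorem: Smale's theorem that every diffeomorphism is isotopic to a Smale diffeomorphism \cite{smale_stability_1973}, and de Oliveira's analogue for nonsingular flows \cite{de_oliveira_non-singular_1980}. Then apply Shub's observation \cite{shub_structurally_1972} (see also \cite{zeeman_c0_1972}) that the isotopy can be performed relative to an arbitrarily fine triangulation. That localization is what upgrades ``isotopic to'' into ``$C^0$-close to.'' In those constructions, the no-cycle filtration and the transversality come from the handle structure of the construction itself. You need no after-the-fact Lyapunov function or Kupka--Smale perturbation. Finally, $C^0$-closeness to the nonvanishing field $V$ on the compact manifold $M$ automatically keeps the resulting field nonsingular.
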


The case of diffeomorphisms is due to Shub \cite{shub_structurally_1972}, who presented a simple modification of Smale's weaker result \cite{smale_stability_1973} that any diffeomorphism is isotopic to a Smale diffeomorphism. The analogue of Smale's result for flows is due to de Oliveira \cite{de_oliveira_non-singular_1980}, and it can be seen from the proof that a similar modification to Shub's result can be performed in this setting, i.e. by performing the isotopy relative to a sufficiently small triangulation. This is also sketched in an expository preprint by Zeeman \cite{zeeman_c0_1972}.

\subsection{Proof of the Characterization Theorem}
We wrap up by proving the remaining results of the paper, namely the Characterization Theorem (Theorem \ref{thma:classification}) and its corollary, Corollary \ref{cor:blowdown}. For convenience, we restate these as Theorem \ref{thm:characterization again} and Corollary \ref{cor:blowdown again} below. Note that we have changed the wording of Theorem \ref{thma:classification}, but this does not change the content, as any foliation is approximated by itself.

\begin{theorem}[Characterization Theorem]\label{thm:characterization again}
	Suppose $\mc F$ is finite depth and $C^2$. Then $\mc F$ forces closed orbits if and only if $\mc F$ is $C^0$-approximated by foliations containing locally top-depth components that are not product regions. 
	If $\mc F$ does not force closed orbits, it admits a $C^1$ transverse aperiodic flow.
\end{theorem}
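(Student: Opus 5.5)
The proof splits into the two directions. The positive one is short. Suppose $\mc F$ is $C^0$-approximated by foliations $\mc F_i$, each with a locally top-depth component that is not a product region. Theorem \ref{thm:locallytopdepthclosing} says each $\mc F_i$ forces closed orbits (we may take the approximants finite depth, as they will be in the construction below). Lemma \ref{lem:approxedimpliesforcing} then shows $\mc F$ forces closed orbits.

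For the converse, assume $\mc F$ is not approximated by such foliations. We will build a $C^1$ transverse aperiodic flow. If $\mc F$ had a Reeb component, that component would itself be a non-product locally top-depth component, so $\mc F$ is taut. Since $M$ is atoroidal, there are then no toral leaves. We next apply Proposition \ref{prop:junctures capstone again} to each of the finitely many base regions $U$. Suppose some $U$ had connecting (limit) junctures. Then $\mc F$ would be approximated in $U$ by fibrations. By Lemma \ref{lem:top depth is fibered}, $U$ would become a locally top-depth component of the approximant, and it is not a product by Lemma \ref{lem:base region structure}(1). That contradicts our assumption. Hence every base region contains a nonseparating curve of two-sided attracting holonomy, or an annulus of attracting holonomy. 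Each such curve is dual to some curve meeting it once, so the Plug Insertion Lemma \ref{lem:pluginsertion} or the Annular Plug Insertion Lemma \ref{lem:annularpluginsertion} applies inside $U$.

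Now choose the starting flow. Take a smooth transverse flow that is a dynamical product on every product region. Using Theorem \ref{lem:smaleisdense}, perturb it $C^0$-slightly, keeping it transverse, to a Smale flow $\phi$; we would do the perturbation relative to a neighborhood of the product regions, or restore the dynamical-product property afterward by blowing down. Its chain-recurrent set $\Lambda$ is one-dimensional and contains every closed orbit. Closed orbits cannot live inside dynamical product regions, or inside the interval-bundle parts of octopus decompositions (Lemma \ref{lem:depth1octopus}). So every closed orbit meets the carried sutured manifold of some base region and crosses a chosen leaf $L_U$ of it. On that leaf we may take $L_U$ to be the leaf carrying the attracting curve, or slide along $\phi$ through a dynamical product to reach it. The set $\Lambda\cap L_U$ is compact and totally disconnected. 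A small perturbation, or a careful choice of leaf, puts it inside a single closed disk $C_U$ contained in the neighborhood of $\gamma_1\cup\gamma_2$ (or of the annulus) where the plug goes.

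We then insert one modified Schweitzer plug per base region so that all orbits through $C_U$ are trapped. The plugged flow is $C^1$ and transverse to $\mc F$. It remains to check that no new closed orbits appear. An orbit of the plugged flow that is not trapped agrees with an orbit of $\phi$ outside the plugs, by (P3). So a closed orbit of the plugged flow would come from a chain-recurrent orbit of $\phi$. Either it passes through some $C_U$ and is trapped, or it enters a plug's trapped set and limits onto the Denjoy-type minimal set, which contains no periodic orbits.

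The main obstacle is the trapping step. We must show that the one-dimensional set $\Lambda$ can be funneled into one disk per plug, or else that finitely many disjoint plugs suffice. We also have to rule out closed orbits formed by pieces of plug orbits joined to pieces of $\Lambda$ that the plugs create. To handle this, we would first try covering $\Lambda\cap L_U$ by finitely many disks and connecting them by arcs into a single disk, which is possible since the set is totally disconnected. Then we would apply the funneling isotopy $H$ from the proof of Lemma \ref{lem:pluginsertion again}, using compactness of $\Lambda$.
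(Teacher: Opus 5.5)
Your outline follows the paper's proof. The forward direction goes through Theorem \ref{thm:locallytopdepthclosingagain} and Lemma \ref{lem:approxedimpliesforcing}. The converse uses Proposition \ref{prop:junctures capstone again}, a Smale perturbation of a flow that is a dynamical product on product regions, one modified Schweitzer plug per base region, and (P3) to rule out new closed orbits. You explicitly exclude Reeb components and toral leaves, which Lemma \ref{lem:connectingholonomyfinite} needs; the paper leaves this implicit. Your remark that the approximants can be taken finite depth ``as they will be in the construction below'' is circular, because in that direction the approximants are given. The paper applies Theorem \ref{thm:locallytopdepthclosingagain} to arbitrary approximants in the same way, so this is not a divergence from its argument.

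The step that fails as written is the claim that $\Lambda\cap L_U$ is compact. The leaf $L_U$ is noncompact and accumulates on lower-depth leaves. A closed orbit crossing a leaf onto which $L_U$ spirals therefore meets $L_U$ in infinitely many points that leave every compact subset of $L_U$. So $\Lambda\cap L_U$ is closed in $L_U$ but generally not compact. You can neither cover it by finitely many disks nor place it in a single disk, and this, not the funneling, is the real content of the obstacle you flag.

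The missing step, which is how the paper proceeds, is to trap only the closed orbits and to work on a compact piece of the leaf:
\begin{itemize}
\item Using compactness of $M$, choose a compact subsurface $S_U\subset L_U$ such that every closed orbit of $\phi$ meets some $S_U$, with $\partial S_U$ disjoint from the closure $C$ of the union of the closed orbits.
\item Then $C\cap S_U$ is compact and totally disconnected, so it lies in the interior of a closed disk $D_U\subset S_U$.
\item The neighborhood in Lemma \ref{lem:pluginsertion again} (or Lemma \ref{lem:annularpluginsertion}) is arbitrary, so that lemma as stated already traps every orbit through $D_U$.
\end{itemize}
No further perturbation or separate funneling argument is needed. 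With this replacement your argument coincides with the paper's.
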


\begin{proof}
    Suppose $\mc F$ is finite depth and $C^2$. Let $U_i$ for $i=1,\ldots,n$ be the base regions of $\mc F$. If any $U_i$ has connecting (limit) junctures, then by Proposition \ref{prop:junctures capstone}, $\mc F$ can be approximated by fibrations $\{\mc F_j\}_{j\in\N}$ in $U_i$. By Lemma \ref{lem:base region structure}, $U_i$ is not a product region. Thus, Theorem \ref{thm:locallytopdepthclosing} implies each $\mc F_j$ forces closed orbits. Finally, Lemma \ref{lem:approxedimpliesforcing} implies $\mc F$ forces closed orbits in $U_i$, as it is approximated there by foliations that force closed orbits. 
    
    Suppose instead that each $U_i$ has junctures which don't connect. Then there is a leaf $L_i$ in each $U_i$ which either has a nonseparating curve $\gamma_i$ of attracting holonomy, or cobounds a topological product with a leaf $L_i'$ and has an annulus of attracting holonomy which restricts to a nonseparating curve in $L_i$. Let $\phi_0$ be  a smooth flow which defines a dynamical product on every topological product region in $\mc F$. For each $i$, any closed orbit of $\phi_0$ intersecting $U_i$ must intersect every leaf of $U_i$. This is because $\phi_0$ defines a suspension flow on $U_i$ with fiber any relative depth zero leaf, and all higher depth leaves accumulate on these leaves. Note also that every closed orbit of $\phi_0$ must intersect some $U_i$, as open sets in the complement of the base regions are all contained in product regions. From this we see that every closed orbit of $\phi_0$ intersects some $L_i$. Then by Lemma \ref{lem:smaleisdense}, we can $C^0$-approximate $\phi_0$ by a Smale flow $\phi$ which must also have the property that every closed orbit intersects some $L_i$.
    
    Let $C$ be the closure of the set of closed orbits of $\phi$. Then the set $C$ intersects each $L_i$ in a union of totally disconnected sets. Since $M$ is compact, each $L_i$ has a compact subsurface $S_i$ such that every closed orbit of $\phi$ intersects some $S_i$. We can choose $S_i$ so that its boundary is disjoint from $C$, and therefore we can find a closed disk $D_i$ in each $S_i$ that contains $C \cap S_i$ in its interior. In particular, every closed orbit of $\phi$ intersects some $D_i$.

    If $L_i$ has a curve of attracting holonomy, then by Lemma \ref{lem:pluginsertion} we may insert a plug $P_i$ in a small neighborhood of $L_i$ so that $D_i$ is trapped by the plug. Choose the neighborhood small enough so that the image of $P_i$ is contained in the base region $U_i$.

    If $L_i$ cobounds a topological product region with $L_i'$ and has an annulus of attracting holonomy. Then since $\phi$ defines a dynamical product on every product region, we may similarly use Lemma \ref{lem:annularpluginsertion} to insert a plug $P_i$ trapped all orbits intersecting $D_i$.

    Let $\phi'$ be the plugged flow after inserting the plugs $P_i$ for all $i$. These plug insertions are disjoint, so no new closed orbits are created with each plug insertion. Every orbit in $C$ is trapped by some $P_i$. Therefore, $\phi'$ is an aperiodic flow transverse to $\mc F$. Since we have inserted $C^{1}$ plugs into a $C^\infty$ flow, $\phi'$ is generated by a $C^{1}$ vector field, as desired. This concludes the proof.
\end{proof}

\begin{corollary}\label{cor:blowdown again}
	Suppose $\mc F$ is finite depth. If $\mc F$ does not force closed orbits, then it is obtained by iterated shearing of a foliation that does.
\end{corollary}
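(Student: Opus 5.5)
The plan is to undo the shearing region by region, passing from $\mc F$ to the foliation $\check{\mc F}$ of Section \ref{subsec:base regions}, and then to show that $\check{\mc F}$ forces closed orbits. By Lemma \ref{lem:base region structure}(2), $\mc F$ is obtained from $\check{\mc F}$ by iterated shearing in product regions. So it suffices to show that $\check{\mc F}$ itself forces closed orbits. (The statement is only meaningful when $M$ admits a finite depth foliation with some base region, which is automatic since every finite depth foliation has a locally top-depth component.)

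The key point is the structure of $\check{\mc F}$. By construction every locally top-depth product region of $\check{\mc F}$ is a trivially foliated product. The process always terminates, since each replacement strictly lowers the depth of some leaves and depth is bounded. Hence the locally top-depth components of $\check{\mc F}$ are exactly the base regions of $\mc F$. These exist: a maximal connected set of maximum depth leaves of $\check{\mc F}$ is a locally top-depth component. If $\check{\mc F}$ were entirely a trivially foliated product with no locally top-depth component at all, then all leaves would be compact and parallel, and $M$ would fiber. That case is handled separately below.

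Now take a base region $U$. By Lemma \ref{lem:base region structure}(1), $U$ is not a product region, and it is a locally top-depth component of $\check{\mc F}$. Theorem \ref{thm:locallytopdepthclosing} applied to $\check{\mc F}$ then shows that every flow transverse to $\check{\mc F}$ has a closed orbit, so $\check{\mc F}$ forces closed orbits. Notably, this step uses no $C^2$ hypothesis, which matches the fact that the corollary is stated without one.

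The main obstacle is confirming that $\check{\mc F}$ is well defined and still a finite depth foliation on $M$ when there are infinitely many locally top-depth product regions. The issue is that replacing infinitely many regions at once must produce a genuine foliation. The plan is to replace each product region $V$ by the trivial product foliation on $\wh V \cong L\times[0,1]$, keeping the boundary leaves fixed. Doing this along the flowlines of a transverse flow defining dynamical products preserves continuity of the plane field, since the regions shrink in the flow direction. The depth remains bounded by the original depth, so $\check{\mc F}$ is finite depth.

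The degenerate case is when every leaf of $\check{\mc F}$ is compact, so that $\check{\mc F}$ is a fibration by closed surfaces. Here the fiber is not a torus since $M$ is atoroidal. A transverse flow is then a suspension flow of a homeomorphism of a hyperbolic surface, which has a periodic point by the Lefschetz/Nielsen–Thurston argument cited in the introduction. So $\check{\mc F}$ forces closed orbits in this case as well.
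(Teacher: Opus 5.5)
Your proposal is correct and follows the paper's proof: pass to $\check{\mc F}$, use Lemma \ref{lem:base region structure} to see that $\mc F$ is recovered from $\check{\mc F}$ by iterated shearing and that no locally top-depth component of $\check{\mc F}$ is a product region, and then apply Theorem \ref{thm:locallytopdepthclosing}. Your comments on why $\check{\mc F}$ is well defined, and your separate treatment of the case where $\check{\mc F}$ is a fibration by closed surfaces, are extra care that the paper leaves implicit rather than a different route. The fibration case is, under Definition \ref{def:ltdregion}, vacuously its own locally top-depth component, but it is not literally reached by the endperiodic argument, so your appeal to Thurston's theorem there is appropriate.
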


\begin{proof}
	Suppose $\mc F$ is finite depth. Let $\check{\mc F}$ be the foliation defined in Section \ref{subsec:base regions}, i.e. the foliation obtained from $\mc F$ by iteratively replacing locally top-depth components that are topological products with trivially foliated products. Then by Lemma \ref{lem:base region structure}, every locally top-depth component of $\check{\mc F}$ is not a topological product, and so $\check{\mc F}$ forces closed orbits by Theorem \ref{thm:locallytopdepthclosing}. By Lemma \ref{lem:base region structure}, $\mc F$ is obtained from $\check{\mc F}$ by iterated shearing, and this completes the proof.
\end{proof}

\bibliographystyle{alpha}
\bibliography{bibliography_cited_only}

\end{document}